\newcommand{\pa}{\partial}
\newcommand{\lt}{\left}
\newcommand{\rt}{\right}
\newcommand{\R}{\mathbb{R}}
\newtheorem{theorem}{Theorem}[section]
\theoremstyle{definition}
\newtheorem{definition}[theorem]{Definition}
\theoremstyle{remark}
\newtheorem{remark}[theorem]{Remark}
\theoremstyle{corollary}
\newtheorem{corollary}[theorem]{Corollary}
\begin{document}

\title[  ]{A review on attractive-repulsive hydrodynamics for consensus in collective behavior}

\author{Jos\'e A. Carrillo}
\address[Jos\'e A. Carrillo]{Department of Mathematics, Imperial College London, SW7 2AZ, UK}
\curraddr{}
\email{carrillo@imperial.ac.uk}
\thanks{}

\author{Young-Pil Choi}
\address[Young-Pil Choi]{Fakult\"at f\"ur Mathematik, Technische Universit\"at M\"unchen, Boltzmannstra{\ss}e 3, 85748, Garching bei M\"unchen, Germany}
\curraddr{}
\email{ychoi@ma.tum.de}
\thanks{}

\author{Sergio P\'erez}
\address[Sergio P\'erez]{ETSIAE, Technical University of Madrid, Pza. de Cardenal Cisneros, 3, 28040, Madrid, Spain}
\curraddr{}
\email{sergio.perez.perez@alumnos.upm.es}
\thanks{}

%
%

\begin{abstract}
This survey summarizes and illustrates the main qualitative properties of hydrodynamics models for collective behavior. These models include a velocity consensus term together with attractive-repulsive potentials leading to non-trivial flock profiles. The connection between the underlying particle systems to the swarming hydrodynamic equations is performed through kinetic theory modelling arguments. We focus on Lagrangian schemes for the hydrodynamic systems showing the different qualitative behavior of the systems and its capability of keeping properties of the original particle models. We illustrate known results concerning large time profiles and blow-up in finite time of the hydrodynamic systems to validate the numerical scheme. We finally explore unknown situations making use of the numerical scheme showcasing a number of conjectures based on the numerical results.
\end{abstract}

\maketitle


\section{Introduction}\label{sec:1}
Modelling the collective behavior of a large number of interacting individuals is a very challenging problem in animal behavior, pedestrian flow, cell adhesion and chemotaxis problems, and many other biological applications, see for instance \cite{CK,PEK,CDFSTB,BD,BMP,PBSG} and the literature therein. Most of the literature is based on Individual Based Models (IBMs) which are particle descriptions from a kinetic modelling perspective. These particle systems typically include three basic effects: attraction, repulsion and alignment or re-orientation of the individuals, called the first principles of swarming. The way in which these three effects are taken into account has given rise to a large number of different and interesting models for collective behavior. These basic 3-zone models were introduced by theoretical biologists \cite{Aoki,HW} for fisheries control as well as computer scientists \cite{Reyn} in order to mimick animal behavior in animation movies. These models have evolved toward more complete descriptions involving particular species interactions and adapted to particular animals such as birds \cite{HCH}, fish \cite{HH,KTIHC,BTTYB}, ducks \cite{LLE,LLE2}, and insects \cite{BT} for instance.

These basic particle descriptions can be coarsened to macroscopic descriptions when the number of individuals is large leading to nonlocal macroscopic models both at the level of the mass density \cite{MEK,MEBS} or hydrodynamic descriptions \cite{CDMBC,CDP}. This connection to continuum models is better done by passing to the intermediate description provided by kinetic modelling. The kinetic theory approach via mean-field limits of interacting particle systems has offered a mathematical underpinning to derive kinetic equations in a rigorous manner from particle descriptions. The connection towards macroscopic equations is done either via closure assumptions or moment approximations \cite{CDMBC,CDP} and large friction limits \cite{LT}. One of the most famous particle models was introduced by Vicsek and his collaborators \cite{VCBCS} showing a phase transition behavior that has also been studied through kinetic modelling and self-organized hydrodynamics \cite{dm2008,dfl2013,dfl2015}.
We refer to \cite{CFTV,KCBFL} and the references therein for a good account of the different levels of description and the state of the art of these models in the applied mathematical community.

We will focus in this review on two velocity consensus models \cite{CS0,CS,MT,MT2} that lead to asymptotic convergence for the large time toward a fixed velocity under certain conditions, a phenomenon that is called asymptotic flocking. These models have been studied extensively in the last years due to their apparent simplicity in formulating the possibility of consensus in velocity. These models are connected to the Vicsek model in which all particles travel to a fixed speed by large friction limits \cite{BC}. They also present a phase transition in terms of noise as the original Vicsek model \cite{BCCD}. In this survey, we concentrate in the basic properties of the hydrodynamic models incoporating also the effects of attraction and repulsion through an interaction potential. 

In Section 2, we give a brief account of the particle descriptions making particular emphasis to the consensus in velocity models with interaction potentials and their flock solutions. Section 3 is first devoted to explain the link between these particle models and hydrodynamic descriptions via kinetic modelling. We propose a Lagrangian approach to solve the one dimensional hydrodynamic descriptions. We numerically explore different qualitative aspects of the hydrodynamic models such as critical thresholds \cite{CCTT} and their sharpness for consensus models with and without interaction potentials. We also analyse the effect of the singularity of the potential in the long time asymptotics of global solutions. 
%
%
%
%
%
%

\section{Microscopic descriptions: Discrete models}
In this section, we review some of the basic individual based attractive-repulsive models containing an additional velocity alignment force. The social interaction between individuals of the swarm is modelled by an effective interaction potential encapsulating the short-range repulsion and the long-range attraction forces at the particle level as discussed in the introduction. On top, we will also consider cases in which there is a tendency of behaving similarly to other individuals of the group, this mimicking behavior can be modelled in many different ways. One of the simplest manners of incorporating this gregarious behavior is to assume that each individual averages its relative velocity vector with nearby individuals according to some weights that we call the communication function. All the modelling in these simple descriptions are reduced to find biologically reasonable potentials and communication functions for the particular application or adapted to a particular species. Many authors have studied what are the most probable interaction regions for different animals, see \cite{HH,LLE2,KTIHC} and the references therein. We will showcase some of the different behaviors in these models by choosing toy-example for potentials and communication functions. Although not too biologically reasonable, these choices give us generic behaviors for these models.

For the velocity alignment force, we use two different types of forces proposed by Cucker and Smale \cite{CS0,CS} and Motsch and Tadmor \cite{MT}. More precisely, let $(x_i,v_i)$ be the position and velocity of $i$-th individual. Then our main system reads as
\begin{equation}\label{eq_par}
\begin{cases}
{\displaystyle \frac{dx_{i}}{dt}=v_{i},\quad i = 1,\cdots,N,\quad t>0,}\\[2mm]
{\displaystyle \frac{dv_{i}}{dt}=\frac{1}{S_i(x)}\sum_{j=1}^{N}\psi\left(x_{i}-x_{j}\right)\left(v_{j}-v_{i}\right) - \frac1N\sum_{j \neq i} \nabla K(x_i - x_j).}
\end{cases}
\end{equation}
The first term on the right hand side of $\eqref{eq_par}_2$ represents a nonlocal velocity alignment force, where $\psi$ is the communication function. The second term on the right hand side of $\eqref{eq_par}_2$ serves as attractive/repulsive forces through the interaction potential $K(x)$. Typical assumptions on $K(x)$ are that is radially symmetric and smooth outside the origin possibly decaying to zero for large distances, one particular example widely used in the literature is the Morse potential, see \cite{DCBC,CMP,ABCV,CHM1} for more details. Here the scaling function $S_i(t)$ and the communication function $\psi$ are given by
\begin{equation}\label{def_s}
S_i(x) := \left\{ \begin{array}{ll}
N & \mbox{ for the Cucker-Smale model},\\[1mm]
\displaystyle \sum_{k=1}^N \psi(x_i - x_k) & \mbox{ for the Motsch-Tadmor model},
\end{array}\right. 
\end{equation}
and
\[
\psi(x)=\frac{1}{(1+|x|^{2})^{\beta/2}},\quad\beta\geq 0,
\]
respectively. These scalings are related to the mean-field limit for the system of the N interacting particles. Assuming that the effect of each individual on another one via the social force decays as $1/N$ is intuitive, if we want to obtain some non trivial limit as $N\to \infty$, since we should keep the total kinetic and potential energy and velocity of each individual to be of order 1 in that limit. More discussions about the mean-field limit can be found in \cite{BH,Dob,Spo,Szn,Gol,Hau,CCR,CCR2,BCC,CFTV,AIR,BV,FHM,CCH,CCH2}.

In \cite{CS}, Cucker and Smale introduced the Newton-type particle system \eqref{eq_par} for flocking phenomena. The local averaging of relative velocities is weighted by the communication function $\psi$ in such a way that closer individuals have stronger influence than further ones. Note that the velocity alignment force of the Cucker-Smale (in short CS) model is scaled with the total mass. Later, Motsch and Tadmor proposed in \cite{MT} a new model for self-organized dynamics. They pointed out that the CS model is inadequate for far-from-equilibrium scenarios since the communication function is normalized by the total number of agents $N$.  By taking into account the velocity-alignment force normalized with a local average density, the Motsch-Tadmor (in short MT) model takes into account not only the relative distance between agents but also their relative weights compared to the CS model. Note that the MT model does not have the symmetry property due to the normalization.

%
%
%
%
%
%

\subsection{Velocity-alignment models without interaction forces}\label{sec_p1}

We begin our discussion with the case in which the individuals are only interacting through the velocity alignment force as
\begin{equation}
\begin{cases}
{\displaystyle \frac{dx_{i}}{dt}=v_{i},\quad i = 1,\cdots,N,\quad t>0,}\\[2mm]
{\displaystyle \frac{dv_{i}}{dt}=\frac{1}{S_i(x)}\sum_{j=1}^{N}\psi\left(x_{i}-x_{j}\right)\left(v_{j}-v_{i}\right),\quad\psi(x)=\frac{1}{(1+|x|^{2})^{\beta/2}},\quad\beta\geq0,}
\end{cases}\label{eq_cs}
\end{equation}
with the initial data 
\begin{equation}
(x_{i}(0),v_{i}(0))=:(x_{i0},v_{i0}),\quad i = 1,\cdots,N.\label{ini_eq_cs}
\end{equation}
Here the scaling function $S_i(x)$ is given in \eqref{def_s}. We notice that the standard Cauchy-Lipschitz theory yields the existence and uniqueness of global in time smooth solutions to the system \eqref{eq_cs} with $S_i(x) \equiv N$ since the communication function $\psi$ is bounded and globally Lipschitz. For the MT model, we can also show that the communication function $\psi$ is bounded from below for any time $T < \infty$, and this again enables us to apply the Cauchy-Lipschitz theory to the MT model to have the existence and uniqueness of solutions. Let us first remind the main analytical results concerning the flocking behavior for the system \eqref{eq_cs}. Then, we present several numerical results to illustrate the analytical ones. We also compare the time behavior of solutions to the CS and MT systems, i.e., \eqref{eq_cs} with $S_i(x) \equiv N$ and $S_i(x) = \sum_{k=1}^N \psi(x_i - x_k)$.

For the large-time behavior of solutions, we first introduce the definition of universal asymptotic flocking for the system \eqref{eq_cs}.

\begin{definition}
Let $(x_i,v_i)_{i=1}^N$ be a given solution of the particle system \eqref{eq_cs}-\eqref{ini_eq_cs}. Then the $(x_i,v_i)_{i=1}^N$ leads to asymptotic flocking if and only if it satisfies the following two conditions:
\[
\lim_{t \to \infty}\max_{1 \leq i,j \leq N}|v_i(t) - v_j(t)| = 0 \quad \mbox{and} \quad \sup_{0 \leq t < \infty}|x_i(t) - x_j(t)| < \infty.
\]
\end{definition}
We then define diameters in position and velocity phase spaces as follows: 
\begin{equation}
R^{x}(t):= \max_{1\leq i,j\leq N}\left|x_{i}(t)-x_{j}(t)\right|,\quad R^{v}(t):=\max_{1\leq i,j\leq N}\left|v_{i}(t)-v_{j}(t)\right|.\label{def:xv}
\end{equation}

For the system \eqref{eq_cs}, rigorous estimates showing the emergence of flocking depending on the decay rate of the communication function are provided in \cite{CS0,CS}. Later, the flocking estimates are refined in \cite{CFRT, HL, HT, MT, Tan}. Flocking models with vision cones or topological interactions are studied in \cite{AP, CCHS, Hasko} and with noise \cite{Choi, DFT}.

In the theorem below, we sumarize the flocking estimates for the system \eqref{eq_cs}. The proof follows the blueprint of \cite{CFRT, ACHL, HL, MT, Tan}, so we omit it here.

\begin{theorem}\label{thm_fl}Let $(x,v)$ be any global smooth solution to the CS system \eqref{eq_cs}-\eqref{ini_eq_cs}.
\begin{itemize}
\item If $0\leq\beta\leq1$, then we have unconditional asymptotic flocking, that is
\begin{equation}
R^{v}(0)e^{-t}\leq R^{v}(t)\leq R^{v}(0)e^{-\psi(\tilde{R})t}\quad t\geq0,
\label{new}
\end{equation}
where $\tilde{R}$ is implicitly given by 
\begin{equation*}
R^{v}(0)=\int_{R^{x}(0)}^{\tilde{R}}\varphi(s)\,ds.
\end{equation*}

\item If $\beta>1$ and the initial diameters $R^{x}(0)$ and $R^{v}(0)$
satisfy 
\begin{equation}
R^{v}(0)<\int_{R^{x}(0)}^{\infty}\psi(s)\,ds,\label{a:gene-1}
\end{equation}
then estimate \eqref{new} also holds. 
\end{itemize}
\end{theorem}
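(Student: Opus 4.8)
The plan is to treat the Cucker--Smale case $S_i(x)\equiv N$, for which the mean velocity $\bar v:=\frac1N\sum_{k}v_k$ is conserved, and to reduce everything to two scalar differential inequalities for the diameters $R^x$ and $R^v$ defined in \eqref{def:xv}. Since $R^v$ is a maximum of the smooth functions $|v_i-v_j|$, it is only Lipschitz in time, so the inequalities below are read in the sense of the upper Dini derivative, evaluated at an index pair $(i,j)$ realizing the maximum at the given time. Writing $d:=(v_i-v_j)/|v_i-v_j|$ for that pair, the first step is the elementary but crucial projection lemma: maximality of $|v_i-v_j|$ forces $d\cdot v_j\le d\cdot v_k\le d\cdot v_i$ for every $k$, i.e. $v_i$ and $v_j$ are the extreme velocities in the direction $d$. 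Indeed, if some $d\cdot v_k$ exceeded $d\cdot v_i$, then $|v_k-v_j|\ge d\cdot(v_k-v_j)>|v_i-v_j|$, contradicting maximality; the lower bound is symmetric.

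With this lemma I would establish the decay estimate
\[
\frac{d}{dt}R^v\le-\psi(R^x)\,R^v.
\]
Computing $\frac12\frac{d}{dt}|v_i-v_j|^2=(v_i-v_j)\cdot(\dot v_i-\dot v_j)$ and inserting $\eqref{eq_cs}_2$ with $S_i\equiv N$ gives two sums whose summands $(v_i-v_j)\cdot(v_k-v_i)$ and $(v_i-v_j)\cdot(v_k-v_j)$ are, by the projection lemma, respectively nonpositive and nonnegative. Using that $\psi$ is nonincreasing and $|x_i-x_k|,|x_j-x_k|\le R^x$, so that $\psi(x_i-x_k),\psi(x_j-x_k)\ge\psi(R^x)$, each weight can be replaced by $\psi(R^x)$ at the cost of an inequality of the right sign; the two sums then telescope through $\bar v$ and collapse to $-\psi(R^x)|v_i-v_j|^2$, yielding the displayed bound after dividing by $R^v$. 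The matching lower bound $\frac{d}{dt}R^v\ge-R^v$, which produces the left inequality in \eqref{new}, comes from the same splitting, now using only $0\le\psi\le1$: one checks $|d\cdot\dot v_i|\le d\cdot(v_i-\bar v)$ and $|d\cdot\dot v_j|\le d\cdot(\bar v-v_j)$, and these two projections add up to exactly $d\cdot(v_i-v_j)=R^v$. Finally $\frac{d}{dt}R^x\le R^v$ follows at once from $\frac12\frac{d}{dt}|x_i-x_j|^2=(x_i-x_j)\cdot(v_i-v_j)\le R^xR^v$ at an extremal position pair.

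The remaining step is to convert these inequalities into a uniform bound on $R^x$, which is where the dichotomy in $\beta$ enters. I would introduce the Lyapunov functional
\[
\mathcal E(t):=R^v(t)+\int_{R^x(0)}^{R^x(t)}\psi(s)\,ds,
\]
and combine $\frac{d}{dt}R^v\le-\psi(R^x)R^v$ with $\frac{d}{dt}R^x\le R^v$ and $\psi\ge0$ to obtain $\frac{d}{dt}\mathcal E\le0$, hence $\int_{R^x(0)}^{R^x(t)}\psi(s)\,ds\le R^v(0)$ for all $t$. Since $\int_0^\infty\psi(s)\,ds=\int_0^\infty(1+s^2)^{-\beta/2}\,ds$ diverges precisely when $0\le\beta\le1$ and converges when $\beta>1$, in the first regime the bound forces $R^x(t)\le\tilde R$ for every $t$, unconditionally, with $\tilde R$ the value defined in the statement; in the second regime the same conclusion holds provided $\tilde R$ is finite, which is exactly the smallness condition \eqref{a:gene-1}. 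A short continuity/bootstrap argument rules out $R^x$ ever reaching $\tilde R$. Once $R^x(t)\le\tilde R$ is known, monotonicity of $\psi$ gives $\psi(R^x(t))\ge\psi(\tilde R)$, so the decay estimate upgrades to $\frac{d}{dt}R^v\le-\psi(\tilde R)R^v$ and Gr\"onwall delivers the right inequality $R^v(t)\le R^v(0)e^{-\psi(\tilde R)t}$ in \eqref{new}; together with $\sup_tR^x(t)\le\tilde R<\infty$ this is asymptotic flocking.

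The main obstacle is the nonsmoothness of $t\mapsto R^v(t),R^x(t)$: the differential inequalities must be justified for maxima of smooth functions, which I would do by working with the upper Dini derivative and invoking the standard fact that it is controlled by the derivative of whichever smooth branch attains the maximum. The only genuinely model-specific ingredient is the telescoping to $\bar v$, which relies on the symmetric $1/N$ normalization of the Cucker--Smale force; this is why the theorem is stated for the CS system, the MT normalization destroying the clean cancellation.
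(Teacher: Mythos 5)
Your proof is correct, and it is essentially the proof the paper has in mind: the paper omits the argument for Theorem \ref{thm_fl}, deferring to the blueprint of \cite{CFRT, ACHL, HL, MT, Tan}, and your reconstruction --- the projection lemma at an extremal velocity pair, the two dissipative differential inequalities $\frac{d}{dt}R^{v}\leq-\psi(R^{x})R^{v}$ and $\frac{d}{dt}R^{x}\leq R^{v}$ read in the Dini sense, the lower bound $\frac{d}{dt}R^{v}\geq -R^{v}$ from $0\leq\psi\leq1$, and the Lyapunov functional $R^{v}(t)+\int_{R^{x}(0)}^{R^{x}(t)}\psi(s)\,ds$ yielding $R^{x}(t)\leq\tilde{R}$ with the $\beta$-dichotomy governed by divergence or convergence of $\int^{\infty}\psi(s)\,ds$ --- is exactly the Ha--Liu-type argument of those references. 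One cosmetic remark: no continuity/bootstrap step is needed to keep $R^{x}$ below $\tilde{R}$, since $\psi>0$ makes $R\mapsto\int_{R^{x}(0)}^{R}\psi(s)\,ds$ strictly increasing, so the bound $\int_{R^{x}(0)}^{R^{x}(t)}\psi(s)\,ds\leq R^{v}(0)$ already forces $R^{x}(t)\leq\tilde{R}$ directly.
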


In Figs. \ref{fig:sim-beta08} and \ref{fig:beta>1}, we observe typical particle simulations of the CS model in 2D. As stated in Theorem \ref{thm_fl}, the unconditional asymptotic flocking occurs for any initial data in the case simulated in Fig. \ref{fig:sim-beta08} while Fig. \ref{fig:beta>1} shows a comparison between flocking and non flocking cases. 
\begin{figure}[ht!]
\subfloat[$Time=0$]{\protect\protect\includegraphics[scale=0.32]{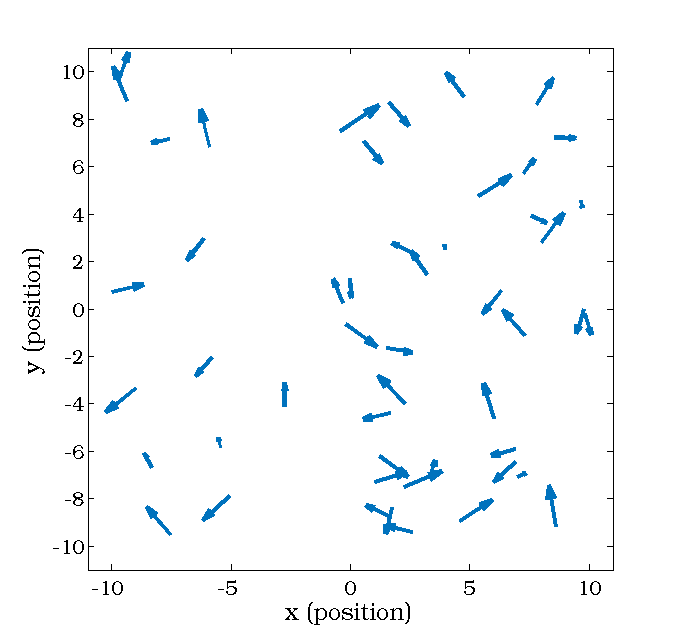}
}\subfloat[$Time=250$]{\protect\protect\includegraphics[scale=0.32]{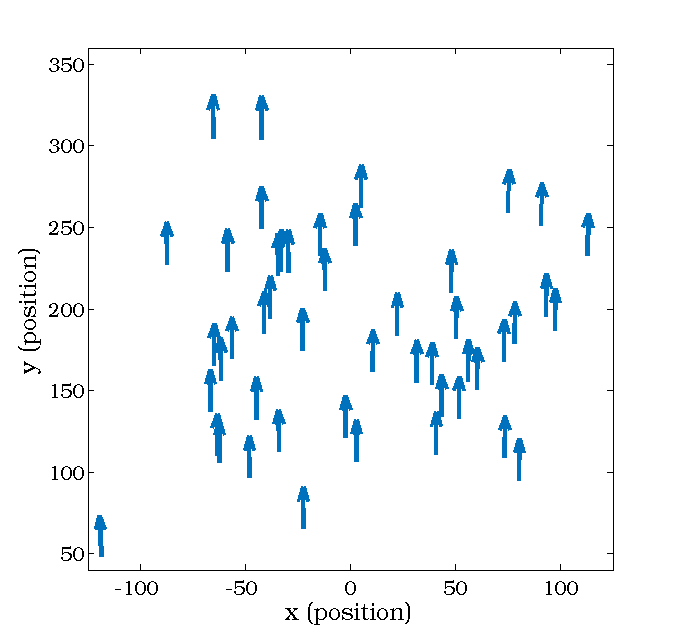}
}
\protect\protect\caption{\label{fig:sim-beta08}Large-time behavior of solutions to the CS model with $\beta=0.8$.
}
\end{figure}
In Fig. \ref{fig:sim-beta08}, the initial positions and velocities $\{(x_{i0},v_{i0})\}_{i=1}^N$ with $N=50$ are generated randomly from the uniform distribution $[-10, 10]^2 \times \big\{ [-5,5]\times [-4.3,5.7] \big\}$ with the aim of having the mean velocity equal to $\frac{1}{N} \sum_{i=1}^N v_i(0) \approx (0,0.7)$. 

In Fig. \ref{fig:beta>1}, the flocking behavior happens depending on the initial data. The initial positions and velocities $\{(x_{i0},v_{i0})\}_{i=1}^N$ with $N=50$ are generated randomly from the uniform distribution $[-10, 10]^2 \times \big\{ [-5,5]\times [-4.3,5.7] \big\}$ with the aim of having the mean velocity equal to $\frac{1}{N} \sum_{i=1}^N v_i(0) \approx (0,0.7)$. With this initial configuration, it results that $R^x(0)=26.23$ and $R^v(0)=12.25$. Then, the initial data for (A) satisfies \eqref{a:gene-1} since $R^{v}(0)<\int_{R^{x}(0)}^{\infty}\psi(s)\,ds=16.43$. On the other hand, the initial data for (B) do not satisfy the condition \eqref{a:gene-1} because $R^{v}(0)>\int_{R^{x}(0)}^{\infty}\psi(s)\,ds=2.60$.
\begin{figure}[ht]
\subfloat[$Time=250000$]{\protect\protect\includegraphics[scale=0.32]{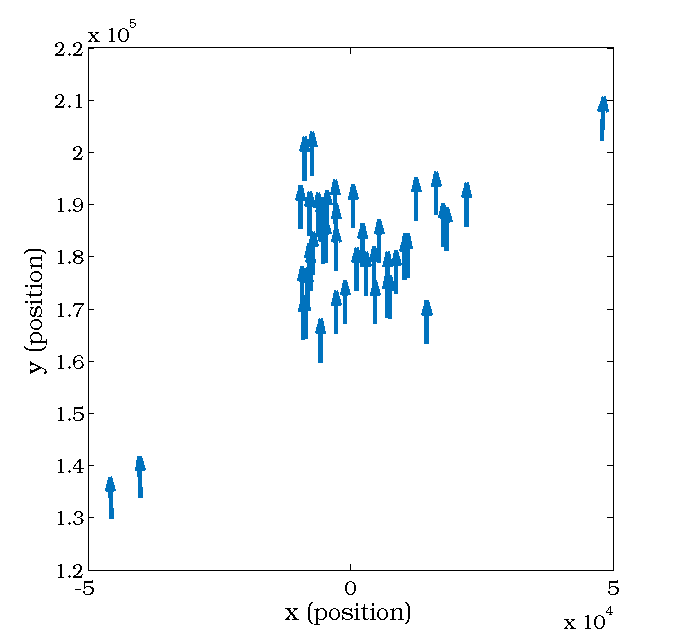}
}\subfloat[$Time=250000$]{\protect\protect\includegraphics[scale=0.32]{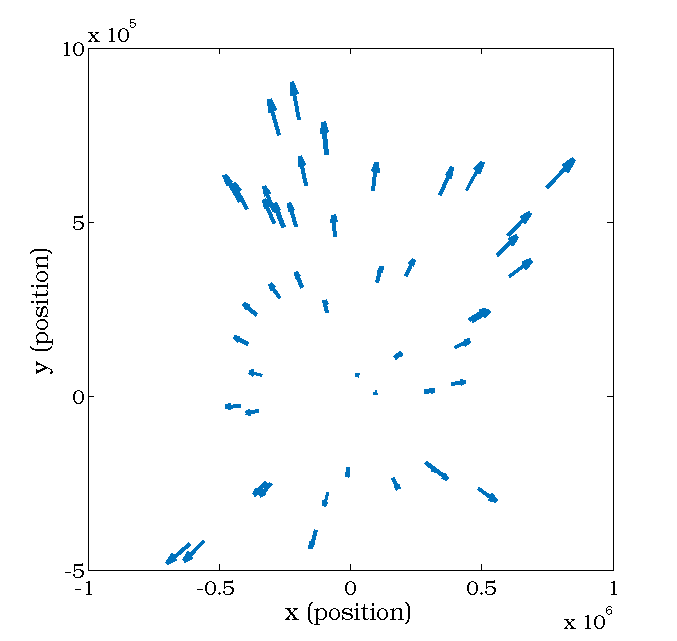}
}
\protect\protect\caption{\label{fig:beta>1}Large-time behavior of solutions to the CS model with $\beta=1.05$ (A) and $\beta=1.2$ (B).}
\end{figure}
\begin{remark} For the CS model, i.e., $S_i(x) \equiv N$ in \eqref{eq_cs}, 
if we set an averaged quantity $v_c(t):= \frac1N \sum_{i=1}^N v_i(t)$, then $v_c(t)$ satisfies $v_c^\prime(t) = 0$, i.e., $v_c(t) = v_c(0)$ due to the symmetry of the communication function $\psi$. Thus, if the global flocking occurs, then we have that for all $i \in \{1, \cdots, N\}$
\[
v_i(t) \to v_c(0) = \frac1N\sum_{i=1}^Nv_i(0) \quad \mbox{as} \quad t \to \infty.
\]
On the other hand, in the MT model, i.e., $S_i(x) = \sum_{k=1}^N \psi(x_i - x_k)$, the momentum is not conserved. Thus identifying the asymptotic flocking state in terms of the initial data  is a very intriguing question. Partial answers to asymptotic flocking have been provided in \cite{MT}.
\end{remark}

In Fig. \ref{fig:Initial-conditions-mt}, we show the different behavior of the CS and MT velocity averaging. We choose the initial positions $\{x_{i0}\}_{i=1}^{55}$ divided into two groups, $G_1 := \{x_{i0}\}_{i=1}^{50}$ and $G_2 := \{x_{i0}\}_{i=1}^5$, they are generated randomly from the uniform distribution $[-10,10]^2$ and $[60,63] \times [-1.5,1.5]$, respectively. The initial velocities $\{v_{i0}\}_{i=1}^{55}$ are generated randomly from the uniform distribution $[-5,5]\times [-4.3,5.7] $ with the aim of having the mean velocity equal to $\frac{1}{N} \sum_{i=1}^N v_i(0) \approx (0,0.7)$. We observe the much faster decay of the velocity radius of the support $R^v(t)$ defined in \eqref{def:xv} in the MT model compared to the CS model, and thus the asymptotic flocking is achieved faster in the MT model than in the CS model.
\begin{figure}[ht!]
\subfloat[]{\protect\protect\includegraphics[scale=0.32]{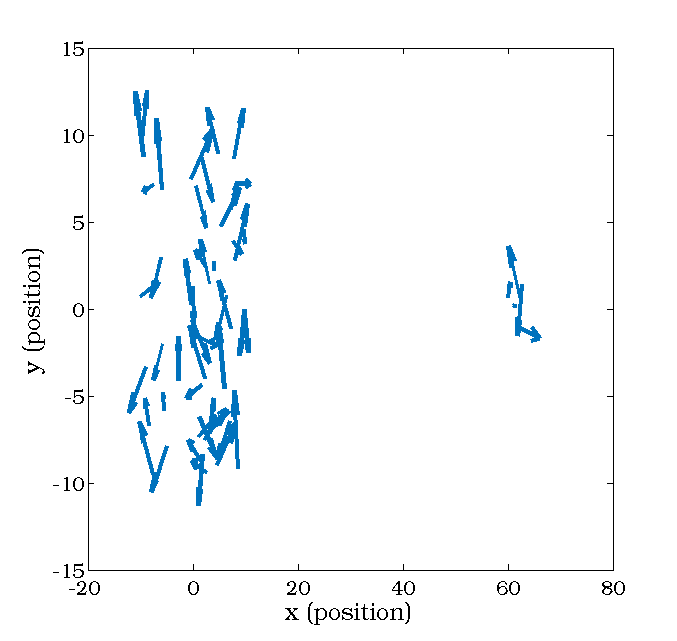}
}\subfloat[]{\protect\protect\includegraphics[scale=0.32]{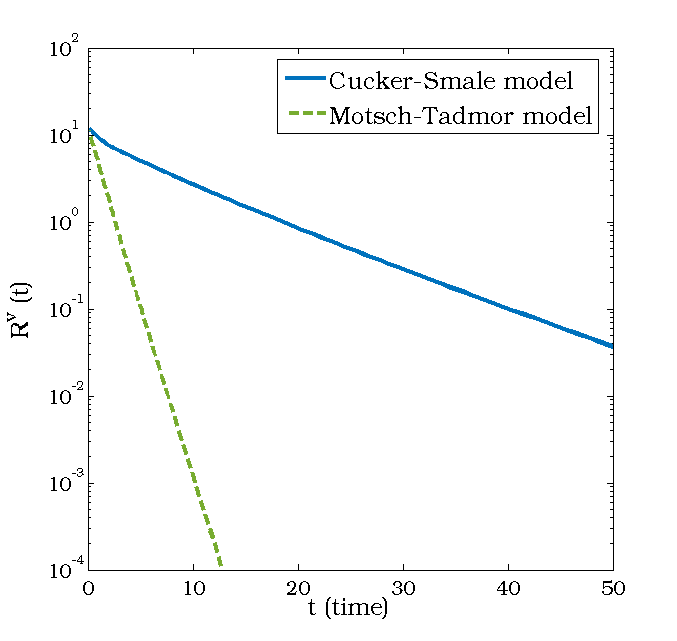}
}
\protect\protect\caption{\label{fig:Initial-conditions-mt} (a): The initial positions are divided into two groups. (b): Comparison of the log scale decay rate of $R^v(t)$ for both systems.
}
\end{figure}

Finally, we show in Fig. \ref{fig:Evolution-mt-5s} a comparison between the time evolution of the system with $S_i(x) \equiv N$ and $S_i(x) = \sum_{k=1}^N \psi(x_i - x_k)$. Subplots (a) and (b) show a snapshot of the solutions at $t=5$, while subplots (c) and (d) show a snapshot of the solutions at $t=50$, for the CS and the MT models respectively. From Fig. \ref{fig:Evolution-mt-5s} (a), we find that the CS flocking particles in the small group $G_2$ are not interacting with others in the large group $G_1$ in the beginning. It seems that the particles tend to move with their own initial velocities. On the other hand, the particles in the MT model are trying to be aligned with their neighbors from the beginning. Even though both models exhibit flocking behavior and the analytical results require the same conditions for flocking, numerical simulations demonstrate again that the decay rate of convergence of the MT model to the flocking state is faster than the one of the CS model. We are not aware of results comparing the rate of decay to flocking for both models.
\begin{figure}[ht!]
\subfloat[]{\protect\protect\includegraphics[scale=0.32]{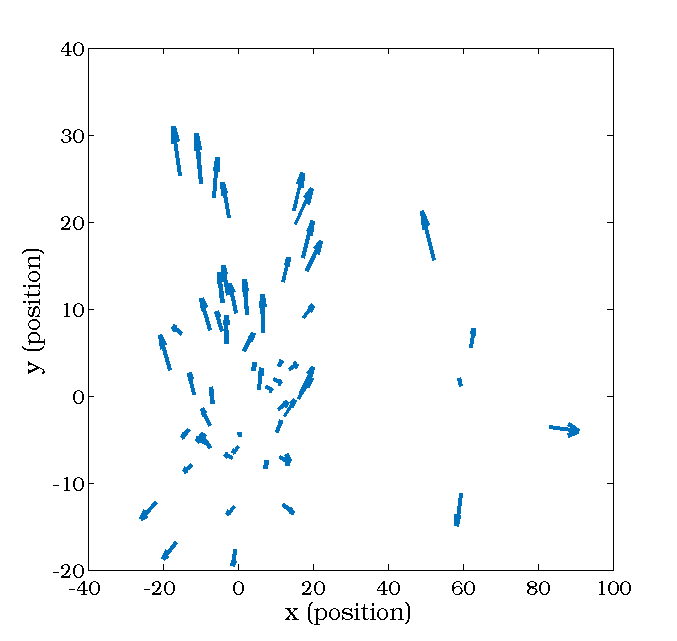}
}\subfloat[]{\protect\protect\includegraphics[scale=0.32]{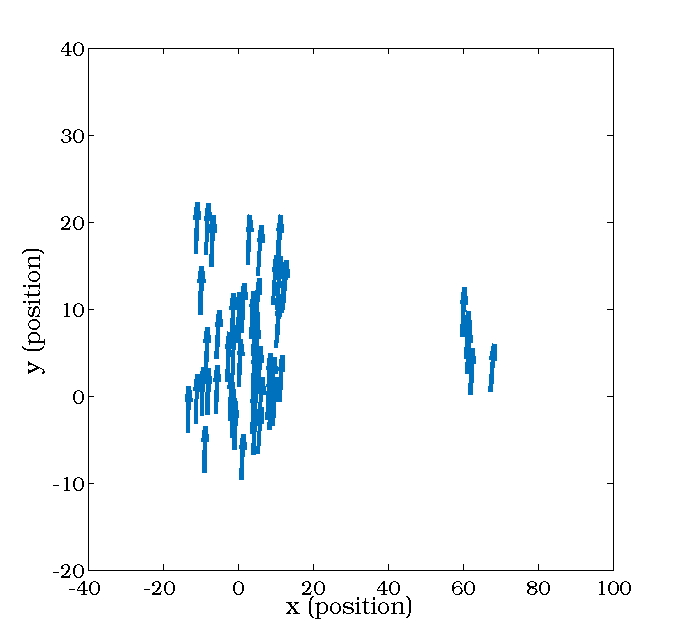}
}

\subfloat[]{\protect\protect\includegraphics[scale=0.32]{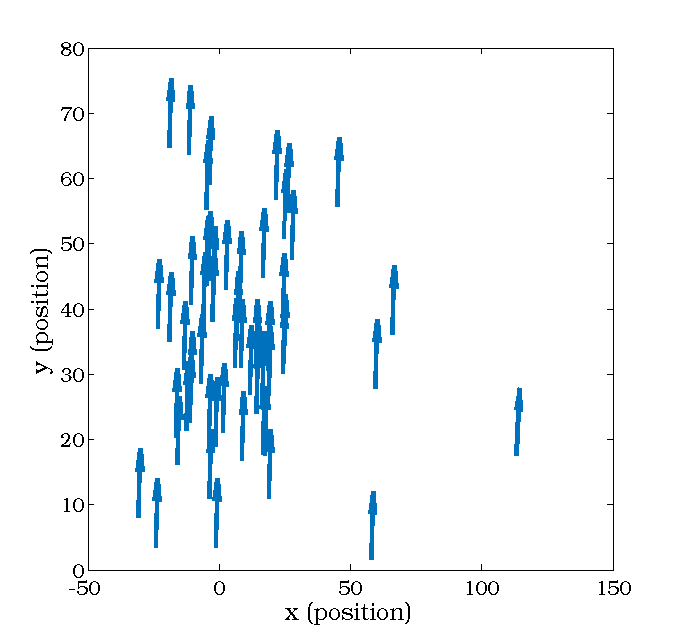}
}\subfloat[]{\protect\protect\includegraphics[scale=0.32]{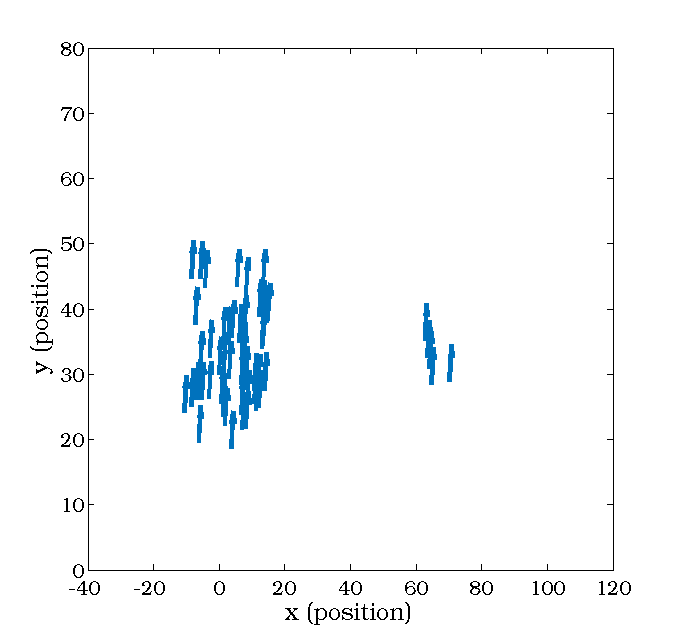}
}
\protect\protect\caption{\label{fig:Evolution-mt-5s}Comparison between time evolution of the system with $S_i(x) \equiv N$ and $S_i(x) = \sum_{k=1}^N \psi(x_i - x_k)$. Subplots (a) and (b) show a snapshot of the solutions at $t=5$ and subplots (c) and (d) show a snapshot of the solutions at $t=50$, for the CS and the MT models respectively.
}
\end{figure}
\vspace{2cm}

%
%
%
%
%
%
\subsection{Attractive-Repulsive models}\label{sec_p2} Now, we turn to the case in which we incorporate attractive-repulsive forces to the system \eqref{eq_par} with the CS alignment force:
\begin{equation}
\begin{cases}
\dot{x}_{i}=v_{i},\quad i = 1,\cdots,N, \quad t > 0,\\[2mm]
\displaystyle \dot{v}_{i}=\frac{1}{N}\,\sum_{j=1}^{N}\psi\left(x_{i}-x_{j}\right)\left(v_{j}-v_{i}\right)-\frac{1}{N}\,\sum_{j=1}^{N}\nabla K\left(x_{i}-x_{j}\right).
\end{cases}\label{cs-simul}
\end{equation}
For the interaction potential $K$, we will choose in most of our simulations a repulsive Newtonian potential confined by a quadratic one of the form
\begin{equation}\label{int_pot}
K(x) = \alpha\frac{\left|x\right|^{2}}{2} + k \phi(x) \quad \mbox{where} \quad -\Delta_x \phi(x) = \delta_0(x).
\end{equation}
By choosing $\alpha >0$, we confine our particles in a bounded region and they are repelled by Newtonian interaction choosing $k<0$. In general, $K$ is typically chosen repulsive at the origin and attractive at infinity in such a way that there is a typical length of stable interaction between two particles. Other popular choices as mentioned above are Morse-like and power-law like potentials as in \cite{DCBC,CMP,CHM1,CH} and the references therein.

The existence of particular solutions, called flock solutions, for the system \eqref{cs-simul} has recently received lots of attention due to the ubiquitous appearance of this kind of solutions in several swarming models.
\begin{definition}
A flock solution of the particle model \eqref{cs-simul} is a spatial configuration $\hat{x}$ with zero net interaction force on every particle, that is:
$$
\sum_{j\neq i}\nabla K(\hat x_i-\hat x_j)=0, \quad i=1,\cdots,N\,,
$$
that translates at a uniform velocity $m_0\in\R^d$, hence $(x_i(t),v_i(t))=(\hat{x_i}-tm_0,m_0)$. 
\label{def-flock} 
\end{definition}
The richness of the qualitative properties of the spatial configurations for the flock solution, also called flock profile, depending on the potential $K$ is quite impressive, see \cite{KSUB}. Other stable patterns were observed for related systems, for instance single or double rotating mills~\cite{LRC,DCBC,CDP,CKMT,CKR}. However, these milling patterns are typically eliminated due to the presence of the CS alignment term. The stability of flock patterns for the particle system \eqref{cs-simul} has recently been established in \cite{ABCV,CHM2}. 

As the total number of individuals gets large, the system of differential equations is difficult to analyse and usually a continuum description based on mean-field limits is adopted, either at the kinetic level for the particle distribution function ~\cite{CDP,CFTV} or at the hydrodynamic level for the macroscopic density and velocities \cite{CDMBC,CDP} as we will discuss in the next section. At the continuum level, the flock profiles are characterized by searching for continuous probability densities or probability measures $\rho$ of particle locations such that the total force acting on each individual balances out. This is equivalent to finding probability densities or measures $\rho$ such that
\begin{equation}\label{eq:basic}
    \nabla K\ast \rho = 0 \quad\mbox{ on supp}(\rho) \,.
\end{equation}
Being the problem posed on the support of the unknown density $\rho$ implies that the equation \eqref{eq:basic} is highly nonlinear. In fact, characterizing the interaction potentials $K$ such that these profiles are continuous or regular in their support is a very challenging question.
Explicit formulas for solutions to \eqref{eq:basic} for particular potentials such as Morse-like  and power-law like potentials are possible due to the particular properties of associated differential operators \cite{LBT,BT,FHK,CMP,CHM1,CH}. In particular, it is known from classical potential theory that the solution to \eqref{eq:basic} in the case of the confined repulsive Newtonian potential \eqref{int_pot} is given by a characteristic of a ball whose radius is determined to have the right total mass of the system. 

In Fig. \ref{fig:cs_simul}, we show a typical simulation for the system \eqref{cs-simul} with interaction potential given in \eqref{int_pot}. The initial positions and velocities $\{(x_{i0},v_{i0})\}_{i=1}^N$ with $N=50$ are generated randomly from the uniform distribution $[-10, 10]^2 \times \big\{ [-5,5]\times [-4.3,5.7] \big\}$ with the aim of having the mean velocity equal to $\frac{1}{N} \sum_{i=1}^N v_i(0) \approx (0,0.7)$. This simulation shows the generic flock formation after some time in which particles distribute more or less uniformly in a certain ball. Of course, we know that as $N\to \infty$ this distribution of particles will be getting closer and closer to the characteristic of the ball, see \cite{BLL,KSUB,BKSUV} for instance.
\vspace{-0.5cm}
\begin{figure}[ht!]
\subfloat[$Time=5\,$]{\protect\protect\includegraphics[scale=0.32]{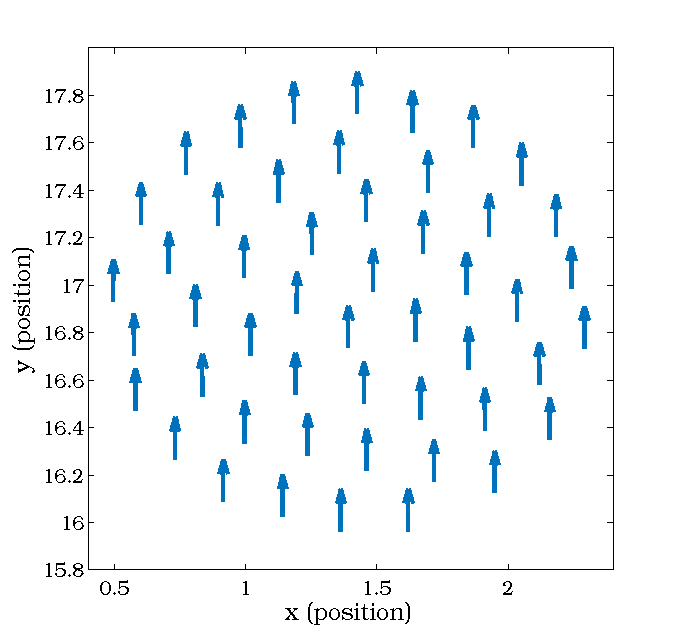}

}\subfloat[]{\protect\protect\includegraphics[scale=0.32]{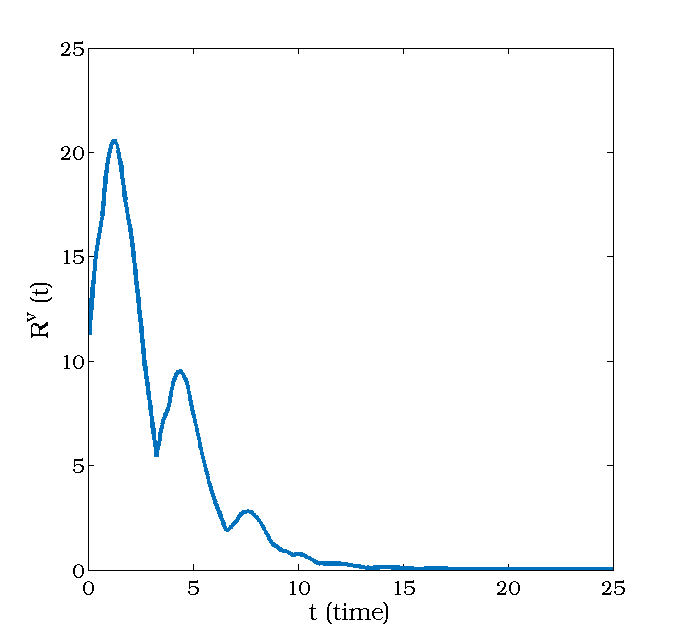}
}
\centering{}\protect\protect\caption{Large time behavior of solutions to the system \eqref{cs-simul} with $k=-1$, $\alpha=1$,  and $\beta = 0.5$. (a): The particle are uniformly distributed in the unit circle. (b): $R^v(t)$ eventually converges to zero as time goes on. However, it is not monotonically decreasing.
}
\label{fig:cs_simul}
\end{figure}

Let us finally mention that the continuum spatial profile of the flock solutions can also be found by steepest descent methods from first-order models of swarming. These models can be found formally from the previous second-order particle models by substituting the CS alignment term by simple linear friction force and assuming that inertia is negligible with respect to other terms, see \cite{MEK,MEBS}. In this limit, they lead to
\begin{equation}\label{1storder}
    \frac{d}{dt}x_i=-\frac{1}{N} \sum_{j\neq i}\nabla K(|x_i-x_j|), \quad i=1,\cdots,N\,.
\end{equation}
By taking the mean-field limit in \eqref{1storder}, as $N\to \infty$, one derives the so-called aggregation equation
\begin{equation}\label{aggreg}
\rho_t = \nabla\cdot(\rho\nabla K*\rho)\,,
\end{equation}
for the evolution of the mass density of particles. The aggregation equation \eqref{aggreg} with repulsive-attractive potentials has attracted lots of attention in the last years in the mathematical analysis community due to its rich regularity structure for steady states and solutions depending on the singularity of the potential at the origin, see \cite{CMV,CMV2,Lau,TB,TBL,BL,BCL,BLR,BLL,BV,BC,BCLR,BCLR2,CDFLS,CDFLS2,CCH} and the references therein.

%
%
%
%

\section{Macroscopic descriptions: Flocking behavior and finite-time blow-up phenomena}
In this section, we study some of the features and properties of continuum models for collective behavior being capable of describing flocking behavior. By using BBGKY hierarchies or mean-field limits \cite{CFTV}, we can derive a Vlasov-type equation from the system \eqref{cs-simul}. More precisely, when the number of individuals goes to infinity, i.e., $N \to \infty$, the mesoscopic observables for the system \eqref{cs-simul} can be calculated from the velocity moments of the density function $f = f(x,v,t)$ which is a solution to the following Vlasov-type equation:
\begin{equation}\label{ki_cs}
\begin{cases}
\partial_{t}f+v\cdot\nabla_{x}f+\nabla_{v}\cdot\left(F(f)f - (\nabla_x K \star \rho) f\right)=0,\quad x,v\in\R^{d}, t>0,\\[2mm]
{\displaystyle \mathbf{\mathrm{\mathit{\mathit{F}(f)= \int_{\mathbb{R}^{d}\times\mathbb{R}^{d}}\psi(x-y)(w-v)f(y,w,t)\,dydw}}}},\\[2mm]
{\displaystyle \rho(x,t) = \int_{\R^d} f(x,v,t)\,dv }.
\end{cases}
\end{equation}
The derivation of the kinetic equation \eqref{ki_cs} is well studied only for regular potentials \cite{BH, Dob, Neun}. If $K$ vanishes, rigorous mean-field limit, existence of weak solutions, and large-time behavior of measure-valued solutions are studied in \cite{CFRT, HL,CCH}. We also refer to \cite{BCHK, BCHK2, BCHK3, BCHK4, CCK, Choi2} for a dynamics of flocking particles interacting with homogeneous/inhomogenous fluids. For the equation \eqref{ki_cs} under certain conditions for $K$, quite general frameworks are proposed in \cite{BCC,CCR2,CCH2}. For not too singular interaction potentials $K$, the rigorous derivation of \eqref{ki_cs} is studied in \cite{HJ}.

The kinetic description has to be taken as usual as an intermediate mesoscopic description of the system leading to macroscopic models for collective behavior via asymptotic limits or closure assumptions. By taking moments on the kinetic equation \eqref{ki_cs} together with a zero temperature closure or monokinetic assumption for the local hydrodynamics solution, one can obtain hydrodynamic descriptions of the system \eqref{cs-simul}. This procedure has been perforemed in different ways by different authors, see for instance \cite{CDP,CDMBC,HT}. Associated to the kinetic distribution function $f(x,v,t)$, one can define the mean velocity as
$$
\rho(x,t) u(x,t) = \int_{\R^d} v f(x,v,t)\,dv\,.
$$
By taking the first two moments with respect to $v$ on the kinetic equation \eqref{ki_cs}, one formaly obtains
\begin{equation}
\begin{cases}
\partial_{t}\rho+\nabla_x\cdot\left(\rho u\right)=0,\quad x\in\R^{d},\quad t>0,\\[2mm]
{\displaystyle \pa_{t}(\rho u)+\nabla_x\cdot(\rho u\otimes u)+ \nabla_x\cdot\left( 
\int_{\R^d} (v-u)\otimes (v-u) f(x,v,t)\,dv \right)=}\\[2mm]
{\displaystyle \qquad\qquad\qquad\qquad\,\,\,\rho\int_{\R^{d}}\psi(x-y)(u(y)-u(x))\rho(y)\,dy - \rho \lt(\nabla_x K \star \rho\rt).}\\
\end{cases}\label{h2_CS}
\end{equation}
Assuming that the distribution function is not far from monokinetic, that is
$$
 f(x,v,t) \simeq \rho (x,t) \, \delta (v - u(x, t)),
$$
the hydrodynamic system \eqref{h2_CS} is reduced to the following pressureless Euler-type equations given by
\begin{equation}
\begin{cases}
\partial_{t}\rho+\nabla_x\cdot\left(\rho u\right)=0,\quad x\in\R^{d},\quad t>0,\\[2mm]
{\displaystyle \pa_{t}(\rho u)\!+\!\nabla_x\!\cdot\!(\rho u\otimes u)\!=\!\rho\!\!\left(\!\int_{\R^{d}}\!\!\psi(x-y)(u(y)-u(x))\rho(y)\,dy \!- \!\nabla_x K \star \rho\!\!\rt),}
\end{cases}\label{h_CS}
\end{equation}
where $\rho = \rho(x,t)$ and $u = u(x,t)$ represent the particle density and their corresponding mean velocity, respectively. For now on, $\Omega(t)$ denotes the interior of the support of the density $\rho$, i.e., $\Omega(t) := \{x \in \R : \rho(x,t) >0\}$. We assume that $\Omega(0)=:\Omega_0$ is a bounded open set. The hydrodynamic system \eqref{h_CS} has to be complemented with initial conditions
\begin{equation}\label{ini_h_CS}
\left.\lt(\rho(\cdot, t),u(\cdot, t)\rt)\right|_{t=0}=\left(\rho_{0},u_{0}\right)\,.
\end{equation}

As usual with hydrodynamic equations, we observe that they are mathematically challenging due to the nonlinearity introduced by the material derivative of the velocity field that can lead to blow-up of the velocity profile. On the other hand, flock profiles are also solutions of the hydrodynamic equations \eqref{h_CS}. Actually, if the density $\rho$ satisfies \eqref{eq:basic} and the velocity is constantly given by $u(x,t)=u_0\in \R^d$, then they form a particular solution of the hydrodynamic equations \eqref{h_CS}. This is the flocking solution at the hydrodynamical level of description for collective behavior.

In the next subsections, we will numerically explore the derived hydrodynamic equations \eqref{h_CS} in one dimension providing numerical evidence showing the flocking behavior and the finite-time blow-up of solutions giving some insight in the stability of flock solutions and the conditions for blow-up of the solutions. Analytical results concerning these hydrodynamic equations are very few in the literature, we will compare to existing analytical results in the relevant sections below.

%
%
%
%

\subsection{Numerical scheme}\label{subsec:numerics} 
For the numerical simulation of the hydrodynamic system \eqref{h_CS}, we use a Lagrangian numerical scheme. With this purpose, we consider the characteristic flow $\eta(x,t)$ associated to the fluid velocity $u$ defined by
\begin{equation}\label{eq:char}
\frac{d \eta(x,t)}{dt} = u(\eta(x,t),t) =: v(x,t) \quad \mbox{with} \quad \eta(x,0) = x.
\end{equation}
Set $h(x,t) := \rho(\eta(x,t),t)$, then using the characteristic flow \eqref{eq:char}, we can rewrite the system \eqref{h_CS} in one dimension as
\begin{align}
\begin{aligned}
\begin{cases}
\displaystyle h(x,t) = \displaystyle \rho_0(x)\lt(\frac{\pa \eta}{\pa x}(x,t)\rt)^{-1},\\[2mm]
\displaystyle \frac{dv}{dt}(x,t) = \displaystyle\int_{\R}\psi(\eta(x,t)-\eta(y,t))\left(v(y,t)-v(x,t)\right)\rho_0(y)\,dy\\[2mm]
\qquad\qquad \displaystyle \,\,\,-\int_{\R} \frac{\partial K}{\partial x}(\eta(x,t)-\eta(y,t))\rho_0(y)\,dy,
\end{cases}\label{eq:eqs-lagrange_notation}
\end{aligned}
\end{align}
with the initial data
\[
\left.\lt(h(\cdot, t),v(\cdot, t)\rt)\right|_{t=0}=\left(\rho_{0},u_{0}\right).
\]
Note that the continuity equation for the density $\eqref{eq:eqs-lagrange_notation}_1$ is decoupled from the rest. Thus it can easily be solved from the information obtained from the characteristic and momentum equations.

We do a spatial discretization of the equation $\eqref{eq:eqs-lagrange_notation}_2$ choosing a uniform mesh of length $\Delta x$ of the initial positions of the particles with a number of points given by $n$. For each node $i$, the position, density and velocity through the characteristics of the ith-particle will be denoted as $\eta_i(t)$, $h_i(t)$ and $v_i(t)$. At each node the term $\frac{\pa \eta_i(t)}{\pa x}$ is computed from $\eta_j(t)$, that is with the information about how the position of the nodes change through the characteristics in time, by standard finite differences of fourth order, and afterwards that value is inverted. The end values use one-sided finite differences to avoid unknown values of the Lagrangian density. The spatial derivative of the interaction potential appearing in $\eqref{eq:eqs-lagrange_notation}_2$ is obtained analytically and not approximated numerically. For the integral terms in $\eqref{eq:eqs-lagrange_notation}_2$, we approximate them by direct numerical quadrature formulas as
$$\begin{aligned}
\int_{\Omega_0}\psi(\eta(x,t)-\eta(y,t)) v(y,t) \rho_0(y)\,dy &\sim\, \Delta x\sum_{j=1,j\neq i}^{n} \psi(\eta_i(t)-\eta_j(t))\,v_j(t)\,\rho_j(0), \cr
\int_{\Omega_0}\psi(\eta(x,t)-\eta(y,t))\rho_0(y)\,dy &\sim\,\Delta x\sum_{j=1,j\neq i}^{n} \psi(\eta_i(t)-\eta_j(t))\,\rho_j(0),\cr
\int_{\Omega_0}  \frac{\partial K}{\partial x}(\eta(x,t)-\eta(y,t))\rho_0(y)\,dy &\sim \,\Delta x\sum_{j=1,j\neq i}^{n} \frac{\partial K}{\partial x}(\eta_i(t)-\eta_j(t))\,\rho_j(0).
\end{aligned}$$
Taking everything into consideration, the system 
of equations \eqref{eq:eqs-lagrange_notation}, for each node $i$, takes the form
\begin{equation}
\begin{cases}
\displaystyle \frac{d\eta_i(t)}{dt}=v_i(t),\\[2mm]
\displaystyle h_i(t)=\rho_i(0)\lt(\frac{\pa \eta_i(t)}{\pa x}\rt)^{-1},\\[2mm]
\displaystyle \frac{dv_i(t)}{dt}\!\!= \Delta x\!\!\!\!\!\!\!\sum_{j=1,j\neq i}^{n} \!\!\!\!\!\!\!\rho_j(0)\!\left[\psi(\eta_i(t)\!-\!\eta_j(t))\!\!\left(v_j(t)\!-\!v_i(t)\right)\!-\!\!\frac{\partial K}{\partial x}\!(\!\eta_i(t)\!-\!\eta_j(t)\!)\right]
\end{cases}\label{eq:spatial-discret}
\end{equation}
A temporal discretization is carried out in the system composed by the equations $\eqref{eq:spatial-discret}_1$ and $\eqref{eq:spatial-discret}_3$, in order to obtain the evolution of position and velocity through the caracteristics. The employed numerical scheme is a classical fourth-order Runge-Kutta explicit scheme using the built-in $ode45$ Matlab command. Subsequently, the density is obtained from $\eqref{eq:spatial-discret}_2$. The numerical experiments have been performed with the version R2014a of Matlab, using a computer ACER Aspire V3-572G. Similar Lagrangian approaches have been taken by other authors in related problems \cite{CKMT,KT}.

In all the test cases below, we choose the same initial conditions for the particle positions, the initial position of the nodes has been set uniformly distributed inside the interval $[-0.75,\,0.75]$. Thus, the initial position of each node $i$ is given by
\[
\eta_i(0)=-0.75+\frac{1.5}{n-1}\left(i-1\right)\quad \mbox{for} \quad i=1,\cdots,n.
\]
In most of our simulations $n=200$ if it is not specified otherwise. The initial conditions of density and velocity will be specified for each case that will be treated subsequently.

%
%
%
%

\subsection{Euler-alignment system}\label{subsec:euler-alig}  The aim of this subsection is to analyze numerically some of the open problems related to the theoretical results studied in \cite{CCTT} on critical threshold phenomena for the system \eqref{h_CS} without the interaction forces. We will take the communication function of CS model given by 
\[
\psi(x)=\frac{1}{(1+|x|^{2})^{\beta/2}}, \quad \beta > 0.
\]
In this case, the global regularity or the finite time blow up of the solution can be determined by the initial configurations with sharp conditions. We use this case as validation to our scheme being capable of showing either the global consensus in velocity of the blow-up in the velocity field and density as shown by the theory \cite{CCTT}. We also numerically compare the large-time behaviors of solutions to the CS and the MT models at the hydrodynamic level. The simulations in this subsection are done with initial density
\[
\rho_{i}(0)=\frac{1}{\gamma}\cos\left(\pi\,\frac{x_{i}(0)}{1.5}\right)\quad \mbox{for each node } i=1,\cdots, n,
\]
where the constant $\gamma$ is fixed by the mass normalization, i.e.,
$\sum_{i=1}^n \rho_{i}(0) = 1$. Concerning the initial velocity, we choose
\[
u_{i}(0)=-c\text{\,}\sin\left(\pi\,\frac{x_{i}(0)}{1.5}\right)\quad \mbox{for each node } i=1,\cdots, n,
\]
where the constant $c >0$ will be varied to study different initial conditions in the simulations except for the comparison between the MT and CS models.

\subsubsection{Hydrodynamic Cucker-Smale model}

In this case, a critical threshold leading to sharp dichotomy between global-in-time existence or finite-time blowup of solutions is provided in \cite{CCTT}. The region where the solutions blow up in a finite time is called ``{\it Supercritical region}'', otherwise the region in which the solutions globally exist in time it called ``{\it Subcritical region}''.   

\begin{theorem}\label{thm_cri} Let $(\rho,u)$ be classical solutions to the system \eqref{h_CS}-\eqref{ini_h_CS} with $K=0$. 
\begin{itemize} 
\item (Subcritical region) If $\partial_{x}u_{0}(x)\geq-\psi\star \rho_{0}(x)$
for all $x\in\mathbb{R}$, then the system has a global classical
solution. 
\item (Supercritical region) If there exists an $x$ such that
$\partial_{x}u_{0}(x)<-\psi\star \rho_{0}(x)$, then there is a finite
time blow up of the solution. Moreover, this blow-up happens as an infinite negative slope in the velocity and divergence value of the density at the same location.
\end{itemize}
\end{theorem}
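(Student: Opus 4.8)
\emph{Proof plan.} The plan is to follow the critical-threshold analysis of \cite{CCTT}, exploiting the one-dimensional structure to collapse the PDE system into a scalar Riccati-type equation along characteristics. First I would write \eqref{h_CS} with $K=0$ in non-conservative form along the flow map \eqref{eq:char}: denoting the material derivative by $'=\pa_t+u\,\pa_x$, the continuity equation gives $\rho'=-\rho\,d$ with $d:=\pa_x u$, while differentiating the momentum equation in $x$ produces an evolution equation for $d$ that, crucially, still carries the nonlocal alignment term. The entire game is to find the combination of $d$ and the nonlocal density $a:=\psi\star\rho$ that decouples.

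The key step is to compute the evolutions of $a$ and of $d$ separately. Using $\pa_t\rho=-\pa_x(\rho u)$ and integrating by parts against $\psi$, I would obtain
\[
a' = -\int_\R \psi'(x-y)\bigl(u(y)-u(x)\bigr)\rho(y)\,dy,
\]
and show that the very same integral reappears with the opposite sign in the equation for $d$, namely $d'=-d^2-d\,a+\int_\R\psi'(x-y)(u(y)-u(x))\rho(y)\,dy$. Adding the two, the nonlocal commutators cancel exactly and the quantity $e:=d+a=\pa_x u+\psi\star\rho$ satisfies the clean, \emph{local} equation $e'=-d\,e$. Together with $\rho'=-\rho\,d$ this gives $(e/\rho)'=0$, so $e/\rho$ is conserved along every characteristic. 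This conserved ratio is what converts the dichotomy into a statement about the sign of $e_0=\pa_x u_0+\psi\star\rho_0$, since $\rho>0$ forces $e$ to keep the sign of its value $e_0$ at the foot of the characteristic.

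For the \textbf{subcritical} case $e_0\geq0$, I would argue that $e\geq0$ for all time, so $d=e-a\geq-a\geq-\|\psi\|_\infty M$, where $M=\int\rho_0$ is the conserved mass and $\psi\le1$. This lower bound on $d$ feeds back through $\rho'=-\rho d$ to yield $\rho(t)\le\rho_0 e^{Mt}$, hence $e=(e_0/\rho_0)\rho$ and therefore $d=e-a$ remain bounded above on every finite interval. A two-sided bound on $\pa_x u$ on each $[0,T]$, together with boundedness of $\rho$, is precisely the continuation criterion for classical solutions of \eqref{h_CS}, so the solution is global.

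For the \textbf{supercritical} case, at a point where $e_0<0$ the conserved ratio $c:=e_0/\rho_0<0$ gives $e=c\rho<0$ along that characteristic, whence $\rho'=-\rho d=-\rho(c\rho-a)\ge|c|\,\rho^2$ because $a\rho\ge0$. This Riccati inequality forces $\rho\to+\infty$ no later than $t=1/(|c|\rho_0)$, and simultaneously $d=c\rho-a\le-|c|\rho\to-\infty$ at the same location, which is exactly the asserted blow-up with infinite negative velocity slope and diverging density. I expect the main obstacle to be the second paragraph: securing the exact cancellation of the two nonlocal integrals so that $e$ closes into a local ODE. Everything afterward is elementary Gronwall and Riccati comparison, with the bound $0\le\psi\star\rho\le M$ being the only place the nonlocality re-enters.
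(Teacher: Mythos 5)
Your proposal is correct and takes essentially the same route as the paper's source: the survey states Theorem \ref{thm_cri} without proof, citing \cite{CCTT}, and your argument --- the exact cancellation making $e:=\partial_x u+\psi\star\rho$ satisfy $e'=-d\,e$ along characteristics (equivalently, $e$ solves the same continuity equation as $\rho$, so $e/\rho$ is conserved), followed by Riccati comparison --- is precisely the mechanism used there. The only point to tighten is vacuum: where $\rho_0=0$ the ratio $e/\rho$ is undefined, so you should propagate the sign and the bounds of $e$ directly from $e'=-d\,e$ (which still gives the subcritical bound $d\le e\le \|e_0\|_{L^\infty}e^{Mt}$), and note that the simultaneous divergence of the density in the supercritical case pertains to points where $\rho_0(x)>0$.
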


For the numerical simulations, three different cases corresponding
with the values of the constant $c =0.2, 0.4, 0.5$ are treated. The first two cases lie in subcritical region, and the third one lies in the supercritical region, see Fig. \ref{fig:IC-eu-alig}.
\begin{figure}
\center \includegraphics[scale=0.4]{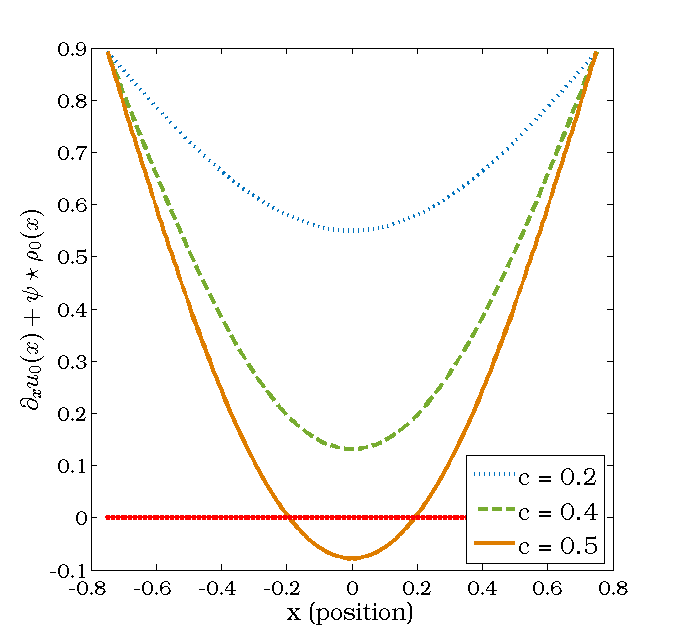}\protect\protect\caption{\label{fig:IC-eu-alig}Values of $\partial_{x}u_{0}(x)+\psi\star \rho_{0}(x)$ for different values of 
$c$.}
\end{figure}

Since the initial configurations of the first two cases lie in the subcritical region, we have global regularity of solutions. Both initial configurations are symmetric, thus the initial mean velocity and center of mass are zero, which are kept through their evolution. The numerical simulations in Fig. \ref{fig:eu-alig-c02} demonstrate that, and they are consistent with Theorem \ref{thm_cri} leading to global in time solutions. Even though both cases, $c = 0.2, 0.4$ are inside the subcritical region, the steady density for the case $c = 0.4$ shows more concentrated behavior at the middle of the domain(Fig. \ref{fig:eu-alig-c02}. (c)) than the case $c = 0.2$(Fig. \ref{fig:eu-alig-c02}. (a)). For both cases, the velocity converges to zero as time goes on which gives the global consensus or flocking behavior in this case. We see that the profile of density depends in a complicated way on the initial density configuration as it happens in the particle system.

\begin{figure}
\subfloat[]{\protect\protect\includegraphics[scale=0.32]{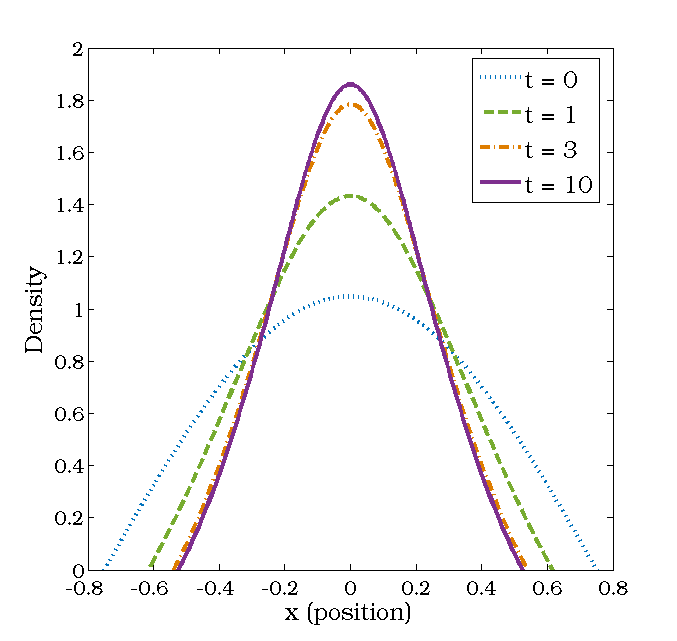}
}\subfloat[]{\protect\protect\includegraphics[scale=0.32]{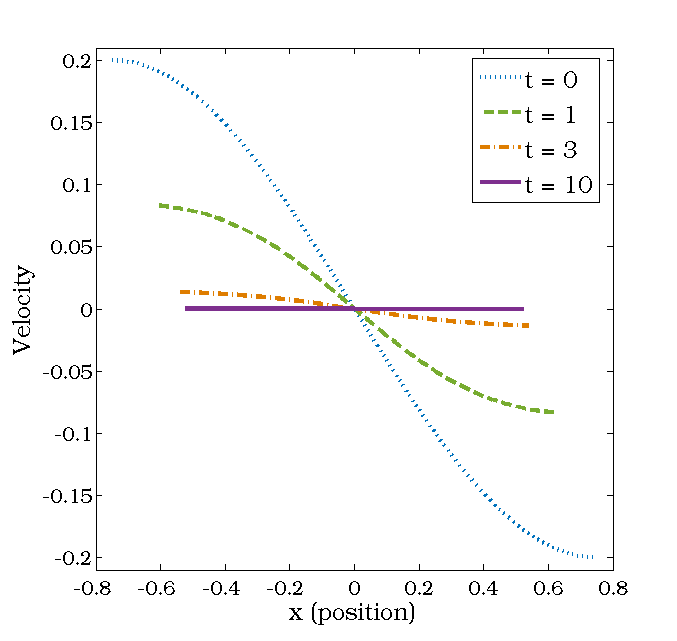}
} \newline
\subfloat[]{\protect\protect\includegraphics[scale=0.32]{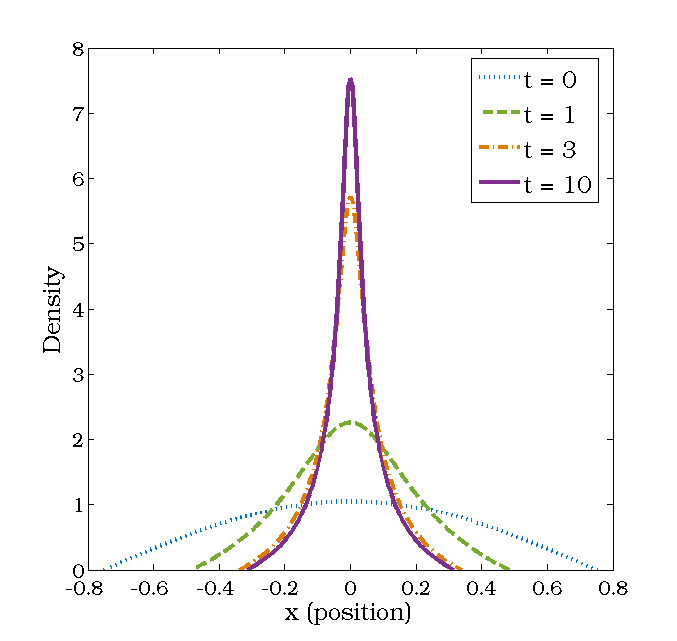}
}\subfloat[]{\protect\protect\includegraphics[scale=0.32]{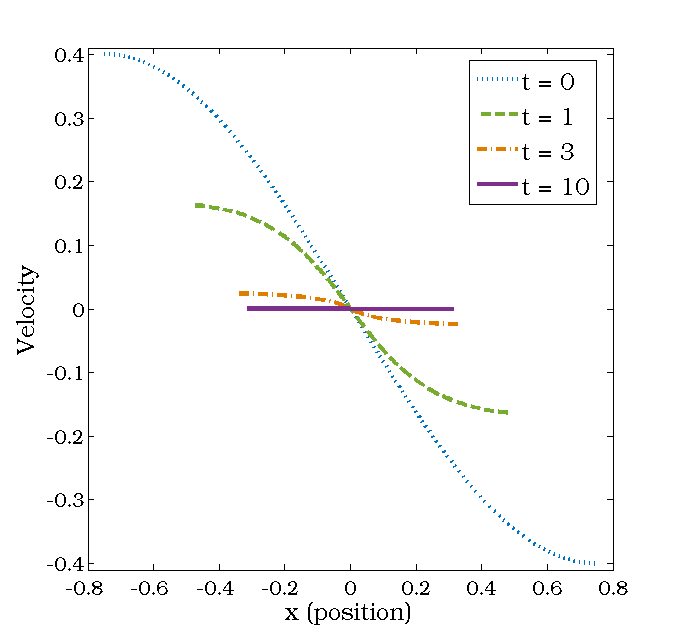}
}
\protect\protect\caption{\label{fig:eu-alig-c02} Subcritical Region: Global Existence and Asymptotic Flocking.- (a), (b): The evolution of density and velocity for the case of $c=0.2$. (c), (d): The evolution of density and velocity for the case of $c=0.4$.}
\end{figure}

When $c=0.5$, the initial data lies in the supercritical region, and thus a blow up should be expected from Theorem \ref{thm_cri}. Indeed, the numerical scheme is capable to show this blow-up phenomenon. We can observe that the derivative of the velocity becomes sharper and sharper at the origin, see the inlet in Fig. \ref{fig:eu-alig-c05}(b), while the density value at the origin gets larger and larger, Fig. \ref{fig:eu-alig-c05}(a). Actually, the simulation can not be continued after $t = 2.7311$ due to the high value of the density and the large negative derivated of the velocity at the origin. Fig. \ref{fig:eu-alig-c05} depicts the time-behavior of density until $t=2$.

\begin{figure}
\subfloat[]{\protect\protect\includegraphics[scale=0.32]{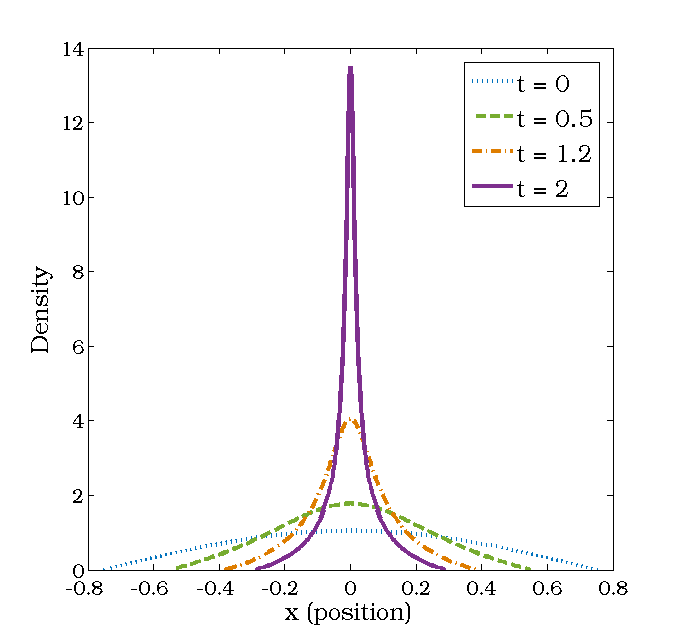}
\llap{\shortstack{%
        \includegraphics[scale=.1]{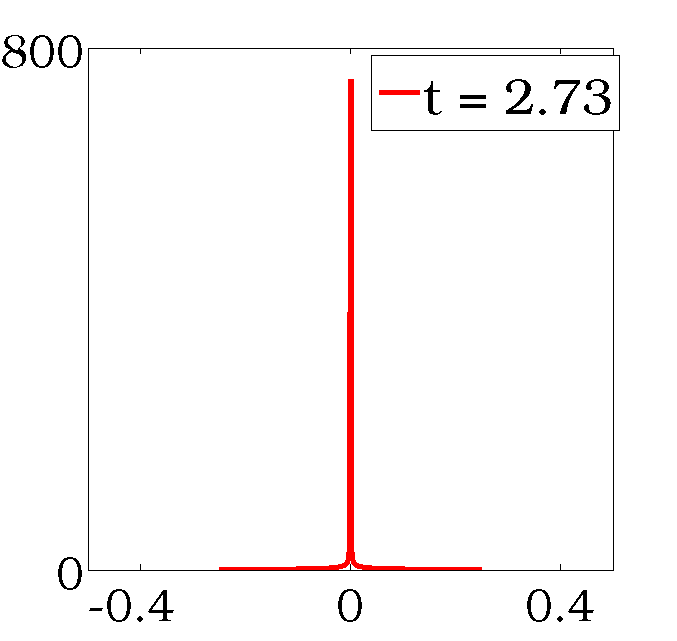}\\
        \rule{0ex}{1.25in}%
      }
  \rule{1.23in}{0ex}}
}\subfloat[]{\protect\protect\includegraphics[scale=0.32]{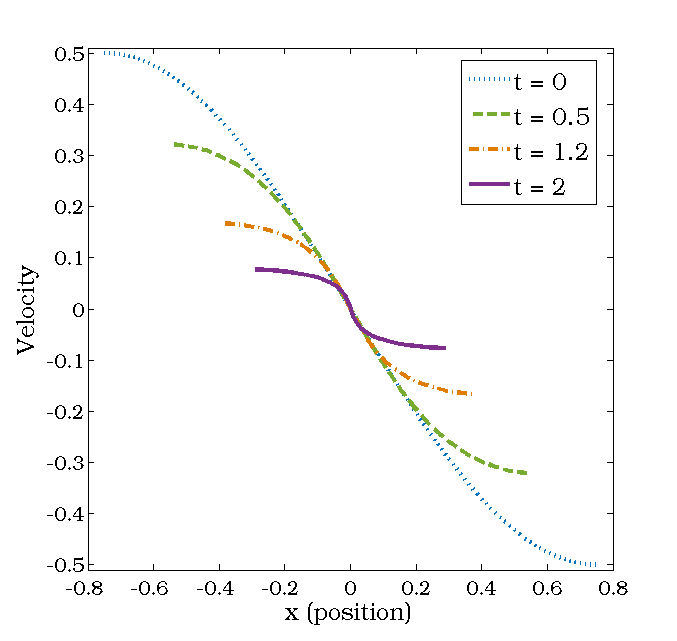}
\llap{\shortstack{%
        \includegraphics[scale=.1]{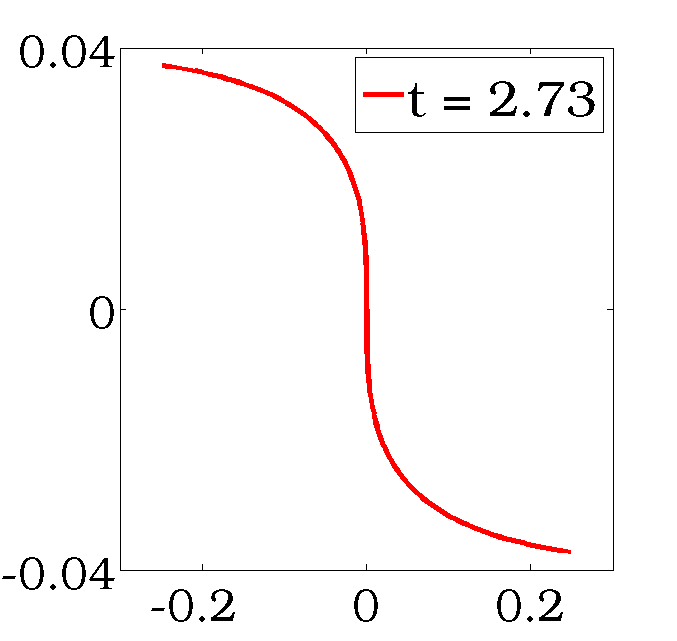}\\
        \rule{0ex}{0.23in}%
      }
  \rule{1.23in}{0ex}}
}
\protect\protect\caption{\label{fig:eu-alig-c05}Supercritical Region.- The evolution of density (a) and velocity (b) for the case of $c=0.5$. }
\end{figure}

\subsubsection{Numerical comparison of Cucker-Smale and Motsch-Tadmor equations}

The aim of this subsection is to compare the solutions of the hydrodynamic CS and MT systems. Taking a similar strategy for the CS model, the hydrodynamic MT system can be formally derived from the kinetic and the particle levels of the MT model, leading to
\begin{equation}\label{h_MT}
\begin{cases}
\partial_{t}\rho+ \pa_x \left(\rho u\right)=0,\quad x\in\R,\quad t>0,\\[2mm]
\displaystyle \pa_{t}(\rho u)+\pa_x (\rho u^2) =  \frac{1}{\psi\ast \rho}\int_{\R}\psi(x-y)(u(y)-u(x))\rho(x)\rho(y)\,dy.
\end{cases}
\end{equation}
For the system \eqref{h_MT}, the large-time behavior of solutions showing the velocity alignment is studied in \cite{MT} and the critical thresholds phenomena is provided in \cite{TT}.

For the numerical simulations, we take the parameter $\beta = 0.5$ in the communication function $\psi$. The initial conditions have been chosen following the simulations carried out at the particle level, see Fig. \ref{fig:Initial-conditions-mt} and \ref{fig:Evolution-mt-5s}. The objective has been to establish two zones in the initial domain with an important difference in mass. In addition, those two regions start with opposite velocity, so that the direction of the velocity corresponding to the largest zone will prevail.

The initial distances of Fig. \ref{fig:Initial-conditions-mt} have been preserved in the simulation, although here they have been divided by $10$ for visualization. Then, the initial positions of the nodes are
\[
\eta_i(0)=-1+\frac{7.5}{n-1}\left(i-1\right)\quad \mbox{for} \quad i=1,\cdots,n.
\]
Concerning the initial density, the sum of the mass of both regions is unit, and the relation between the individual masses is the same as in the particle level, $50/5$. Then the initial density is a piecewise function that satisfies
\[
\rho_{i}(0)=
\begin{cases}
\frac{1}{\gamma_1}\cos\left(\pi\,\dfrac{x_{i}(0)}{2}\right)\quad \mbox{if } \eta_i(0) \in [-1,1],\\[2mm]
0\quad \mbox{if } \eta_i(0) \in (1,5.5),\\[2mm]
\frac{1}{\gamma_2}\cos\left(\pi\,\dfrac{x_{i}(0)-6}{1}\right)\quad \mbox{if } \eta_i(0) \in [5.5,6.5]\\[2mm]
\end{cases}\quad \mbox{for} \quad i=1,\cdots,n,
\]
where $\gamma_1$ and $\gamma_2$ are chosen to satisfy the conditions on the masses.
With respect to the initial velocities, both groups start with opposite velocities given by
\[
u_{i}(0)=
\begin{cases}
0.1\cos\left(\pi\,\dfrac{x_{i}(0)}{2}\right)\quad \mbox{if } \eta_i(0) \in [-1,1],\\[2mm]
0\quad \mbox{if } \eta_i(0) \in (1,5.5),\\[2mm]
-0.1\cos\left(\pi\,\dfrac{x_{i}(0)-6}{1}\right)\quad \mbox{if } \eta_i(0) \in [5.5,6.5]\\[2mm]
\end{cases}\quad \mbox{for} \quad i=1,\cdots,n.
\]

In Fig. \ref{fig:mt-cs-hydro}. (a) and (b) the evolution of the density and velocity are showed, at $t=20$. As it happened in the microscopic case, for MT system the small group tends to keep the initial configuration, while for the CS case it varies more.

In order to compare the convergence rate to steady states of these systems, we consider the following quantity which measures the difference between velocities on the support of density: 
\[
R_{\rho}^{v}(t):=\sup_{x,y\in\,\Omega(t)}|u(x,t)-u(y,t)|.
\]
As depicted in Fig. \ref{fig:mt-cs-hydro}. (c), the MT model shows faster decay rate than the CS model, which is already observed at the particle level, see Fig. \ref{fig:Initial-conditions-mt}. (b). 

\begin{figure}
\subfloat[]{\protect\includegraphics[scale=0.32]{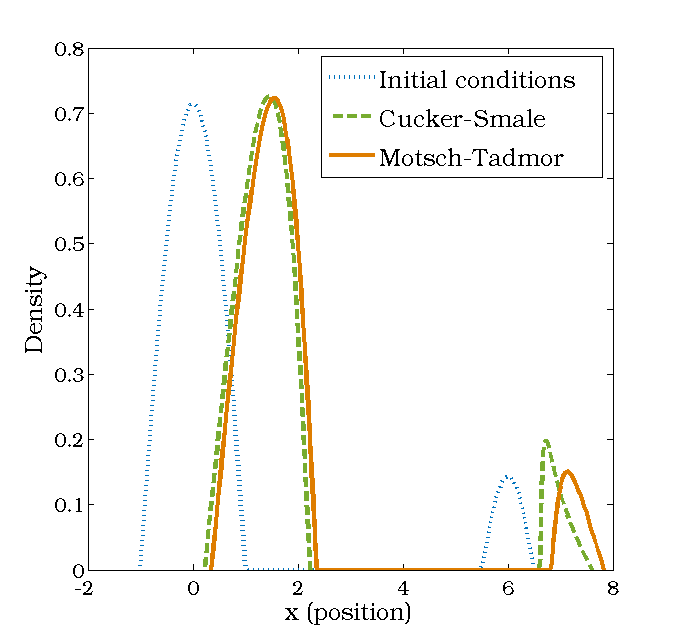}

}\subfloat[]{\protect\includegraphics[scale=0.32]{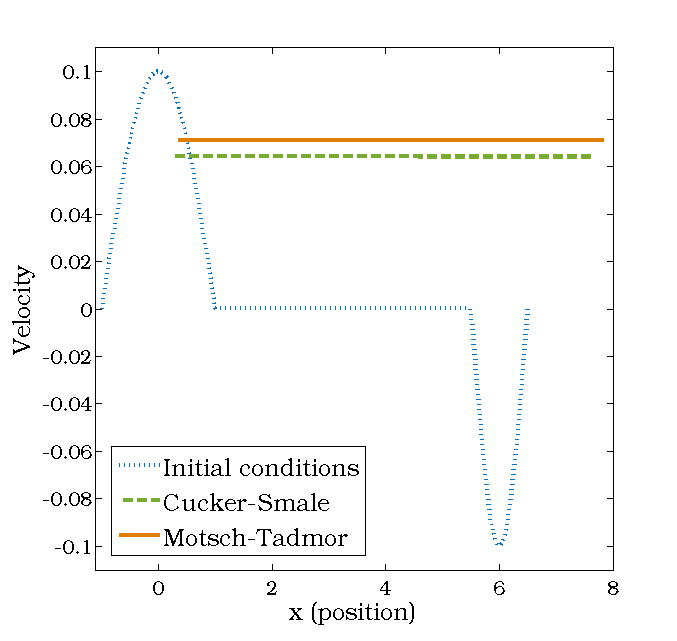}}
\newline
\centering
\subfloat[]{\protect\includegraphics[scale=0.32]{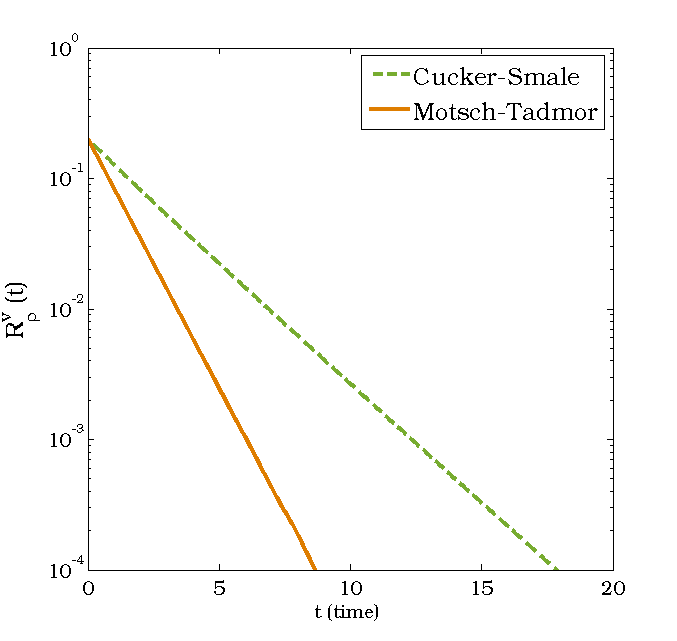}

}
\protect\caption{\label{fig:mt-cs-hydro} Comparison between CS \eqref{h_CS} with $K=0$ and MT \eqref{h_MT} systems. - (a): Evolution of the density at $t=20$. (b): Evolution of the velocity at $t=20$. (c): Decay rate of the velocity.}
\end{figure}

%
%
%
%
\subsection{Attractive-Repulsive models}

In this subsection, we consider the system \eqref{h_CS} with the following power-law potential in one dimension:
\[
K(x) = \frac{|x|^a}{a} - \frac{|x|^b}{b},
\]
with the convention that $|x|^0/0 = \log |x|$. In one dimension, $K(x) = k|x|$ corresponds to the attractive ($k>0$) or repulsive ($k<0$) Poisson force. In Section \ref{sec_EPA}, we deal with this case and study the critical thresholds analytically and numerically. In particular, there is a gap between the known subcritical and supercritical regions for the repulsive Poisson force (see Theorem \ref{thm_cri2}), we numerically analyse initial data in this region. In Section \ref{sec_EPAP}, we study the case $a= 2$ and $b = 1$. In this case, it is well-known, as discussed in Section 2.2 and at the beginning of Section 3, that there is a flocking solution
given by a density profile satisfying \eqref{eq:basic} whose solution is the characteristic function of an interval. In particular, there is an steady state with $u=0$ with this density profile. We numerically show the convergence in time of density to this steady state whenever solutions are globally defined. We also study the blow-up phenomena in this case which is not present in first-order models of swarming such as the aggregation equation \eqref{aggreg}. Finally, the case $a = 2$ and $b = 0$, i.e., when the repulsive force is more repulsive than Newtonian is also numerically explored in Section \ref{sec_EPAL}. In this case, the flocking profile/stationary state for the density satisfying \eqref{eq:basic} is given by the semi-circle law as shown in \cite{CFP} and the references therein. 

%
%
%
%

\subsubsection{Euler-Poisson-alignment system}\label{sec_EPA}
In this part, we consider the system \eqref{h_CS} with Poisson forcing term in one dimension:
\begin{equation}\label{h_CS_P}
\begin{cases}
\partial_{t}\rho+\pa_x \left(\rho u\right)=0,\quad x\in\R,\quad t>0,\\[2mm]
{\displaystyle \pa_{t}(\rho u)+ \pa_x(\rho u^2)=\int_{\R}\psi(x-y)(u(y)-u(x))\rho(x)\rho(y)\,dy - \rho \lt(\pa_x K \star \rho\rt),}
\end{cases}
\end{equation}
with $K\left(x\right)=k|x|$ attractive when $k>0$ and repulsive when $k<0$. Note that $k=\pm 0.5$ corresponds to the Newtonian interaction.
For the system \eqref{h_CS_P}, the critical thresholds are studied in \cite{CCTT} depending on the sign of $k$. The result in \cite{CCTT} is as follows.
\begin{theorem}\label{thm_cri2} Let $(\rho,u)$ be solutions to the Euler-Poisson-alignment model \eqref{h_CS_P}.

\begin{enumerate}
\item Attractive potential $\left(k>0\right)$: A unconditional finite-time
blow up of the solution will appear no matter the initial conditions. 
\item Repulsive potential $\left(k<0\right)$: As in the Euler-alignment
system, there are two different zones:
\end{enumerate}
\begin{itemize}
\item (Subcritical region) If $\partial_{x}u_{0}(x)<-\psi\star \rho_{0}(x)+\sigma_{+}(x)$
for all $x\in\mathbb{R}$, then the system has a global classical
solution. Here, $\sigma_{+}(x)=0$ whenever $\rho_{0}(x)=0$ and elsewhere
$\sigma_{+}(x)$ is the unique negative root of the equation 
\begin{equation}
\rho_{0}^{-1}(x)-\frac{1}{\psi_{M}^{2}}\left(2k+\frac{\psi_{M}\sigma_{+}(x)}{\rho_{0}(x)}-2ke^{\frac{\psi_{M}\sigma_{+}(x)}{2k \rho_{0}(x)}}\right)=0,\text{\ }\rho_{0}(x)>0.\label{eq:4.13}
\end{equation}
\item (Supercritical region) If there exists an x such that 
\[
\partial_{x}u_{0}(x)<-\psi\star \rho_{0}(x)+\sigma_{-}(x),\quad\sigma_{-}:=-\sqrt{-4k \rho_{0}(x)}.
\]
then the solution blows up in a finite time.
\end{itemize}
\end{theorem}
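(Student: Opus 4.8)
The plan is to reduce the PDE system \eqref{h_CS_P} to a family of scalar ODEs along the characteristics $\dot\eta=u$ and to isolate one quantity whose evolution is explicitly solvable. Writing the momentum equation in nonconservative form on $\mathrm{supp}\,\rho$ (dividing by $\rho$), setting $d:=\pa_x u$ and $p:=\psi\star\rho$, and differentiating in $x$, one uses $\pa_{xx}K\star\rho=2k\rho$ for $K(x)=k|x|$ to obtain, with $'$ the material derivative,
\[
\rho'=-\rho d,\qquad d'=-d^2-dp+I-2k\rho,\qquad p'=-I,
\]
where $I:=\int_\R \pa_x\psi(\eta-y)\lt(u(y)-u(\eta)\rt)\rho(y)\,dy$ is nonlocal. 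The crucial structural point is that $I$ cancels in the combination $G:=d+p$, giving the local relation $G'=-dG-2k\rho$.

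First I would extract an exact invariant. Substituting $d=-\rho'/\rho$ into $G'=-dG-2k\rho$ yields $(G/\rho)'=-2k$, hence
\[
G(t)=\rho(t)\lt(\frac{G_0}{\rho_0}-2kt\rt),\qquad A:=\frac{G_0}{\rho_0}=\frac{\pa_x u_0+\psi\star\rho_0}{\rho_0}.
\]
Since $d=G-p=\rho(A-2kt)-p$, the continuity equation becomes, for $y:=1/\rho$, the scalar linear ODE
\[
y'=(A-2kt)-p\,y,\qquad y(0)=\rho_0^{-1}.
\]
Finite-time blow-up (infinite negative slope of $u$ together with divergence of $\rho$ at the same location) corresponds exactly to $y\to0^+$ in finite time, while $y$ bounded away from $0$ gives global regularity. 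The whole dichotomy is thus read off this ODE, whose only remaining nonlocal input is $p=\psi\star\rho\in[0,\psi_M]$, with $\psi_M$ the uniform bound on $\psi\star\rho$ furnished by mass conservation.

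Next I would sandwich $p$ between its bounds and use ODE comparison. For the blow-up statements I use $p\ge0$, so $y'\le A-2kt$ and $y(t)\le \rho_0^{-1}+At-kt^2=:\phi(t)$. When $k>0$ the bound $\phi\to-\infty$, forcing $y$ to vanish in finite time for every datum, which is the unconditional blow-up of case (1). When $k<0$, $\phi$ is an upward parabola with $\phi(0)>0$ and vertex at positive time; it becomes negative precisely when $A<-\sqrt{-4k/\rho_0}$, i.e. $G_0<-\sqrt{-4k\rho_0}=\sigma_-$, in which case $y\le\phi$ must hit $0$ and the solution blows up, recovering the supercritical threshold $\sigma_-$.

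For global existence I use instead $p\le\psi_M$, giving $y'\ge(A-2kt)-\psi_M y$; comparison with $\bar y'=(A-2kt)-\psi_M\bar y$, $\bar y(0)=\rho_0^{-1}$, yields $y\ge\bar y$, so it suffices that $\bar y>0$ for all $t\ge0$. Solving this linear ODE explicitly (an affine function of $t$ plus a multiple of $e^{-\psi_M t}$), the borderline case is the tangency $\bar y(t_c)=\bar y'(t_c)=0$; eliminating the critical time $t_c=A/(2k)$ between these two equations reproduces exactly the transcendental relation \eqref{eq:4.13} defining $\sigma_+$, so that whenever $\pa_x u_0+\psi\star\rho_0$ stays above the threshold $\sigma_+$ one has $y\ge\bar y>0$ and a global solution (the subcritical region). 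The remaining packaging step converts a uniform lower bound $y\ge\delta>0$ into bounds on $\rho=1/y$ and $d=\rho(A-2kt)-p$ on finite time intervals, which with mass conservation and boundedness of the nonlocal forces continues the local solution for all time. I expect the main obstacle to be this subcritical case: both the explicit elimination producing \eqref{eq:4.13} and, more seriously, making the comparison uniform in the characteristic label up to the moving, degenerate boundary of $\mathrm{supp}\,\rho$, so as to upgrade pointwise positivity of $y$ into genuine global regularity of $(\rho,u)$.
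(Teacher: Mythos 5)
Your proposal is correct and is, in essence, the proof of the cited source \cite{CCTT} (the paper itself only quotes this theorem without proof): the cancellation of the nonlocal integral in $G=\partial_x u+\psi\star\rho$, the exact relation $(G/\rho)'=-2k$ along characteristics, the reduction to the scalar equation $y'=(A-2kt)-(\psi\star\rho)\,y$ for $y=1/\rho$ with $A=G_0/\rho_0$, and the two-sided comparison $0\le\psi\star\rho\le\psi_M$, which yields the unconditional blow-up for $k>0$, the supercritical threshold $\sigma_-=-\sqrt{-4k\rho_0}$ from the parabola $y_0+At-kt^2$, and---via the tangency condition at $t_c=A/(2k)$ for the constant-coefficient comparison ODE---exactly the transcendental relation \eqref{eq:4.13} defining $\sigma_+$. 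One remark: your argument (correctly, in agreement with \cite{CCTT} and with Corollary \ref{cor_cri}) gives global existence when $\partial_x u_0(x)>-\psi\star\rho_0(x)+\sigma_+(x)$, so the inequality sign ``$<$'' in the subcritical statement as transcribed in this survey is a typo, not an error in your proof.
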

Note that there is a gap between the thresholds in the case of repulsive potential due to the non-locality of the velocity alignment force. If we choose the constant communication function $\psi \equiv 1$, we can close this gap, see Corollary \ref{cor_cri}.

For the initial density and velocity for the numerical simulations, we take them as in Section \ref{subsec:euler-alig}: for each node $i=1,\cdots,n$,
\[
\rho_{i}(0)=\frac{1}{\gamma}\cos\left(\pi\,\frac{x_{i}(0)}{1.5}\right) \quad \mbox{and} \quad u_{i}(0)=-c\text{\,}\sin\left(\pi\,\frac{x_{i}(0)}{1.5}\right).
\]
Similarly as before, we change the values of $c$ to consider the subcritical and supercritical regions stated in Theorem \ref{thm_cri2}.

Fig. \ref{fig:eu-po-ali-k1} shows the evolution of density and velocity for the system \eqref{h_CS_P} with $c=0.4$ and $k=0.5$. As stated in Theorem \ref{thm_cri2}, the density is blowing up at a finite time in this case. It occurs when $t=1.0788$.
\begin{figure}
\subfloat[]{\protect\protect\includegraphics[scale=0.32]{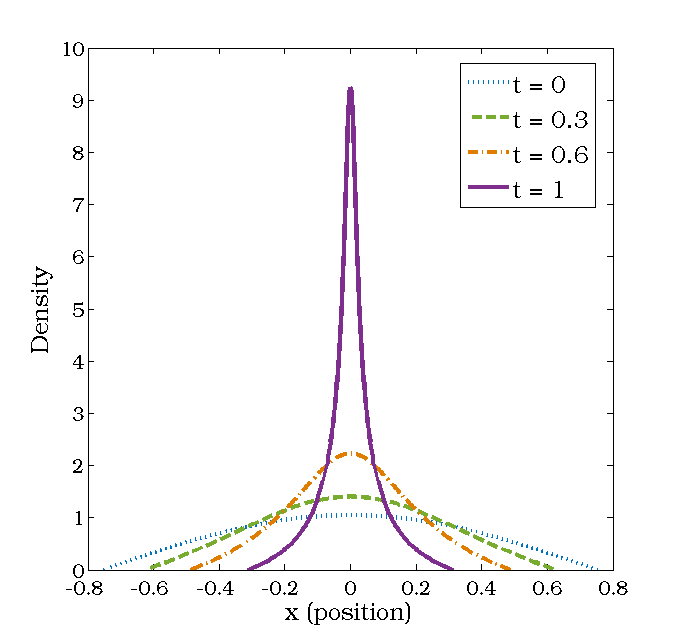}
\llap{\shortstack{%
        \includegraphics[scale=.1]{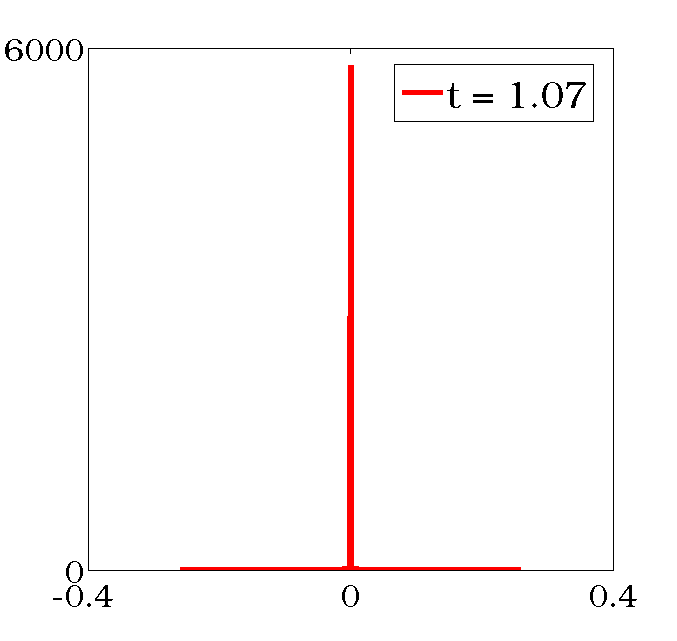}\\
        \rule{0ex}{1.25in}%
      }
  \rule{1.23in}{0ex}}
}\subfloat[]{\protect\protect\includegraphics[scale=0.32]{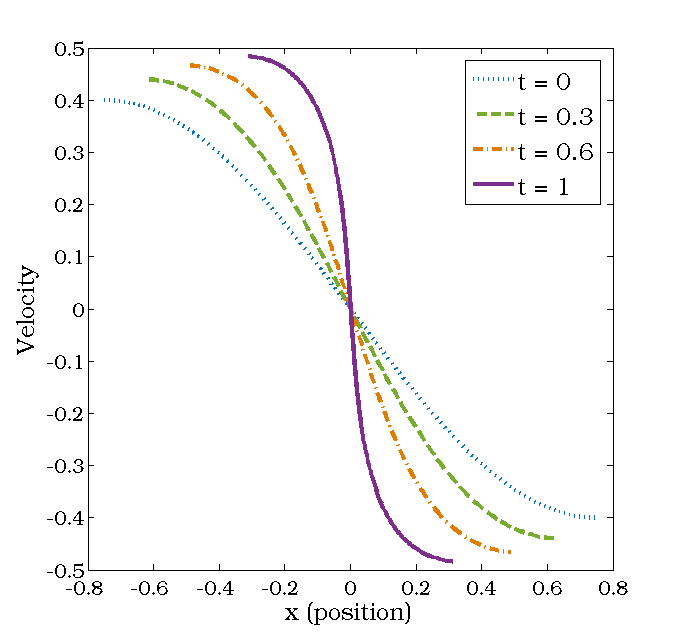}
\llap{\shortstack{%
        \includegraphics[scale=.1]{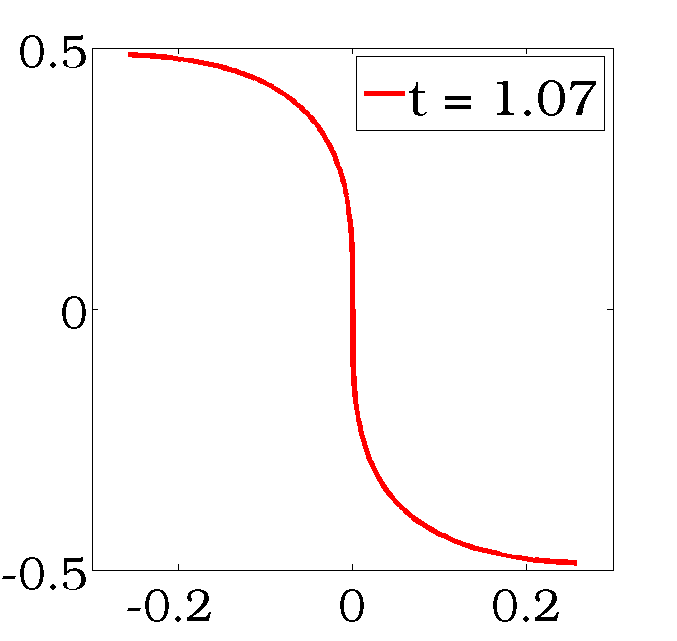}\\
        \rule{0ex}{0.23in}%
      }
  \rule{1.23in}{0ex}}
}
\protect\protect\caption{\label{fig:eu-po-ali-k1}Evolution of the density (a) and
velocity (b) for the system \eqref{h_CS_P} with $k=0.5$ and $c=0.4$.}
\end{figure}

For the repulsive potential case, we fix the value $k=-0.5$ and change the parameter $c = 0.95, 1.08, 1.2$ to consider both subcritical and supercritical regions described in Theorem \ref{thm_cri2}, see Fig. \ref{fig:eu-po.ali-IC}. To be more precise, when $c = 0.95$, $\partial_{x}u_{0}(x)+\psi\star\rho_{0}(x)$ lies in the subcritical region, for $c = 1.08$ it is between subcritical and supercritical regions. Thus it is not clear to have whether global regularity or finite-time blow-up of solutions for that case. When $c = 1.2$, it is inside the supercritical region and finite-time blowing up solution is expected from Theorem \ref{thm_cri2}. Since the initial density is independent of the parameter $c$, $\sigma_-$ and $\sigma_+$ are same for all three cases. 

A Trust-region with the Dogleg method is used in \eqref{eq:4.13} to obtain the value of $\sigma_+$. Basically it consists in a Newton-Raphson's method that applies a trust-region technique to improve robustness when starting far from the solution. Additionally, it introduces a procedure denoted as Powell dogleg. For more information about this method, see \cite{Pow}. In Matlab it could be easily implemented by the subroutine \textit{fsolve}.

\begin{figure}
\center \includegraphics[scale=0.4]{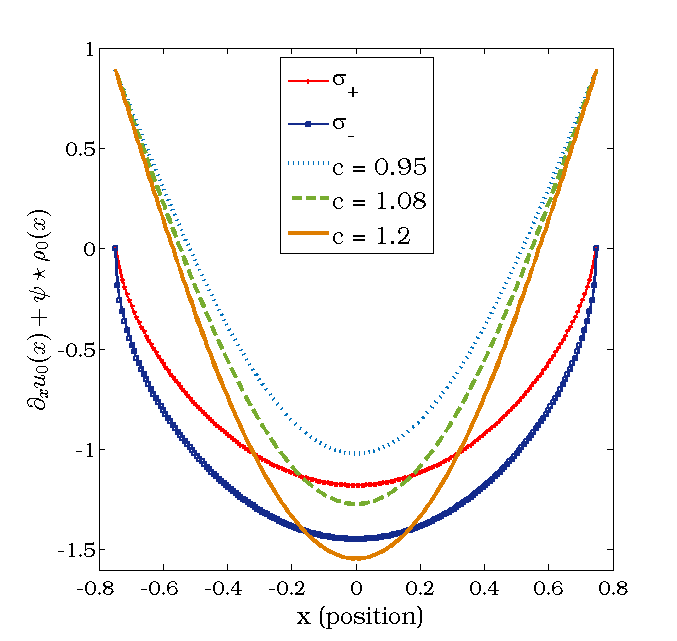}\protect\protect\caption{\label{fig:eu-po.ali-IC}Value of $\partial_{x}u_{0}(x)+\psi\star\rho_{0}(x)$
depending on $c$.}
\end{figure}

The numerical simulations of the density and the velocity for the case
$c=0.95$ at different times are shown in Fig. \ref{fig:eu-po-al-k-1-c095}. In the beginning, the density tends to be concentrated near the origin, but after some time the repulsive force changes the sign of slope of the velocity. Consequently, the density spreads and the size of support of the density is increasing. Thus there is no finite-time blow-up of solutions in this case, which is consistent with Theorem \ref{thm_cri2}. 

\begin{figure}[ht!]
\subfloat[]{\protect\protect\includegraphics[scale=0.32]{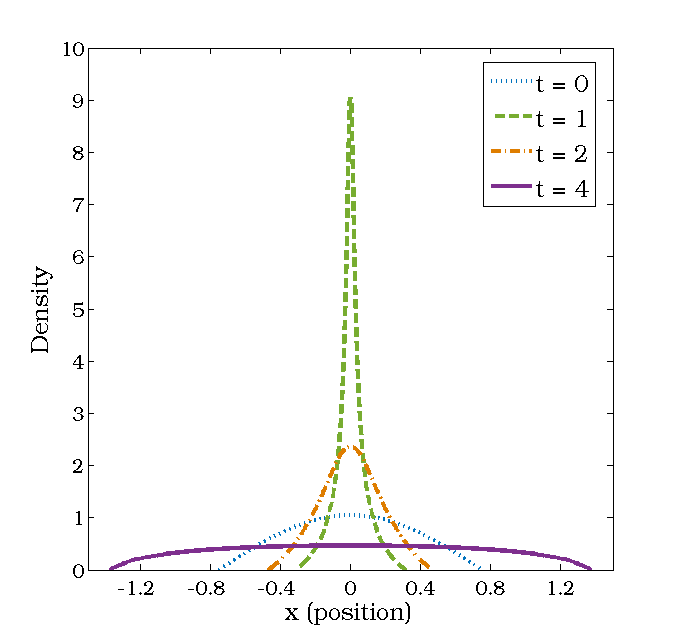}
}\subfloat[]{\protect\protect\includegraphics[scale=0.32]{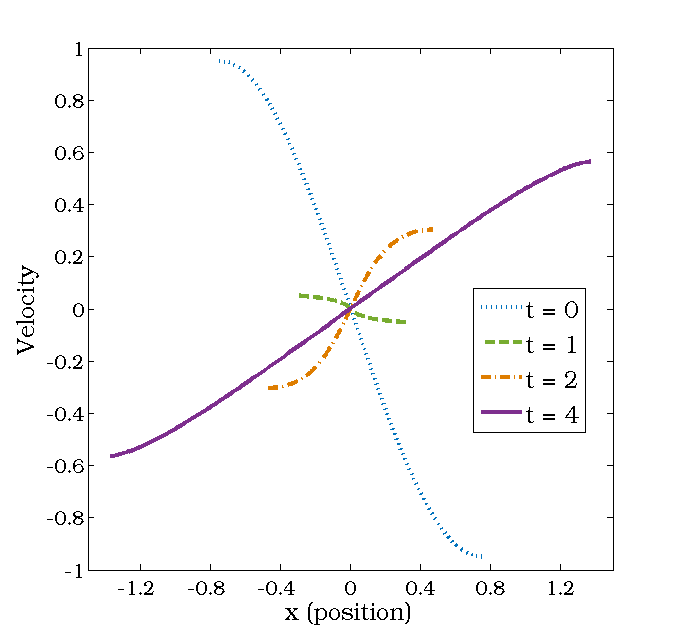}
}\protect\protect\caption{\label{fig:eu-po-al-k-1-c095}Numerical simulation of the density
(A) and velocity (B) for the case of $k=-0.5$ and $c=0.95$. }
\end{figure}

When $c=1.08$, some values of $\partial_{x}u_{0}(x)+\psi\star\rho_{0}(x)$ are between $\sigma_+(x)$ and $\sigma_-(x)$. We do not know analytically what to expect in this case. We numerically observe finite-time blow up in Fig. \ref{fig:eu-po-al-k-1-c108} after $t = 0.8107$. Our numerical exploration did not find initial data leading to global regularity when some values of $\partial_{x}u_{0}(x)+\psi\star\rho_{0}(x)$ are between $\sigma_+(x)$ and $\sigma_-(x)$.
It is an open problem to decide if dichotomy of coexistence of finite time blow-up and global existence can happen in this region.

\begin{figure}[ht!]
\subfloat[]{\protect\protect\includegraphics[scale=0.32]{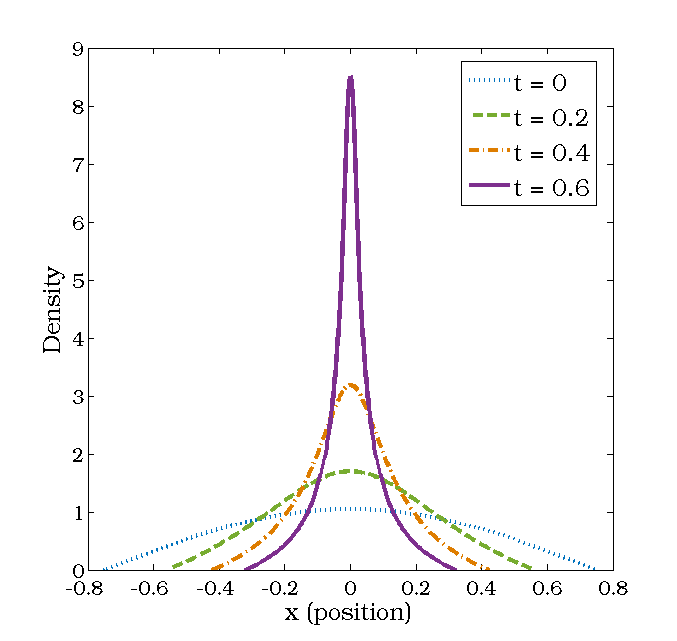}
\llap{\shortstack{%
        \includegraphics[scale=.1]{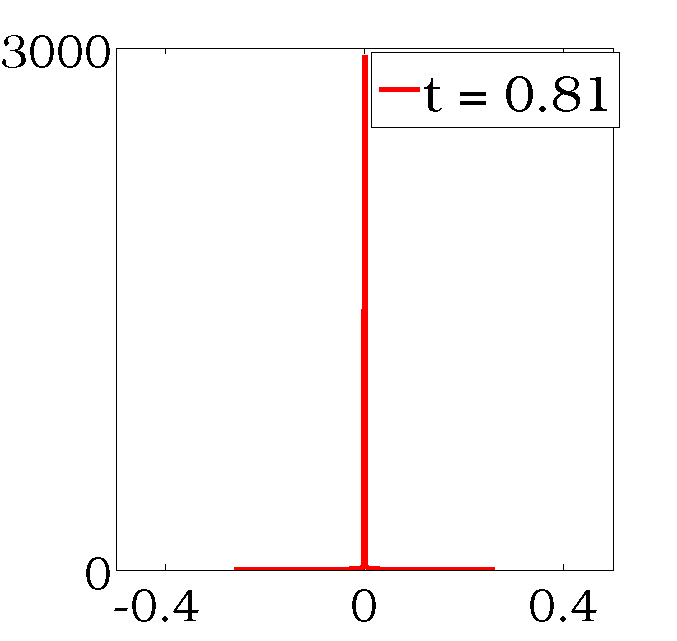}\\
        \rule{0ex}{1.25in}%
      }
  \rule{1.23in}{0ex}}
}\subfloat[]{\protect\protect\includegraphics[scale=0.32]{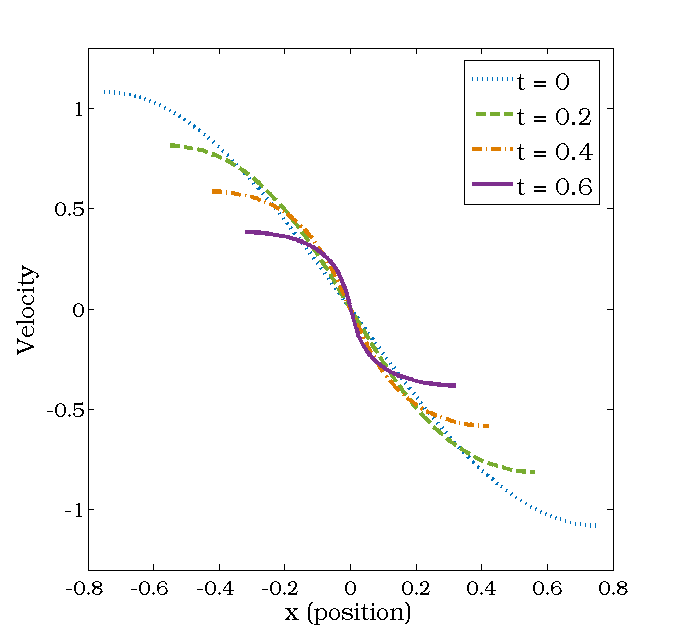}
\llap{\shortstack{%
        \includegraphics[scale=.1]{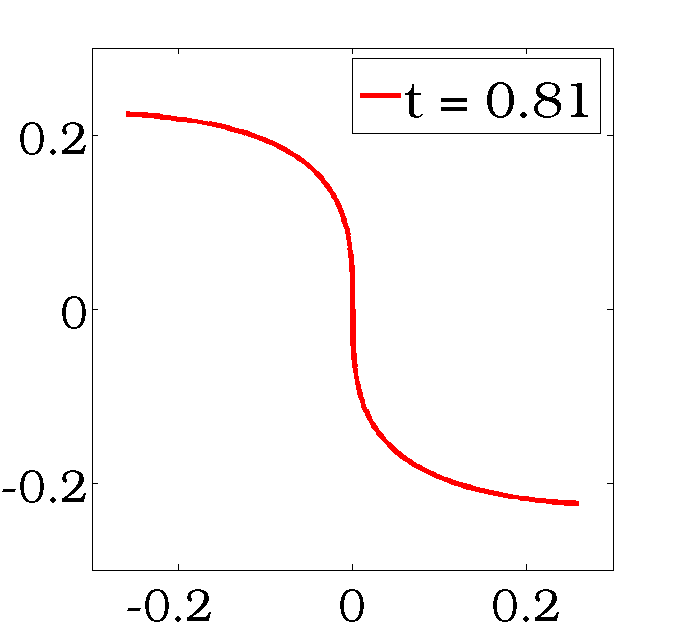}\\
        \rule{0ex}{0.23in}%
      }
  \rule{1.23in}{0ex}}
}
\protect\protect\caption{\label{fig:eu-po-al-k-1-c108}Numerical simulation of the density
(A) and velocity (B) for the case of $k=-0.5$ and $c=1.08$.}
\end{figure}

Finally, when $c=1.2$, the initial conditions are inside the supercritical region, and it is expected from Theorem \ref{thm_cri2} the blow-up of solutions in finite-time. The numerical simulations in this case show that indeed, see Fig. \ref{fig:eu-po-al-k-1-c12}. The repulsive force is not enough to prevent the blow up phenomena of the system \eqref{h_CS_P} with these initial data, and the blow-up is produced after $t = 0.6204$.

\begin{figure}[ht!]
\subfloat[]{\protect\protect\includegraphics[scale=0.32]{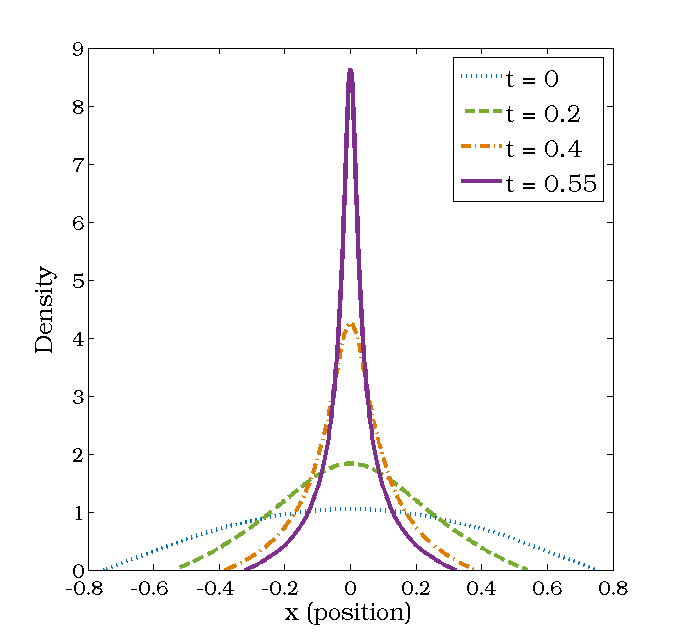}
\llap{\shortstack{%
        \includegraphics[scale=.1]{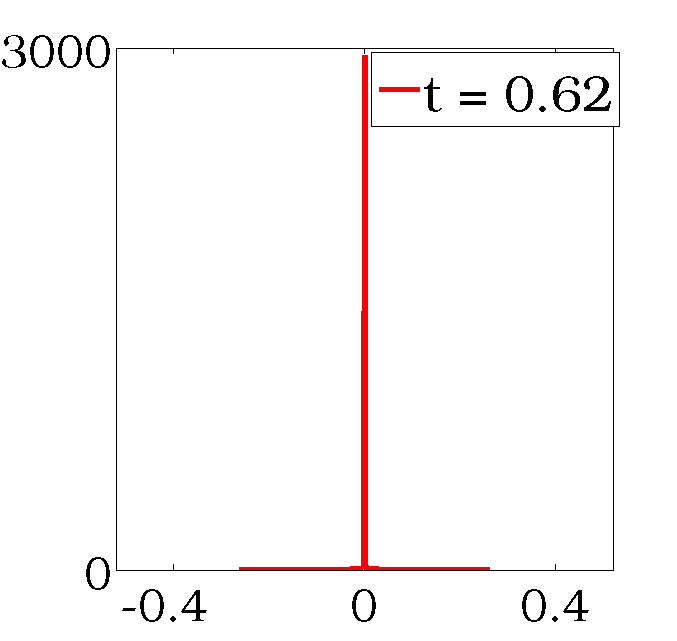}\\
        \rule{0ex}{1.25in}%
      }
  \rule{1.23in}{0ex}}
}\subfloat[]{\protect\protect\includegraphics[scale=0.32]{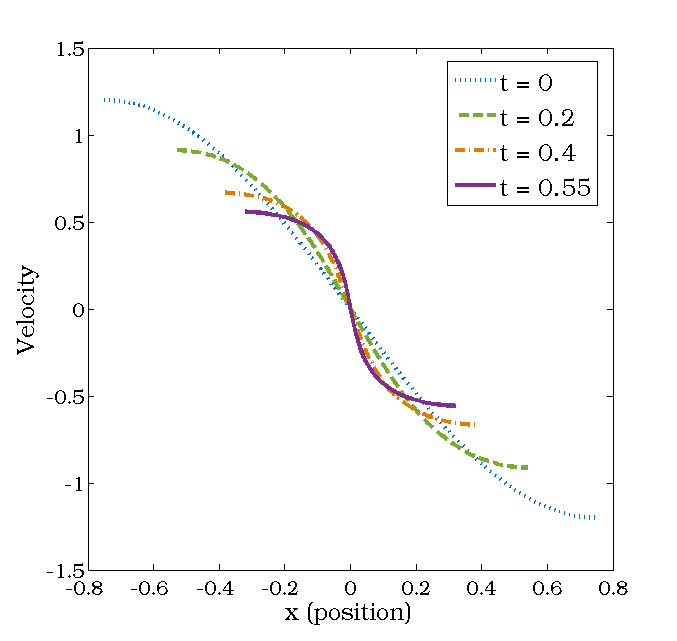}
\llap{\shortstack{%
        \includegraphics[scale=.1]{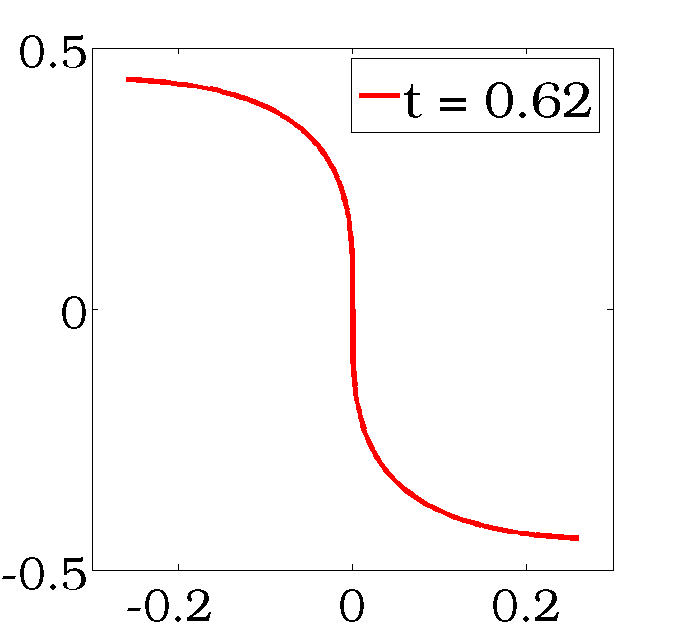}\\
        \rule{0ex}{0.23in}%
      }
  \rule{1.23in}{0ex}}
}\protect\caption{\label{fig:eu-po-al-k-1-c12}Numerical simulation of the density (A)
and velocity (B) for the case of $k=-0.5$ and $c=1.2$.}
\end{figure}

As mentioned before, the gap between thresholds occurs due to the non-locality of the velocity alignment force. In fact, if we choose the constant communication function, i.e., $\psi \equiv 1$ or $\beta = 0$, then we have a sharp critical threshold for the repulsive potential case.
\begin{corollary}\label{cor_cri} Let $(\rho,u)$ be solutions to the system \eqref{h_CS_P} with $\psi \equiv 1$ and $k < 0$.
\begin{itemize}
\item (Subcritical region) If $\partial_{x}u_{0}(x)>- \|\rho_{0}\|_{L^1} + \sigma(x)$
for all $x\in\mathbb{R}$, then the system has a global classical
solution. Here, $\sigma(x)=0$ whenever $\rho_{0}(x)=0$ and elsewhere
$\sigma(x)$ is the unique negative root of the equation 
\[
\rho_{0}^{-1}(x)-2k-\frac{\sigma(x)}{\rho_{0}(x)}+2ke^{\frac{\sigma(x)}{2k \rho_{0}(x)}}=0,\text{\ }\rho_{0}(x)>0.
\]
\item (Supercritical region) If there exists an x such that
$\partial_{x}u_{0}(x)<- \|\rho_{0}\|_{L^1}+\sigma(x)$, where the
value of $\sigma(x)$ is the one given in the $\mathit{subcritical\,region}$,
then the solution blows up in a finite time. 
\end{itemize}
\end{corollary}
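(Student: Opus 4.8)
The subcritical half of the statement is not really new: specializing Theorem~\ref{thm_cri2} to $\psi\equiv 1$ we have $\psi_M = \psi(0) = 1$ and $\psi\star\rho_0 \equiv \|\rho_0\|_{L^1}$, so its subcritical condition and the equation for $\sigma_+$ collapse to exactly the condition $\partial_x u_0 > -\|\rho_0\|_{L^1} + \sigma$ and the transcendental equation for $\sigma$ stated here. The real content is therefore to close the gap on the supercritical side, i.e.\ to upgrade the blow-up threshold from the crude $\sigma_-$ of Theorem~\ref{thm_cri2} to the sharp value $\sigma$. The plan is to exploit that, for $\psi\equiv 1$, the velocity-alignment force becomes \emph{local}, which turns the characteristic dynamics into an autonomous planar system that can be integrated in closed form.

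Concretely, along the flow $\dot\eta = u(\eta,t)$ I set $d := \pa_x u$ and track $(\rho,d)$. Since $\psi\equiv 1$, the nonlocal term is $\psi\star\rho \equiv \|\rho_0\|_{L^1} =: m$, a conserved constant (total mass), and for $K=k|x|$ one has $\pa_x K\star\rho = k(2M - m)$ with $M(x)=\int_{-\infty}^x\rho$, so that $\pa_x(\pa_x K\star\rho) = 2k\rho$. Differentiating the momentum equation of \eqref{h_CS_P} in $x$ and using the continuity equation, I obtain along characteristics the closed system
\begin{equation*}
\dot d = -d^2 - m d - 2k\rho, \qquad \dot\rho = -\rho\,d .
\end{equation*}
The key observation is that the rescaled quantity $\xi := (d+m)/\rho$ obeys $\dot\xi = -2k$, a \emph{constant}; hence $\xi(t) = \xi_0 - 2kt$ is known explicitly. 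Feeding this into $z := 1/\rho$, which solves the \emph{linear} equation $\dot z = -m z + \xi(t)$, I can write $z(t)$ in closed form as a sum of an $e^{-mt}$ term, a term linear in $t$, and constants.

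Blow-up of the solution corresponds precisely to $\rho\to+\infty$, i.e.\ to $z(t)$ reaching $0$ in finite time (and then $d = \xi/z - m\to -\infty$, since $\xi$ is negative at a first downward crossing, matching the ``infinite negative slope with diverging density'' description). Because $z(t)\to+\infty$ as $t\to\infty$, the sharp threshold between ``$z$ stays positive'' (global existence) and ``$z$ hits $0$'' (blow-up) is the \emph{tangency} case $z(t^*)=\dot z(t^*)=0$. Imposing these two conditions on the explicit $z(t)$ pins down the touching time $t^* = \xi_0/(2k)$ and then a single transcendental relation between $\xi_0$ and $\rho_0$; substituting $z_0=1/\rho_0$ and the separatrix value $\xi_0 = \sigma/\rho_0$ (with the total mass normalized to one, as in the paper's simulations, so that $m=1$) reproduces exactly the displayed equation for $\sigma$. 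A one-line monotonicity check, $\partial z/\partial\xi_0 = (1-e^{-mt})/m\ge 0$ for $t\ge 0$, then shows that $\partial_x u_0 > -m+\sigma$ at every point keeps $z>0$ for all $t$ (global classical solution), while $\partial_x u_0 < -m+\sigma$ somewhere forces a finite-time zero of $z$ (blow-up), giving the sharp dichotomy.

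The main obstacle is conceptual rather than computational: the whole reduction hinges on $\psi$ being constant, since only then is $\psi\star\rho$ a genuine constant and the planar system autonomous; for non-constant $\psi$ the alignment term couples different characteristics nonlocally and the invariant $\xi$ is lost, which is precisely why the gap $[\sigma_-,\sigma_+]$ cannot be closed in general. Two technical points still need care. First, one must justify that the pointwise ODE analysis along characteristics detects the \emph{first} PDE singularity, i.e.\ that characteristics do not cross earlier and that the earliest zero of $z$ among all Lagrangian labels is the actual blow-up time; this is standard for these one-dimensional pressureless alignment systems but should be stated. Second, the degenerate points $\rho_0(x)=0$ (where $\sigma=0$ by convention) must be treated separately, since the changes of variables $z=1/\rho$ and $\xi=(d+m)/\rho$ are singular there.
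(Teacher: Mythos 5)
Your proof is correct and is essentially the argument underlying the paper's statement: the paper itself gives no proof (the corollary is quoted from \cite{CCTT}, with only the remark that constant $\psi$ closes the gap), and the analysis in that source is exactly your reduction — along characteristics the system becomes the closed planar ODE $\dot\rho=-\rho d$, $\dot d=-d^2-md-2k\rho$, whose exact integration (your invariant $\xi=(d+m)/\rho$ with $\dot\xi=-2k$ and the linear equation for $z=1/\rho$) yields the tangency/separatrix relation, which is precisely \eqref{eq:4.13} with $\psi_M=1$, now sharp on both sides because $\psi\star\rho$ is exactly the total mass rather than merely bounded by $\psi_M$. Your two flagged caveats (ODE-to-PDE blow-up equivalence and the vacuum points $\rho_0(x)=0$) and your observation that the displayed equation for $\sigma$ presupposes the normalization $\|\rho_0\|_{L^1}=1$ inherited from \cite{CCTT} are all accurate.
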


This particular case has also been checked to validate our code. In fact, our code captures the dichotomy in this case quite nicely leading to similar simulations as in the case of non constant communication function. 

\subsubsection{Euler-Poisson-alignment system with power-law potential}\label{sec_EPAP}

In this part, we consider the repulsive Newtonian potential confined by a quadratic attractive potential:
\begin{equation}\label{pot_np}
K(x) = -\frac{|x|}{2} + \frac{x^2}{2}.
\end{equation}
For simplicity, we also consider a linear damping in the momentum equation instead of the nonlocal velocity alignment. In this situation, our main system reads as
\begin{equation}\label{h_np}
\begin{cases}
\partial_{t}\rho+\pa_x \left(\rho u\right)=0,\quad x\in\R,\quad t>0,\\[2mm]
{\displaystyle \pa_{t}(\rho u)+ \pa_x(\rho u^2)= -\rho u - \rho \lt(\pa_x K \star \rho\rt),}
\end{cases}
\end{equation}

If we take the constant communication function $\psi \equiv 1$ and assume the initial momentum is zero, i.e., $\int_\R \rho_0(x) u_0(x)\,dx = 0$, the system \eqref{h_np} can be derived from the system \eqref{h_CS} with the potential $K$ given in \eqref{pot_np}. 

Concerning the initial density for the numerical simulations, we again define it, for each node $i = 1,\cdots, n$, as
\[
\rho_{i}(0)=\frac{1}{\gamma}\cos\left(\pi\,\frac{x_{i}(0)}{1.7}\right),
\]
where the constant $\gamma$ is fixed so that the total mass $M_0 := \int_\R \rho_0\,dx$ has the required value. For the initial velocity, we choose
\[
u_{i}(0)=-c\text{\,}x_{i}(0)\quad\mbox{for each node } i=1,\cdots, n,
\]
where we again choose different constants $c  > 0$ to deal with the subcritical and supercritical regions.

For the system \eqref{h_np} with the potential $K$ given in \eqref{pot_np}, the sharp critical thresholds are classified in \cite{CCZ} according to the size of initial mass $M_0$. Time-asymptotic behaviors of density and velocity are also studied. The results in \cite{CCZ} for the case $M_0 < 1/4$ are as follows. 

\begin{theorem}\label{thm_crinew}
Let $(\rho,u)$ be a classical solution to the system \eqref{h_np} with the potential \eqref{pot_np}. Suppose that the initial density is compactly supported with the initial mass satisfying $M_0 <1/4$. Then the solution blows up in finite time if and only if there exists a $x^* \in \Omega_0 :=$ supp$(\rho_0)$ such that
\[
\partial_x u_0(x^*)<0,\quad M_0 - \rho_0(x^*)<\lambda_1\partial_x u_0 (x^*)
\]
and
\[
\rho_0(x^*)\leq \left(\lambda_1 \partial_x u_0 (x^*) + \rho_0(x^*)-M_0\right)^{-\lambda_2/\sqrt{\Xi}} \left(\lambda_2 \partial_x u_0 (x^*) + \rho_0(x^*)-M_0\right)^{\lambda_1\sqrt{\Xi}}.
\]
Here the constants $\lambda_i<0, i=1,2$ and $\Xi > 0$ are given as
\[
\lambda_1 \coloneqq \dfrac{-1+\sqrt{1-4M_0}}{2},\quad\lambda_2 \coloneqq \dfrac{-1-\sqrt{1-4M_0}}{2},\quad \textrm{and}\quad \Xi := 1-4M_0.
\]
Furthermore, if there is no finite-time blow-up, we have
\[
\rho(x,t) \to M_0 \mathbf{1}_{[a,b]} \quad \mbox{in } L^1 \quad \mbox{and} \quad u(x,t) \to 0, \quad \mbox{in } L^\infty \quad \mbox{as} \quad t \to \infty,
\]
exponentially fast, where $a,b$ are constants given by
$$\begin{aligned}
b &=  \frac{1}{M_0}\lt(\int_\R x \rho_0(x)\,dx + \int_\R (\rho_0 u_0)(x)\,dx \rt) + \frac12,\cr
a &=  \frac{1}{M_0}\lt(\int_\R x \rho_0(x)\,dx + \int_\R (\rho_0 u_0)(x)\,dx \rt) - \frac12.
\end{aligned}$$
\end{theorem}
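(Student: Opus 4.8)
The plan is to pass to the Lagrangian frame of the characteristics \eqref{eq:char} and reduce the PDE \eqref{h_np} to a closed planar ODE for $(\rho,d)$, where $d:=\pa_x u$ evaluated along a characteristic. Writing the momentum equation in nonconservative form $\pa_t u + u\pa_x u = -u - \pa_x K\star\rho$ and differentiating in $x$, the material derivative of $d$ satisfies $\dot d + d^2 = -d - \pa_x(\pa_x K\star\rho)$. For the potential \eqref{pot_np} one has $K''=1-\delta_0$, hence $\pa_x(\pa_x K\star\rho)=(1-\delta_0)\star\rho = M_0-\rho$, where $M_0=\int_\R\rho\,dx$ is conserved by the continuity equation. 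Combined with $\dot\rho=-\rho d$, this gives the autonomous system
\[
\dot\rho=-\rho d,\qquad \dot d=-d^2-d+\rho-M_0,
\]
carried by every characteristic with the same constant $M_0$. A finite-time blow-up of \eqref{h_np} is then precisely a characteristic along which $d\to-\infty$ (equivalently $\rho\to+\infty$) in finite time.

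Next I would study this system in the half-plane $\rho\ge 0$. Its equilibria are the stable node $(M_0,0)$, the saddle $(0,\lambda_1)$, and the unstable node $(0,\lambda_2)$, where $\lambda_{1,2}$ are the roots of $\lambda^2+\lambda+M_0=0$; these are real and distinct exactly when $M_0<1/4$, which is where the hypothesis enters, and the linearization at $(M_0,0)$ has eigenvalues $\lambda_1,\lambda_2<0$, matching the asserted exponential relaxation. The blow-up set is the basin from which trajectories escape to $d=-\infty$, separated from the basin of $(M_0,0)$ by a single separatrix trajectory.

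The crux is to integrate the system explicitly. Setting $A:=\lambda_1 d+\rho-M_0$ and $B:=\lambda_2 d+\rho-M_0$, a short computation using $\lambda_i^2+\lambda_i+M_0=0$ reveals the multiplicative structure $\dot A=(\lambda_1-d)A$, $\dot B=(\lambda_2-d)B$, together with $\dot\rho/\rho=-d$. Subtracting eliminates $d$, giving $\frac{d}{dt}\log(A/\rho)=\lambda_1$ and $\frac{d}{dt}\log(B/\rho)=\lambda_2$, and eliminating $t$ yields the first integral $H:=A^{\lambda_2}B^{-\lambda_1}\rho^{\sqrt{\Xi}}$ with $\Xi=1-4M_0$. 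The conditions $\pa_x u_0(x^*)<0$ and $M_0-\rho_0(x^*)<\lambda_1\pa_x u_0(x^*)$ are exactly $d_0<0$ and $A_0>0$, which then force $B_0>A_0>0$ so that $H$ is real and we sit on the correct branch; the separatrix is the level set of $H$ through the saddle, and solving $H=\mathrm{const}$ for $\rho$ reproduces precisely the displayed algebraic threshold, the point lying on the blow-up side when that inequality holds. Sufficiency then follows from a Riccati comparison $\dot d\le -d^2+\text{l.o.t.}$ on the invariant blow-up region, and sharpness from the fact that every characteristic not entering it converges to $(M_0,0)$. I expect the delicate points to be, first, pinning down the separatrix constant and treating the free boundary where $\rho_0=0$, and second, upgrading the ODE dichotomy into a genuine global classical solution of the PDE, i.e. showing that a uniform lower bound on $\pa_x u$ (equivalently $\pa_x\eta>0$, no crossing of characteristics) propagates all higher regularity.

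Finally, for the large-time profile in the global case I would use the macroscopic conservation laws. Mass conservation gives $|b-a|=1$ since the limiting density is $M_0\mathbf{1}_{[a,b]}$. For the centre, $\frac{d}{dt}\int_\R x\rho\,dx=\int_\R\rho u\,dx=:P(t)$, while the total momentum obeys $\frac{dP}{dt}=-P$, because the damping contributes $-P$ and the interaction term $\int_\R\int_\R \pa_x K(x-y)\rho(x)\rho(y)\,dx\,dy$ vanishes by antisymmetry of $\pa_x K$ (the potential $K$ is even). Hence $P(t)=P(0)e^{-t}$, so $\int_\R x\rho\,dx\to\int_\R x\rho_0\,dx+\int_\R\rho_0 u_0\,dx$, which fixes $(a+b)/2$ and gives exactly the stated $a,b$; the exponential rate of convergence comes from the eigenvalues $\lambda_1,\lambda_2<0$ at the node together with the $e^{-t}$ decay of $P$.
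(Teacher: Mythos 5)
You should know at the outset that the survey does not prove Theorem \ref{thm_crinew}: it quotes it from \cite{CCZ}, and the only related in-paper material is the characteristic system \eqref{h_np_blowup} together with the non-sharp sufficient blow-up condition of Theorem \ref{thm_blowupnew} (proved there by the substitution $\beta=d/\rho$ and a one-sided comparison). Your route --- Lagrangian reduction to a planar ODE followed by exact integration --- is precisely the route of \cite{CCZ}, and most of your steps are correct. In particular your closed system $\dot\rho=-\rho d$, $\dot d=-d^2-d+\rho-M_0$ is the one consistent with the potential \eqref{pot_np} and with the stated asymptotic profile $M_0\mathbf{1}_{[a,b]}$, $b-a=1$ (the survey's own display \eqref{h_np_blowup}, which carries $2\rho-M_0$, corresponds to a Newtonian coefficient $-|x|$ rather than $-|x|/2$ and is inconsistent with \eqref{pot_np}). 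The key algebraic observation, that $A:=\lambda_1 d+\rho-M_0$ and $B:=\lambda_2 d+\rho-M_0$ obey $\dot A=(\lambda_1-d)A$ and $\dot B=(\lambda_2-d)B$, is exactly right, and it yields more than a first integral: it gives the explicit solution $A/\rho=(A_0/\rho_0)e^{\lambda_1 t}$, $B/\rho=(B_0/\rho_0)e^{\lambda_2 t}$. Your momentum/center-of-mass argument identifying $a$ and $b$ is also correct.

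The genuine gap is your identification of the critical constant: the separatrix is \emph{not} the level set of $H=A^{\lambda_2}B^{-\lambda_1}\rho^{\sqrt{\Xi}}$ through the saddle, and that prescription would give the wrong threshold. At the saddle $(0,\lambda_1)$ one has $B=0$, $\rho=0$ and $A=\lambda_1^2-M_0=\tfrac{1}{2}\sqrt{\Xi}\left(\sqrt{\Xi}-1\right)<0$, so $H$ degenerates there (indeed $A^{\lambda_2}$ is not even real for $A<0$); moreover the stable manifold of the saddle is the vacuum axis $\{\rho=0\}$ and its unstable manifold is the invariant line $\{B=0\}$, both of which lie outside the region $\{A>0,\,B>0,\,\rho>0\}$ where the dichotomy is decided, so no characteristic with $\rho_0>0$ limits onto the saddle. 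The constant must instead be pinned down from your own explicit solution: since $\lambda_1 B-\lambda_2 A=\sqrt{\Xi}\,(\rho-M_0)$, one gets
\[
\rho(t)=\frac{M_0\sqrt{\Xi}}{g(t)},\qquad g(t):=\sqrt{\Xi}+\lambda_2\frac{A_0}{\rho_0}e^{\lambda_1 t}-\lambda_1\frac{B_0}{\rho_0}e^{\lambda_2 t},
\]
and blow-up along the characteristic is equivalent to $g$ vanishing in finite time. If $d_0\geq 0$ or $A_0\leq 0$ one checks $\inf_{t\geq 0} g\geq \min\left(g(0),\sqrt{\Xi}\right)>0$, which gives the necessity direction; if $d_0<0$ and $A_0>0$ (so $B_0>A_0>0$), then $g$ has a unique interior minimum at $e^{\sqrt{\Xi}\,t^*}=B_0/A_0>1$, and using $1-\lambda_1/\sqrt{\Xi}=-\lambda_2/\sqrt{\Xi}$ and $1+\lambda_2/\sqrt{\Xi}=\lambda_1/\sqrt{\Xi}$,
\[
g(t^*)=\sqrt{\Xi}\left(1-\rho_0^{-1}A_0^{-\lambda_2/\sqrt{\Xi}}B_0^{\lambda_1/\sqrt{\Xi}}\right),
\]
so blow-up holds iff $\rho_0\leq A_0^{-\lambda_2/\sqrt{\Xi}}B_0^{\lambda_1/\sqrt{\Xi}}$, i.e.\ iff $H\leq 1$: the critical level is $H=1$, not the saddle level. (This computation also reveals that the exponent $\lambda_1\sqrt{\Xi}$ in the displayed statement is a typo for $\lambda_1/\sqrt{\Xi}$.) On vacuum characteristics, where $\rho_0=0$, the system reduces to $\dot d=-(d-\lambda_1)(d-\lambda_2)$, with blow-up iff $d_0<\lambda_2$, which is exactly what the theorem's conditions degenerate to, so the free boundary causes no trouble. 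With this replacement of the saddle argument, plus the continuation step you already flag (a lower bound on $d$ along all characteristics keeps $\partial_x\eta>0$ and propagates classical regularity), your outline becomes a complete proof.
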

We refer to \cite{CCZ} for the sharp critical thresholds and the large-time behavior for global-in-time solutions for $M_0 \geq 1/4$.

In order to check numerically the critical thresholds stated in Theorem \ref{thm_crinew}, two numerical simulations are carried out in Fig. \ref{fig:new_critthres}. First, the mass is set to be $M_0=0.2$, and then two cases $c = 0.9, 1.1$ are considered. Note that our initial conditions for $\rho_0$ and $u_0$ imply 
\[
\int_\R x \rho_0(x)\,dx = \int_\R (\rho_0 u_0)(x)\,dx = 0.
\]
When $c=0.9$, the initial data lie in the subs critical region, i.e., the initial data do not satisfy the conditions in Theorem \ref{thm_crinew}, and subsequently, the density and the velocity converge to $M_0 \mathbf{1}_{[-1/2,1/2]}$ and $0$ as time goes on, respectively.  On the other hand, for the case $c=1.1$, there is a finite time blow up caused by an infinite slope of the velocity on the boundary. No numerical simulation has been provided for the case of $M_0 \geq 1/4$ because the critical threshold established in \cite{CCZ} for that case involves a more involved requirement on the initial conditions. However, the dichotomy of behaviors obtained is similar.
\begin{figure}
\subfloat[]{
\protect\includegraphics[scale=0.32]{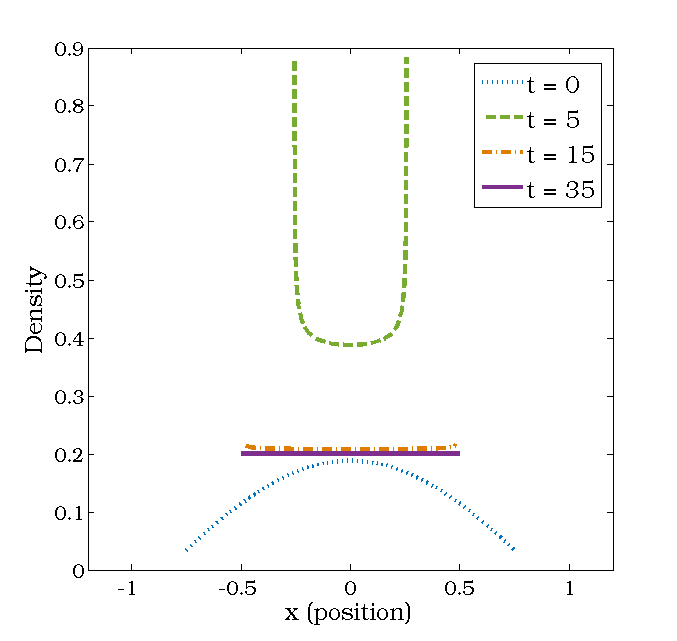}
\llap{\shortstack{%
        \includegraphics[scale=.093]{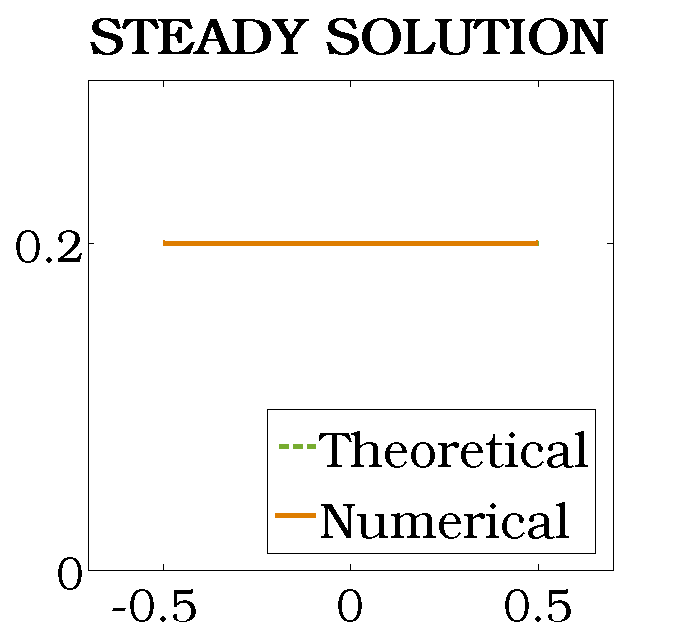}\\
        \rule{0ex}{1.28in}%
      }
  \rule{1.29in}{0ex}}
}\subfloat[]{\protect\includegraphics[scale=0.32]{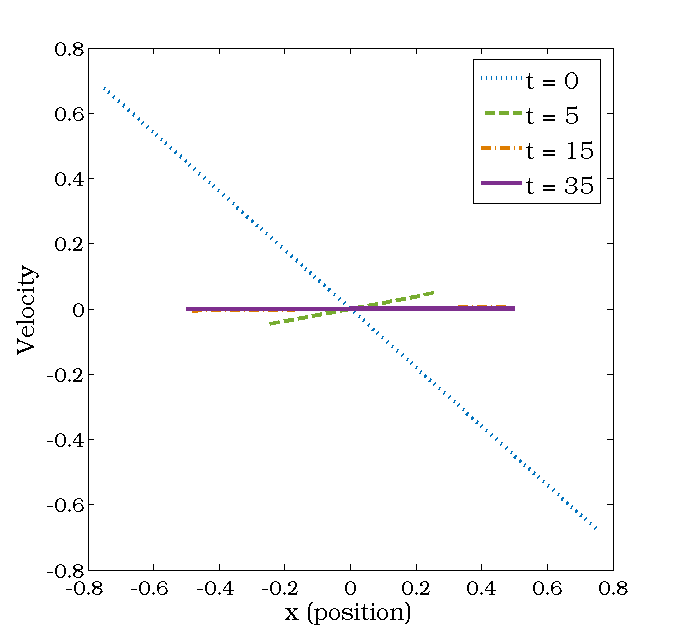}
}\newline
\subfloat[]{
\protect\includegraphics[scale=0.32]{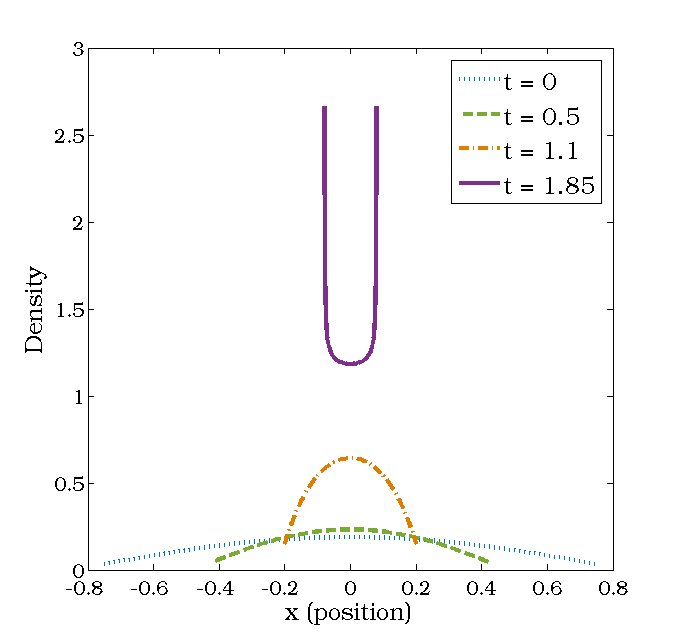}
}\subfloat[]{\protect\includegraphics[scale=0.32]{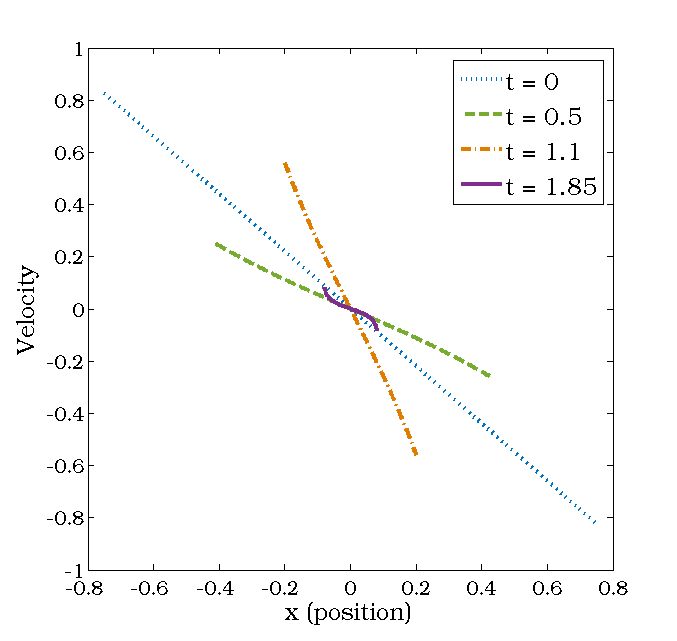}
\llap{\shortstack{%
        \includegraphics[scale=.1]{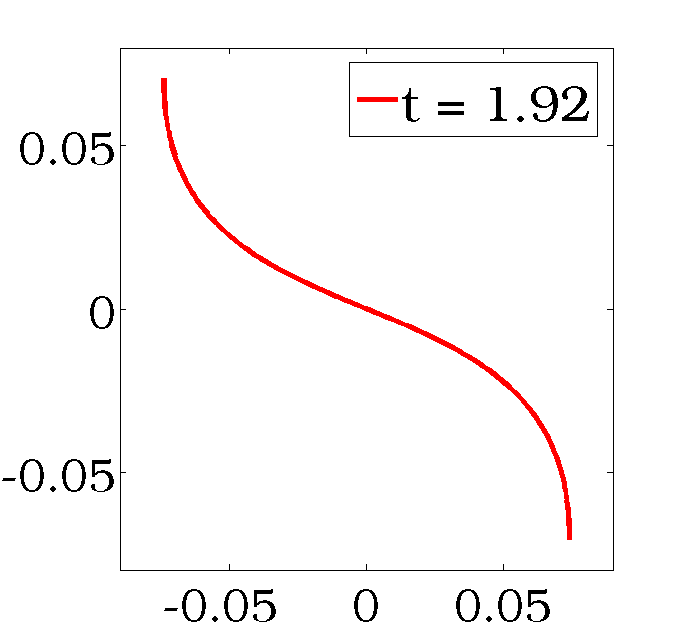}\\
        \rule{0ex}{0.23in}%
      }
  \rule{1.23in}{0ex}}
}\protect\caption{\label{fig:new_critthres} Numerical simulations with respect to Theorem \ref{thm_crinew}.- (a), (b): Time behavior of the density and the velocity for the case $c=0.9$. (c), (d): Time behavior of the density and the velocity for the case $c=1.1$.}
\end{figure}

By employing the argument proposed in \cite{CCTT}, we provide an estimate on the blow-up time for the system \eqref{h_np} with the potential \eqref{pot_np}. It is worth mentioning that this blow-up analysis does not depend on the size of mass. Differentiating the momentum equation of system \eqref{h_np} with respect to $x$, we can rewrite it as
\begin{equation}\label{h_np_blowup}
\begin{cases}
\rho' = - \rho d\\[2mm]
d' = -d^2 - d + 2\rho - M_0
\end{cases}
\end{equation}
where $\{ \}'$ denotes the time derivative along the characteristic flow $\eta$ defined in \eqref{eq:char} and $d := \pa_x u$.
\begin{theorem}\label{thm_blowupnew}
Let $(\rho,u)$ be a classical solution to the system \eqref{h_np} with the potential \eqref{pot_np} on the time interval $[0,T]$. Suppose that there exists an x such that 
\[
d_0(x) < 0 \quad \mbox{and}  \quad \frac{1 + d_0(x)}{\rho_0(x)} +2\log\left(1 - \frac{d_0(x)}{2\rho_0(x)}\right) \leq 0.
\]
Then the life span $T$ of the solution should be finite time and moreover
\[
T \leq 2\inf_{x \in \mathcal{S}}\log\left(1 - \frac{d_0(x)}{2\rho_0(x)}\right),
\]
where $\mathcal{S}$ is defined as
\[
\mathcal{S}:= \lt\{x \in \R\,:\, d_0(x) < 0 \quad \mbox{and}  \quad \frac{1 + d_0(x)}{\rho_0(x)} +2\log\left(1 - \frac{d_0(x)}{2\rho_0(x)}\right) \leq 0\rt\}.
\]
\end{theorem}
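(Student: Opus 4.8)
The plan is to work along the characteristic flow $\eta$ from \eqref{eq:char}, where the system \eqref{h_np} with potential \eqref{pot_np} reduces to the two ODEs \eqref{h_np_blowup} for $\rho$ and $d=\pa_x u$, with $M_0$ a constant of motion. My goal is to show that for every $x\in\mathcal S$ the density along that characteristic becomes infinite in finite time—equivalently $d\to-\infty$—so that no classical solution can survive; the life-span bound then follows by taking the infimum of the individual blow-up times over $\mathcal S$.

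The main device is to linearize by passing to the reciprocal density $s:=\rho^{-1}$ along the characteristic. From $\eqref{h_np_blowup}_1$ one gets $s'=d\,s$, and differentiating once more and inserting $\eqref{h_np_blowup}_2$ (together with $\rho s=1$) the quadratic terms cancel, leaving the linear constant-coefficient equation
\begin{equation*}
s''+s'+M_0\,s=2,\qquad s(0)=\frac{1}{\rho_0},\quad s'(0)=\frac{d_0}{\rho_0}.
\end{equation*}
Density blow-up is exactly the event $s\to 0^+$ in finite time: at the first zero $t^*$ of $s$ one has $\rho=s^{-1}\to\infty$ and, since $s$ is smooth and convex there ($s''(t^*)=2-s'(t^*)>0$), the ratio $d=s'/s\to-\infty$, a genuine gradient blow-up of $u$. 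As a consistency check, the roots of the characteristic polynomial $\lambda^2+\lambda+M_0$ are precisely the $\lambda_{1,2}$ of Theorem \ref{thm_crinew}.

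To obtain a \emph{mass-independent} criterion I would compare $s$ with the auxiliary function $\bar s$ solving $\bar s''+\bar s'=2$ with the same initial data. Setting $e:=\bar s-s$ gives $e''+e'=M_0 s$ with $e(0)=e'(0)=0$; since $M_0 s\ge 0$ while $s>0$, one finds $e'\ge 0$ and hence $s\le \bar s$ up to the first zero, so $s$ vanishes no later than $\bar s$. The gain is that $\bar s$ integrates explicitly,
\[
\bar s(t)=\frac{1}{\rho_0}+2t-2\Lambda(x)\bigl(1-e^{-t}\bigr),\qquad \Lambda(x):=1-\frac{d_0(x)}{2\rho_0(x)},
\]
and $\bar s$ attains its unique minimum at $t_\star=\log\Lambda(x)$ with value $\bar s(t_\star)=\frac{1+d_0}{\rho_0}+2\log\Lambda(x)$. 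Under the hypothesis defining $\mathcal S$ this minimum is $\le 0$, so $\bar s$—and therefore $s$—reaches $0$ at some time $\le \log\Lambda(x)\le 2\log\Lambda(x)$ (the last step because $d_0<0$ forces $\Lambda(x)>1$). Taking the infimum over $x\in\mathcal S$ yields both the finiteness of $T$ and the stated estimate.

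The main obstacle is the coupling term $+2\rho$ in $\eqref{h_np_blowup}_2$: because $\rho$ grows precisely when $d<0$, this repulsive–confining source opposes the Riccati mechanism $d'\approx -d^2$ and cannot simply be dropped in the original variables. The substitution $s=\rho^{-1}$ is what tames it, simultaneously linearizing the system and revealing that the mass enters only through the sign-definite term $M_0 s$, which can be discarded in the favorable direction; this is exactly why the resulting threshold and time bound are independent of $M_0$ (no restriction such as $M_0<1/4$ is needed). The only remaining points to verify carefully are that the comparison $s\le\bar s$ persists up to the first zero, and the transversality at that zero ensuring $d\to-\infty$ rather than a mere coordinate degeneracy—both of which follow from the strict convexity $s''(t^*)>0$ noted above.
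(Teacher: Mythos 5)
Your proof is correct and follows essentially the same route as the paper's: your reciprocal density $s=\rho^{-1}$ and comparison function $\bar s$ coincide exactly with the paper's $\rho^{-1}$ and its bounding function $f(t)$ (the paper's variable $\beta = d/\rho$ is precisely your $s'$), the mass term is discarded by the same sign argument, and your minimum analysis of $\bar s$ at $t_\star=\log\bigl(1-\tfrac{d_0(x)}{2\rho_0(x)}\bigr)$ reproduces the paper's critical point $t_*$ and threshold condition verbatim. Like the paper's own computation, you in fact obtain the sharper bound $T\le\inf_{x\in\mathcal S}\log\bigl(1-\tfrac{d_0(x)}{2\rho_0(x)}\bigr)$, which implies the stated estimate carrying the factor $2$.
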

\begin{proof} Set $\beta = d/\rho$. Then it follows from \eqref{h_np_blowup} that $\beta$ satisfies
\[
\beta' = \frac{1}{\rho^2}\lt(d' \rho - d\rho' \rt) = \frac{1}{\rho^2}\lt( -d\rho + 2\rho^2 - \rho M_0\rt) = -\beta + 2 - \frac{M_0}{\rho} \leq -\beta + 2.
\]
This yields
\[
\beta \leq 2 + (\beta_0 - 2)e^{-t} \quad \mbox{for} \quad t \geq 0.
\]
On the other hand, it follows from the continuity equation that
\[
\rho' = -\rho^2 \beta, \quad \mbox{i.e.,} \quad \rho^{-1} = \rho_0^{-1} + \int_0^t \beta(s)\,ds \leq \rho_0^{-1} + 2t + (\beta_0 - 2)(1 - e^{-t}).
\]
Set $f(t) := \rho_0^{-1} + 2t + (\beta_0 - 2)(1 - e^{-t})$, then 
\[
f_0 = \rho_0^{-1}>0 \quad \mbox{and} \quad \lim_{t \to +\infty}f(t) = \infty.
\]
On the other hand, $f'(t) =  2 + (\beta_0 - 2)e^{-t}$, thus if there exists a $t_* > 0$ such that $f'(t_*) = 0$ and $f(t_*) \leq 0$, then the density $\rho$ is blowing up until this time $t_* > 0$. Note that $f'(t_*) = 0$ implies $e^{-t_*} = 2/(2-\beta_0)$. This yields $\beta_0 < 0$, i.e., $d_0 < 0$ due to $e^{-t_*} \in (0,1)$. Then for $d_0 < 0$ we get
\[
f(t_*) = \rho_0^{-1} + \beta_0 + 2t_* = \rho_0^{-1} +\beta_0 +2\log\left(\frac{2-\beta_0}{2}\right).
\]
Hence if there exists a $x$ such that
\[
d_0(x) < 0 \quad \mbox{and}  \quad \rho_0^{-1}(x) +\beta_0(x) +2\log\left(\frac{2-\beta_0(x)}{2}\right) \leq 0,
\]
then the life-span $T$ of the solution $(\rho,u)$ should be finite. Furthermore, the time $T$ satisfies 
\[
T \leq 2\inf_{x \in \mathcal{S}}\log \lt(\frac{2-\beta_0(x)}{2}\rt).
\]
\end{proof}

A study concerning the qualitative properties of the dynamics of the system \eqref{h_np} with the potential \eqref{pot_np} are also conducted. Depending on the initial conditions, the density and position may converge to the steady state with or without oscillations around that state. In Fig. \ref{fig:new_comparison}, it is depicted how the density and position of the boundary nodes evolves depending on the initial mass or the initial velocity. Generally, it shows in Fig. \ref{fig:new_comparison} (a) and (c) that there is a limit mass below in which there are no oscillations. However, this limit mass may change with the initial velocity. With respect to the influence of the initial velocity for a fixed mass according to  Fig. \ref{fig:new_comparison} (b) and (d), it is possible to deduce that the more negative the initial slope of the velocity is, the larger the tendency to the oscilations is. 

\begin{figure}[ht!]
\subfloat[]{
\protect\includegraphics[scale=0.32]{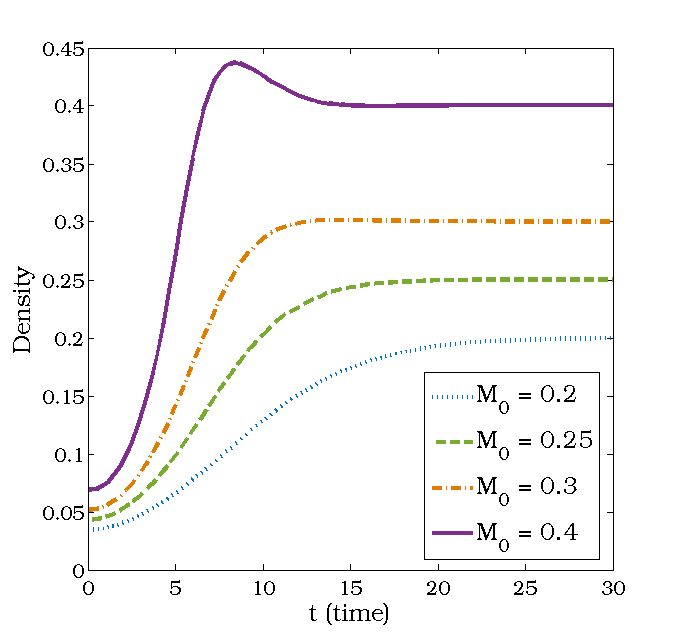}
}\subfloat[]{\protect\includegraphics[scale=0.32]{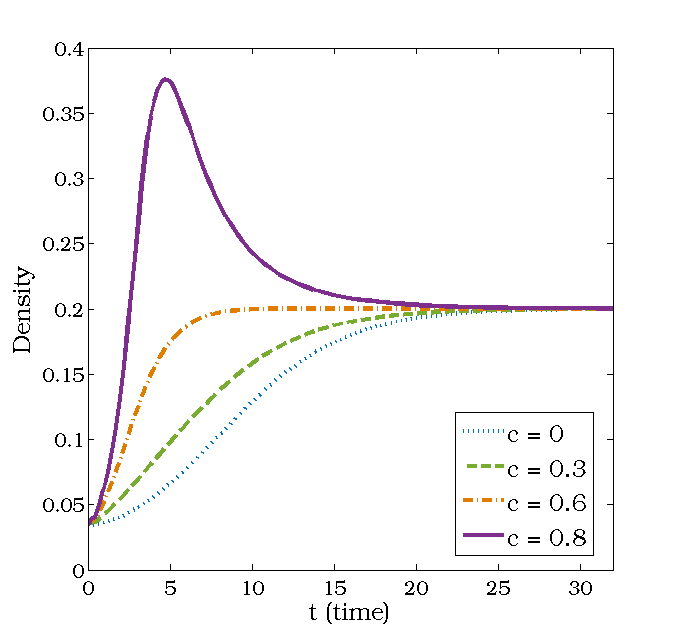}
}\newline
\subfloat[]{
\protect\includegraphics[scale=0.32]{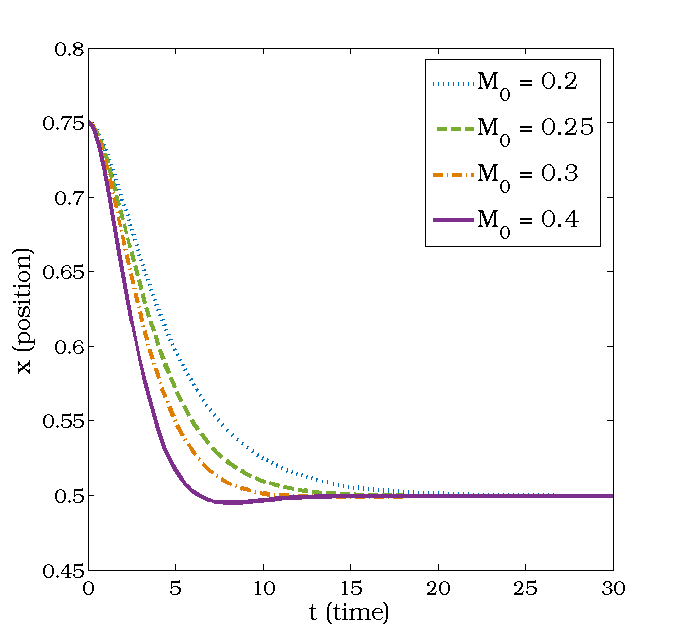}
}\subfloat[]{\protect\includegraphics[scale=0.32]{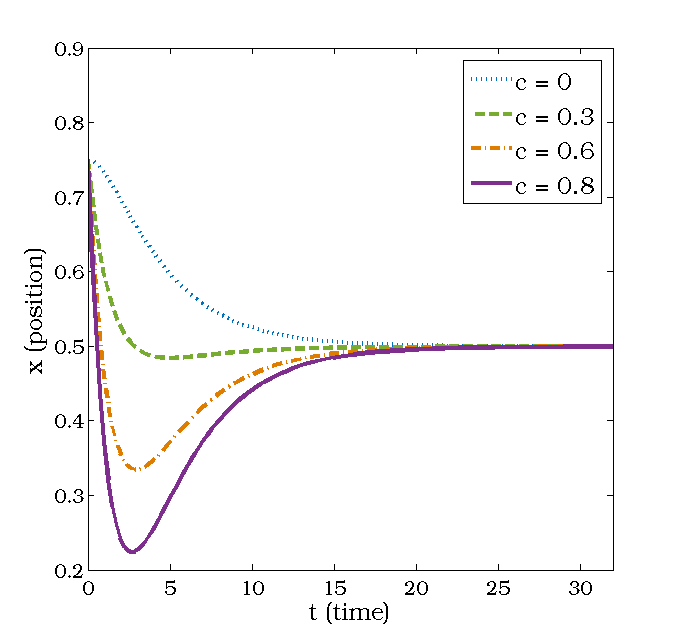}
}\protect\caption{\label{fig:new_comparison} Dynamical behavior of the covergence of boundary nodes.- (a): Density with different values of the mass $M_0$ and $c=0$. (b): Density with different values of the parameter $c$ and $M_0=0.2$. (c): Position with different values of the mass $M_0$ and $c=0$. (d): Position with different values of the parameter $c$ and $M_0=0.2$.}
\end{figure}

Finally, some numerical simulations with the CS nonlocal velocity alignment instead of linear damping are conducted. Although no theoretical result is known for that case, our numerical simulations demonstrate that the total mass of the system and the initial conditions affect the global behavior of the solution in similar way as in the linear damping case.
\begin{figure}[ht!]
\subfloat[]{
\protect\includegraphics[scale=0.32]{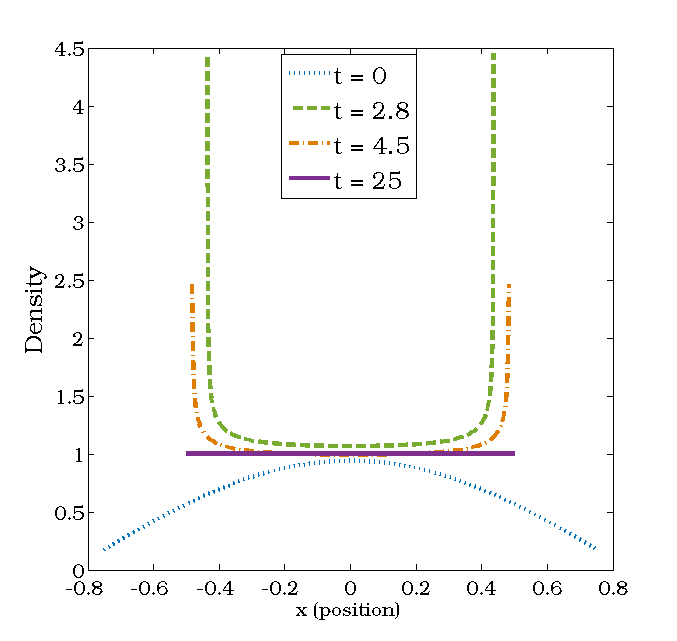}
\llap{\shortstack{%
        \includegraphics[scale=.1]{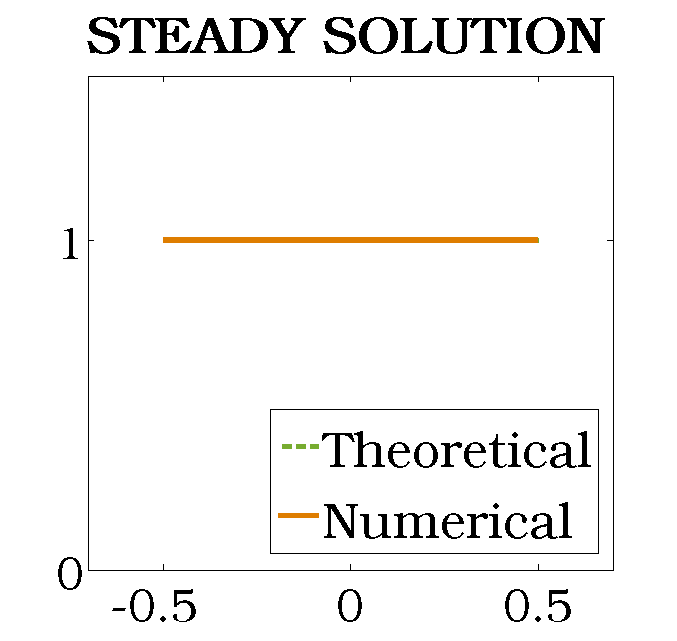}\\
        \rule{0ex}{0.7in}
      }
  \rule{0.75in}{0ex}}

}\subfloat[]{\protect\includegraphics[scale=0.32]{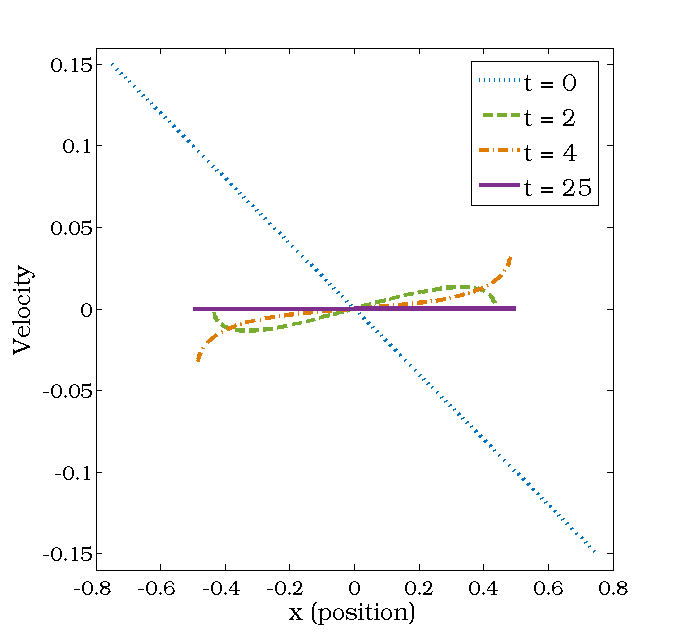}
}\newline
\subfloat[]{
\protect\includegraphics[scale=0.32]{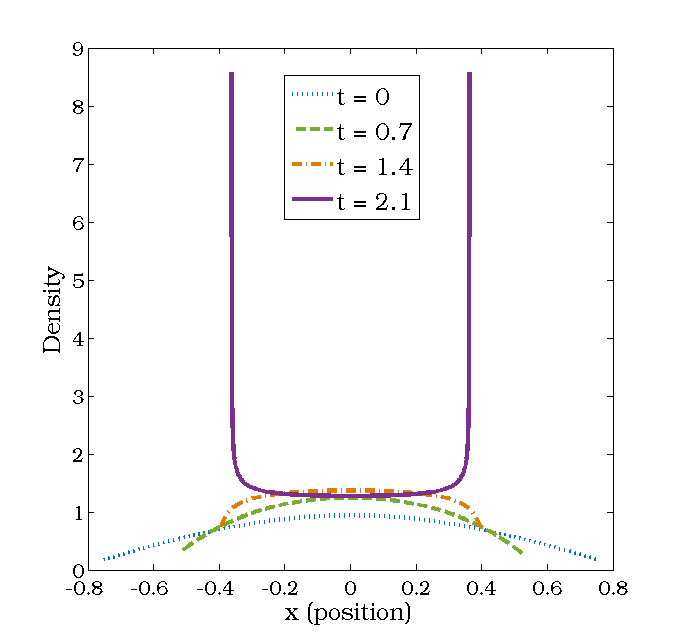}
}\subfloat[]{\protect\includegraphics[scale=0.32]{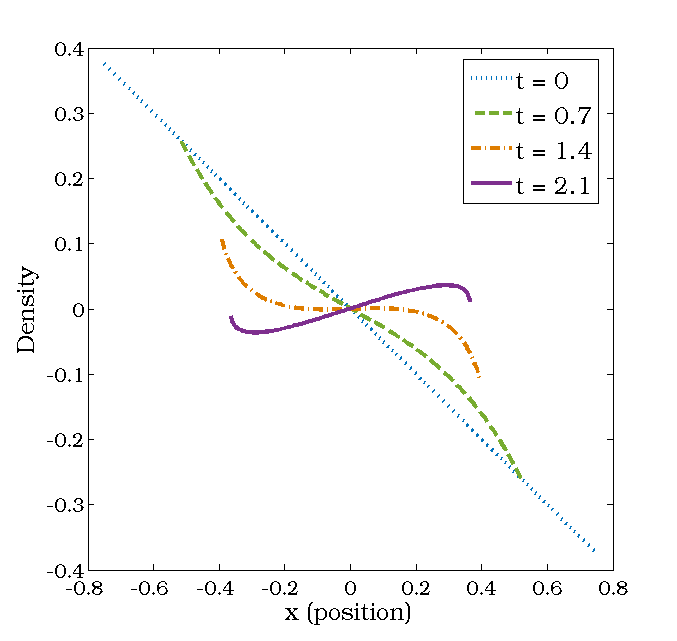}
\llap{\shortstack{%
        \includegraphics[scale=.1]{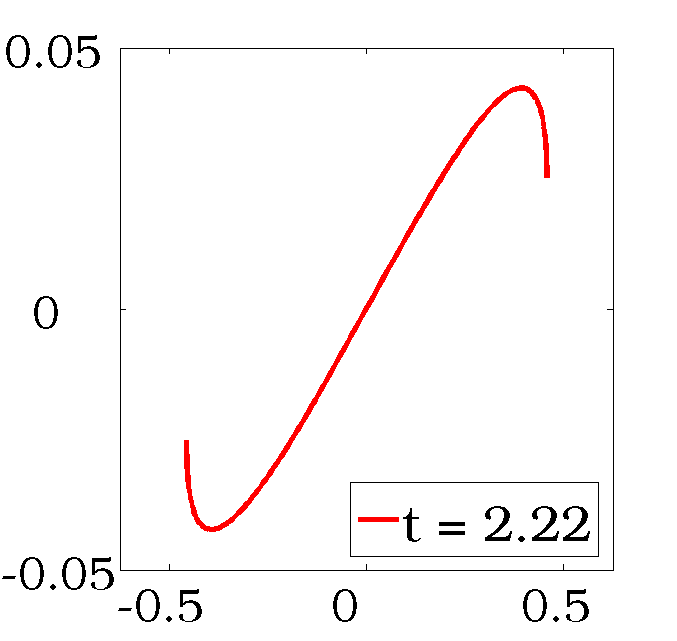}\\
        \rule{0ex}{0.23in}%
      }
  \rule{1.23in}{0ex}}

}\protect\caption{\label{fig:new_cs} Numerical simulations with CS nonlocal velocity alignment.- (a), (b): Time behavior of the density and the velocity for the case $M_0=1$ and $c=0.2$. (c), (d): Time behavior of the density and the velocity for the case $M_0=1$ and $c=0.2$.}
\end{figure}

In Fig. \ref{fig:new_cs}, two different simulations are depicted, and they differ in the initial velocity. The first one, Fig. \ref{fig:new_cs} (a), corresponds to $M_0=1$ and $c=0.2$, and we observe that there is no blow up in finite time reaching eventually the steady state. In contrast, the other simulation, increasing the value of the parameter c up to $0.5$, shows finite-time blow-up due to the infinite slope of the velocity on the boundary. The numerical time of blow up is $t=2.22$.

%
%
%
%

\subsubsection{Stronger repulsive potential than Newtonian}\label{sec_EPAL}

In this part, we deal with a potential more repulsive than Newtonian at the origin confined by the quadratic attractive potential given by
\begin{equation}\label{pot_nnp}
K(x) = -\log|x| + \frac{x^2}{2}.
\end{equation}
For simplicity, we again consider the linear damping in the momentum equation. Steady solutions for this problem corresponds again to find density profiles $\rho$ satisfying \eqref{eq:basic} together with $u=0$ on the support of the density $\rho$. They can be characterized as the global minimizer of certain energy functional and they can be computed explicitly, see \cite{CFP} and \cite{ST}. In fact, they are given by the semicircle law, that is, their graph is a semicircle.

We choose as initial density for the numerical simulations the positive part of a cosine function. For each node $i \in \{1,\cdots, N\}$, it is given by
\[
\rho_{i}(0)=\frac{1}{\gamma}\cos\left(\pi\,\frac{x_{i}(0)}{1.5}\right),
\]
where the constant $\gamma$ is computed so that the total mass $M_0 := \int_\R \rho_0\,dx$ has the required value. With respect to the initial velocity, it is chose to be
\[
u_{i}(0)=-c\text{\,}x_{i}(0)\quad \mbox{for } \:i=1,...,n,
\]
where the constant $c\in\mathbb{R}^{+}$ will be varied to study different
initial conditions in the simulations.

We first find a blow up estimate for the system \eqref{h_np} with the potential \eqref{pot_nnp}. By differentiating the momentum equation of system \eqref{h_np}, it is obtained that 
\begin{align}\label{gd_est}
\begin{aligned}
&\rho' = -\rho d,\cr
&d' = - \left(d^2 + d +  \int_{\Omega(t)} \frac{\rho(y)}{|x-y|^2}\,dy+ M_0\right) \leq - (d^2 + d + M_0)  ,
\end{aligned}
\end{align}
\noindent and for the case of $1 - 4M_0 \geq 0 $ it is found that 
\begin{align}\label{gd_est}
\begin{aligned}
&\rho' = -\rho d,\cr
&d' \leq -(d - d_-)(d - d_+),
\end{aligned}
\end{align}
where
\[
d_\pm := \frac{-1 \pm \sqrt{1 - 4M_0}}{2}.
\]

\begin{theorem}\label{thm_blowupnonnew}Let $(\rho,u)$ be a classical solution to the system \eqref{h_np} with the potential \eqref{pot_nnp} on the time interval $[0,T]$. Suppose that $1 - 4M_0 \geq 0$. Then the life span $T$ of the solution $(\rho,u)$ should be finite if there exists a x such that $\partial_x u_0(x)<\frac{-1 - \sqrt{1 - 4M_0}}{2}$. Moreover, 
\[
T \leq \frac{1}{d_- - d_0}.
\]
\end{theorem}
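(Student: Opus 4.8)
The plan is to follow the blueprint of Theorem~\ref{thm_blowupnew}, but now exploiting the differential \emph{inequality} for $d$ rather than an exact ODE, since the nonlocal Newtonian-type term $\int_{\Omega(t)} \rho(y)/|x-y|^2\,dy$ is only known to be nonnegative. The starting point is the reduced system \eqref{gd_est}: along the characteristic flow $\eta$ defined in \eqref{eq:char} we have $\rho' = -\rho d$ and $d' \le -(d-d_-)(d-d_+)$, where $d_\pm = \tfrac{-1\pm\sqrt{1-4M_0}}{2}$ are the two (real, by the hypothesis $1-4M_0\ge 0$) roots of the quadratic $d^2+d+M_0$. First I would fix a point $x$ with $d_0(x) = \partial_x u_0(x) < d_-$ and track $d(t)$ along the characteristic emanating from that $x$.

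The key observation is that the region $\{d < d_-\}$ is invariant under the inequality dynamics and is one where $d$ decreases at least as fast as a solution of the autonomous equation $\dot{D} = -(D-d_-)(D-d_+)$. Because $d < d_- < d_+$, both factors $(d-d_-)$ and $(d-d_+)$ are negative, so their product is positive and $d' \le -(d-d_-)(d-d_+) < 0$; moreover the right-hand side is bounded above by a negative quantity that grows quadratically as $d \to -\infty$. I would make this rigorous by a comparison argument: letting $D(t)$ solve the autonomous Riccati equation $\dot D = -(D-d_-)(D-d_+)$ with $D(0) = d_0(x) < d_-$, the standard ODE comparison principle gives $d(t) \le D(t)$ for as long as both exist. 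The explicit solution of the autonomous equation blows up to $-\infty$ in finite time, and since $d \le D$, the quantity $d$ must also reach $-\infty$ no later than $D$ does, forcing blow-up of $\partial_x u$ (and, through $\rho' = -\rho d$, divergence of $\rho$).

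To extract the explicit bound $T \le 1/(d_- - d_0)$, I would not even need the full solution of the quadratic Riccati equation; it suffices to use the cruder but cleaner estimate valid in the invariant region. For $d < d_-$ one has $d - d_+ = d - d_- + (d_- - d_+) \le d_- - d_+ = -\sqrt{1-4M_0} \le 0$, but the simplest route to the stated bound is to drop the already-negative factor appropriately: since $d-d_+ \le d_- - d_+$ is not directly what is wanted, I would instead bound $d' \le -(d-d_-)(d-d_+)$ and integrate by separation of variables, or more simply observe that near the onset one may compare with a single-factor quadratic. Concretely, set $w := d - d_-$, so $w_0 = d_0 - d_- < 0$ and $w' \le -w\,(d-d_+) $; on the invariant set $d-d_+ \ge d_- - d_+ >0$ fails in the wrong direction, so the clean estimate comes from noting $-(d-d_-)(d-d_+) \le -(d-d_-)^2$ precisely when $d - d_+ \ge d - d_-$, i.e.\ always, giving $w' \le -w^2$; integrating $w' \le -w^2$ from the initial negative value $w_0$ yields $w(t) \le w_0/(1 + w_0 t)$, which diverges to $-\infty$ at $t = -1/w_0 = 1/(d_- - d_0)$, exactly the claimed upper bound for the life span $T$.

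The main obstacle I anticipate is justifying the reduction \eqref{gd_est} rigorously and handling the nonlocal term correctly: one must verify that differentiating the momentum equation and passing to Lagrangian variables produces $d' = -(d^2 + d + \int_{\Omega(t)} \rho(y)/|x-y|^2\,dy + M_0)$, and that the integral term is genuinely nonnegative so that dropping it gives a valid one-sided inequality $d' \le -(d^2+d+M_0)$. The positivity of that integral is immediate since $\rho \ge 0$ and $|x-y|^{-2}>0$, so the inequality direction is correct; the subtler point is confirming that $M_0$ (the total mass) is the correct constant arising from the quadratic part $x^2/2$ of \eqref{pot_nnp} and that $\Delta_x(-\log|x|)$ contributes the local mass term consistently with the one-dimensional setting. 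Once the reduction and the invariance of $\{d<d_-\}$ are in hand, the comparison and the final integration are routine.
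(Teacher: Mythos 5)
Your proposal is correct and follows essentially the same route as the paper's proof: your invariance claim for the region $\{d < d_-\}$ is the paper's Step 1 (made rigorous there by a continuity argument on the first time $d$ reaches $d_-$), and your key estimate $d' \le -(d-d_-)^2$, integrated to force blow-up by time $1/(d_- - d_0)$, is exactly the paper's Step 2. One small slip worth fixing: your justification of $-(d-d_-)(d-d_+) \le -(d-d_-)^2$ via ``$d-d_+ \ge d-d_-$, i.e.\ always'' is backwards --- in fact $d-d_+ \le d-d_-$ holds since $d_+ \ge d_-$, and the product inequality then follows because multiplying by the \emph{negative} factor $d-d_-$ reverses the inequality --- so the estimate you use is valid and the argument goes through.
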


\begin{proof}We divide the proof into two steps.

{\bf Step 1.-} If $d_0 < d_-$, then $d(t) < d_-$ for $t \in [0,T]$. 
Set $$\mathcal{T}:= \lt\{t \in [0,T] : d(s) < d_- \mbox{ for } s \in [0,t] \rt\}.$$ Then $\mathcal{T}$ is not empty since $0 \in \mathcal{T}$. Furthermore if we set $\mathcal{T}^* = \sup \mathcal{T}$, then $\mathcal{T}^* > 0$ since $d(t)$ is continuous. Suppose that $\mathcal{T}^* < T$, then we get $\lim_{t \to \mathcal{T^*}-} d(t) = d_-$. On the other hand, it follows from \eqref{gd_est} that
\[
d' \leq -(d - d_-)(d - d_+) \quad \mbox{for} \quad t \in [0,\mathcal{T}^*),
\]
and this yields $d'(t) < 0$ for $t \in [0,\mathcal{T}^*)$ since $d(t) < d_- < d_+$ for $t \in [0,\mathcal{T}^*)$. This is a contradiction to
\[
d_- = \lim_{t \to \mathcal{T^*}-} d(t) \leq d_0 < d_-.
\]
Thus $\mathcal{T}^* \geq T$ and $d(t) < d_-$ for $t \in [0,T]$. 

{\bf Step 2.-} If $d_0 < d_-$, then the life-span of smooth solutions $T$ should be finite. \newline
Since $d(t) < d_- < d_+$ for $t \in [0,T]$, we get
\[
d' \leq -(d - d_-)(d - d_+) \leq -(d - d_-)^2, \quad \mbox{i.e.,} \quad (d - d_-)'\leq -(d - d_-)^2.
\]
This implies
\[
d(t) \leq \frac{1}{(d_0 - d_-)^{-1} + t} + d_-.
\]
Since $d_0 < d_-$, the right hand side of the above equality diverges to $- \infty$ when $t \rightarrow (d_- - d_0)^{-1}$. This concludes that the life-span $T$ should be less than $(d_- - d_0)^{-1}$.
\end{proof}

\begin{figure}
\subfloat[]{
\protect\includegraphics[scale=0.32]{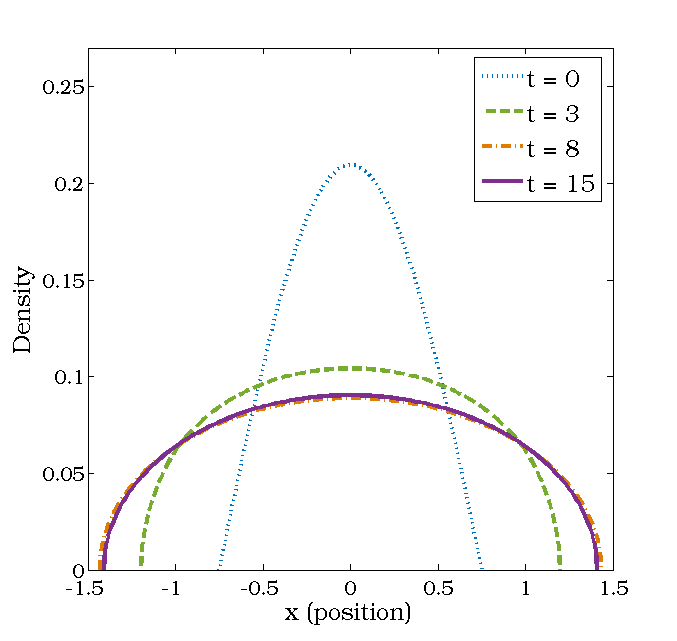}
\llap{\shortstack{%
        \includegraphics[scale=.085]{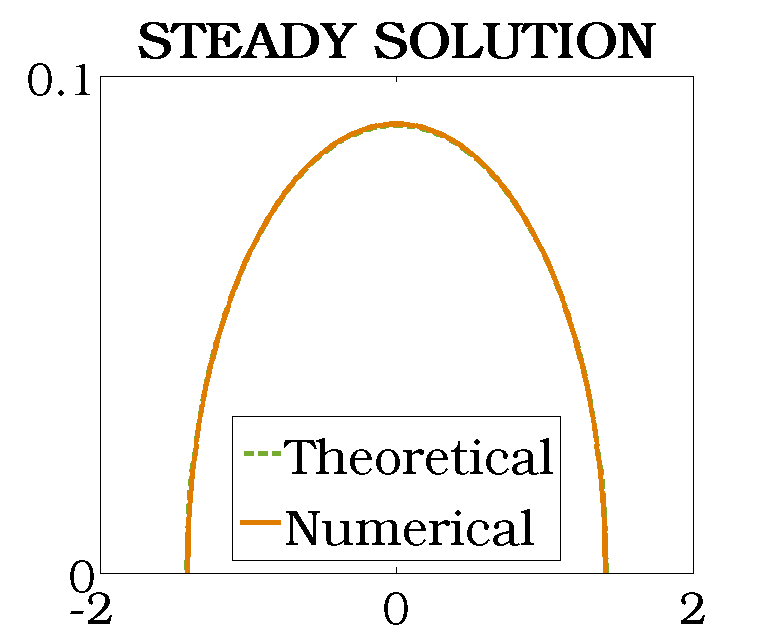}\\
        \rule{0ex}{1.32in}%
      }
  \rule{1.27in}{0ex}}
}\subfloat[]{\protect\includegraphics[scale=0.32]{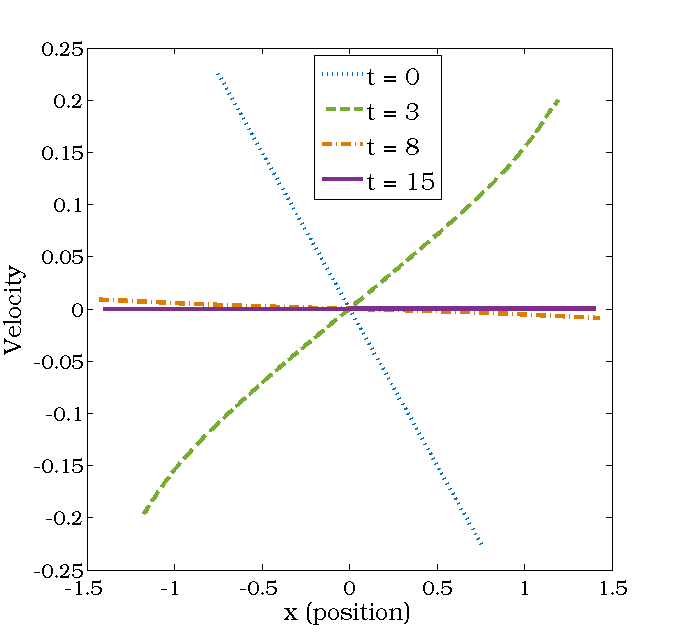}
}\newline
\subfloat[]{
\protect\includegraphics[scale=0.32]{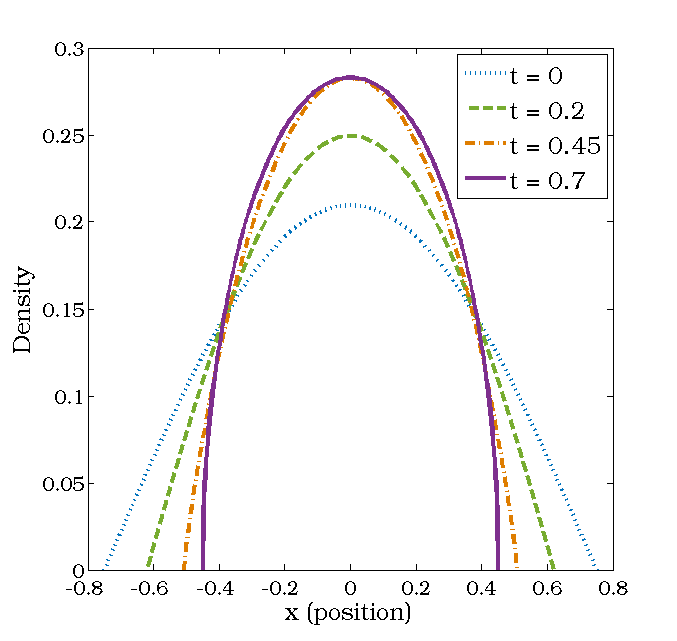}
}\subfloat[]{\protect\includegraphics[scale=0.32]{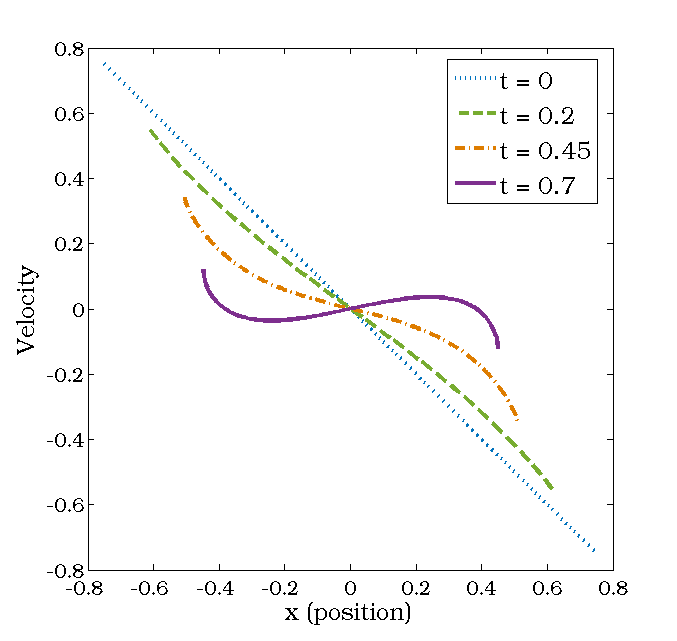}
\llap{\shortstack{%
        \includegraphics[scale=.1]{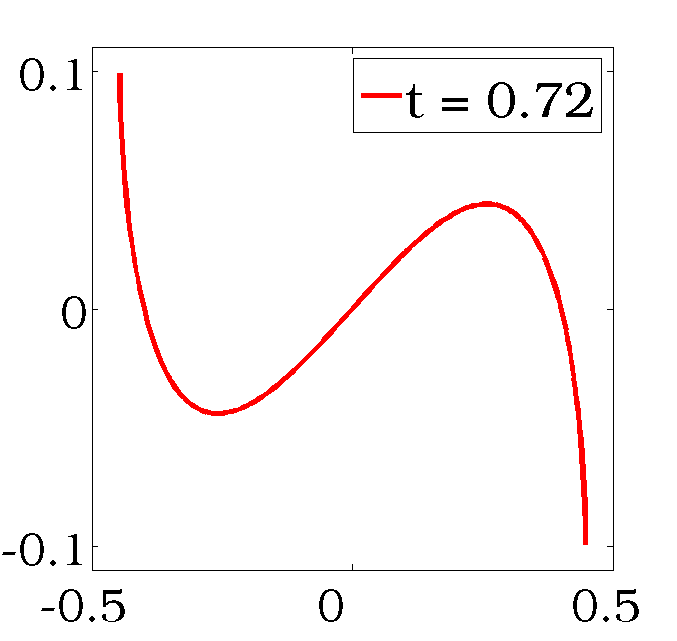}\\
        \rule{0ex}{0.23in}%
      }
  \rule{1.23in}{0ex}}
}\protect\caption{\label{fig:nonnew_blowup} Numerical simulations of the system \eqref{h_np} with potential \eqref{pot_nnp}.- (a), (b): Time behavior of the density and the velocity for the case $M_0=0.2$ and $c=0.3$. (c), (d): Time behavior of the density and the velocity for the case $M_0=0.2$ and $c=1$.}
\end{figure}

In Fig. \ref{fig:nonnew_blowup}, two numerical simulations are depicted with the objective of supporting Theorem \ref{thm_blowupnonnew}. The first case, with $M_0=0.2$ and $c=0.3$, the initial conditions of the blow up estimate of Theorem \ref{thm_blowupnonnew} are not satisfied. The numerical experiment seems to indicate that there exists a global-in-time solution converging toward the steady solution given by the semicircle law. We do not have sharp critical thresholds for this system so its behavior could not be predicted beforehand. On the other hand, in the second case it is set $M_0=0.2$ and $c=1$, and those initial conditions satisfy the blow up estimate of Theorem \ref{thm_blowupnonnew}. It can be observed in Fig. \ref{fig:nonnew_blowup} (f) that the finite-time blow-up is again produced by the infinite slope of the velocity at the boundary. Finding critical thresholds for this case is an open problem.

%

To conclude, a numerical study of the CS system \eqref{h_CS_P} with \eqref{pot_nnp} has been carried out. In fact, as above the total mass of the system and the initial conditions affect the global behavior of the solution leading to global existence of solutions converging to the semicircle law or finite time blow-up. The qualitative behavior is similar to the case of linear damping in case of initial data with zero mean velocity. Otherwise, the system can lead to travelling wave solutions with the same density profile.

\subsubsection{Euler-aligment system with attractive power-law potential and pressure}
In this part, we consider the pressure term $p$ in the Euler-alignment system with the attractive power-law potential $K(x) = x^2/2$ leading to the system
\begin{equation}\label{press}
\begin{cases}
\partial_{t}\rho+\pa_x \left(\rho u\right)=0,\quad x\in\R,\quad t>0,\\[2mm]
{\displaystyle \pa_{t}(\rho u)+ \pa_x(\rho u^2)+\pa_x p(\rho)= \rho\lt[\psi*(\rho u)-(\psi*\rho)u  - \pa_x K \star \rho\rt],}
\end{cases}
\end{equation}
where the pressure-law is given by $p(\rho) = \rho^2$.

For the numerical approximation, we will use a variation of the Lagragian scheme as in Section \ref{subsec:numerics}. Notice that we can not directly apply that scheme due to the presence of pressure $p$. In order to overcome this new difficulty, we take into account the approximated pressure term $\pa_x p_\epsilon(\rho) := 2\rho(\pa_x \delta_\epsilon  \star \rho)$, where $\delta_\epsilon$ is a mollification of the Dirac delta function $\delta_0$ given by a Gaussian
\[
\delta_\epsilon(x) = \frac{1}{\sqrt{2\pi\epsilon}}e^{\frac{-x^2}{2\epsilon}}.
\]
Note that $\delta_\epsilon$ converges weakly as mesures to $\delta_0$ as $\epsilon \to 0$, and subsequently, this implies 
$\pa_x p_\epsilon(\rho) = 2\rho(\delta_\epsilon \star \pa_x \rho) \to 2\rho\pa_x \rho = \pa_x p(\rho)$ as $\epsilon \to 0$, for smooth enough mass densities $\rho$. Using this approximation, we can rewrite the system $\eqref{press}_2$ as
\[
\pa_{t}(\rho u)+ \pa_x(\rho u^2) = \int_{\R}\psi(x-y)(u(y)-u(x))\rho(x)\rho(y)\,dy  - \rho \lt(\pa_x \tilde K \star \rho\rt),
\]
where the interaction potential $\tilde K$ is given by
\[
\tilde K(x) = -\frac{1}{\sqrt{2\pi \epsilon}}e^{\frac{-x^2}{2\epsilon}} + \frac{x^2}{2}.
\]
This enables us to use the previous Lagrangian scheme for the numerical simulations. It is worth mentioning that the parameter $\epsilon$ can not be too small for the numerical simulation and it should be chosen according to both the number of nodes and the distance between them. In our setting, we take the parameter $\epsilon = 10^{-4.1}$ for the numerical simulations.

Similarly as before, we consider the CS nonlocal velocity alignment force and the linear damping for the numerical simulations. We again remind the reader that if we choose the constant communication function $\psi \equiv 1$, then CS velocity alignment force becomes the linear damping by assuming that the initial momentum is zero, i.e., $\int_\R (\rho_0 u_0)(x)\,dx = 0$. 

Let us next investigate the steady solution for the system \eqref{press}. Let us first look for steady solutions of the form $\rho = \rho_\infty(x)$ and $u = u_\infty \equiv 0$. Since the initial momentum is zero, it is straightforward to check that the center of mass of the density $\rho$ is preserved on time. Let us assume without loss of generality that the center of mass is zero. Plugging $\rho_\infty$ and $u_\infty$ into \eqref{press}, we easily find 
\[
2\pa_x \rho_\infty(x) = -(x \star \rho_\infty)(x) = -x M_0\quad \mbox{on} \quad \mbox{supp}(\rho_\infty).
\]
This yields, by solving the ODE and fixing the mass to be $M_0$ with zero center of mass, that
\[
\rho_\infty (x) = \left\{ \begin{array}{ll}
\displaystyle -\frac{M_0}{4}\left(x+\sqrt[3]{3}\right)\left(x-\sqrt[3]{3}\right) \quad & \textrm{for} \quad x\in\left[-\sqrt[3]{3},\sqrt[3]{3}\right],\\[3mm]
 0 & \textrm{otherwise}.
  \end{array} \right.
\]

The initial density and velocity for the numerical simulations are defined as
\[
\rho_{i}(0)=\frac{1}{\gamma}\cos\left(\pi\,\frac{x_{i}(0)}{1.5}\right) \quad \mbox{and} \quad u_{i}(0)=-c\text{\,}\sin\left(\pi\,\frac{x_{i}(0)}{1.5}\right),
\]
for each node $i=1,\cdots,n$, where $\gamma$ is chosen so that the mass of the system is unit, and $c=0.2$. Actually, in our numerical experiments we add to the initial data the positive constant $\alpha=0.05$. The presence of vacuum areas lead to numerical artifacts at the boundary if we strictly impose zero value of the density. This is mainly due to the pressure term since the density is expected to become non-differentiable at the tip of the support, as for the steady state $\rho_\infty$. Therefore, we opt by adding this small constant to our initial data over the whole simulation interval. Furthermore, for the CS nonlocal velocity alignment force we have added a constant value to the initial velocity of $0.1$, in order to have a nonzero initial momentum.

\begin{figure}[ht!]
\subfloat[]{
\protect\includegraphics[scale=0.32]{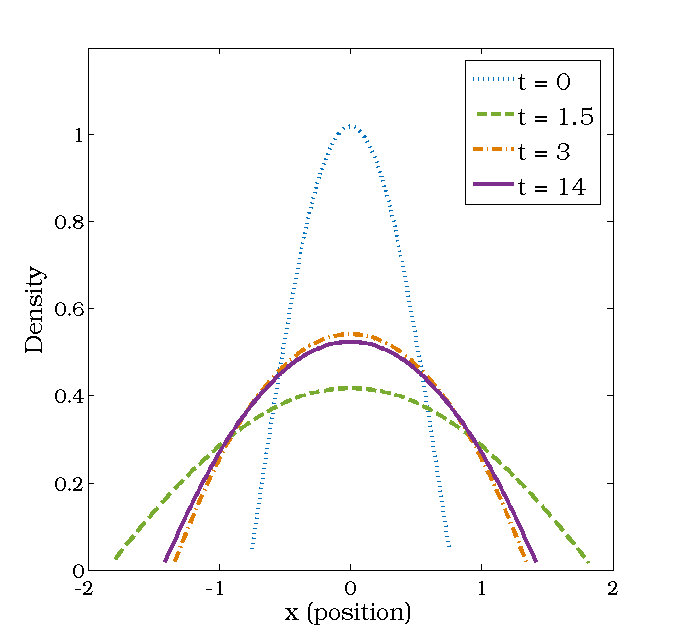}
\llap{\shortstack{%
        \includegraphics[scale=.093]{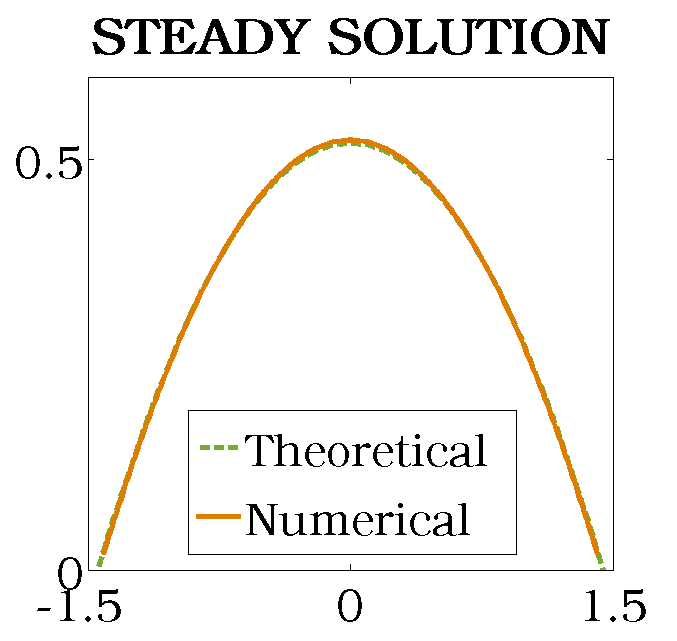}\\
        \rule{0ex}{1.28in}%
      }
  \rule{1.28in}{0ex}}
}\subfloat[]{\protect\includegraphics[scale=0.32]{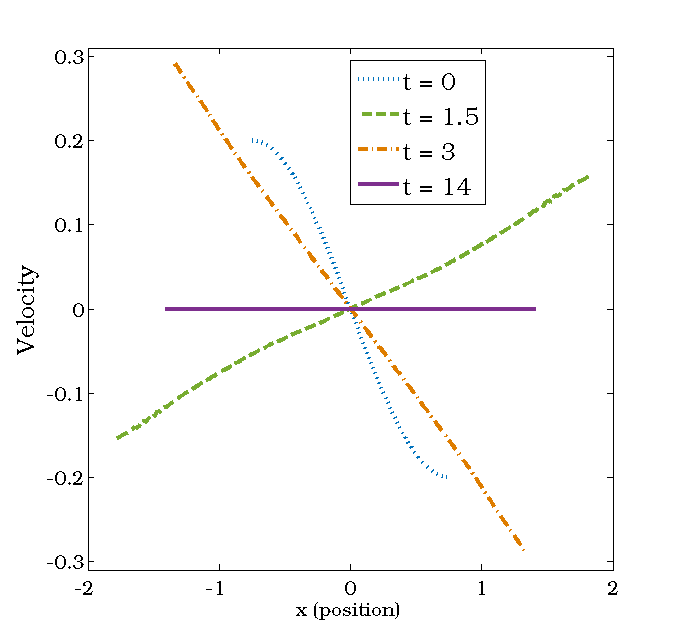}
}\newline
\subfloat[]{
\protect\includegraphics[scale=0.32]{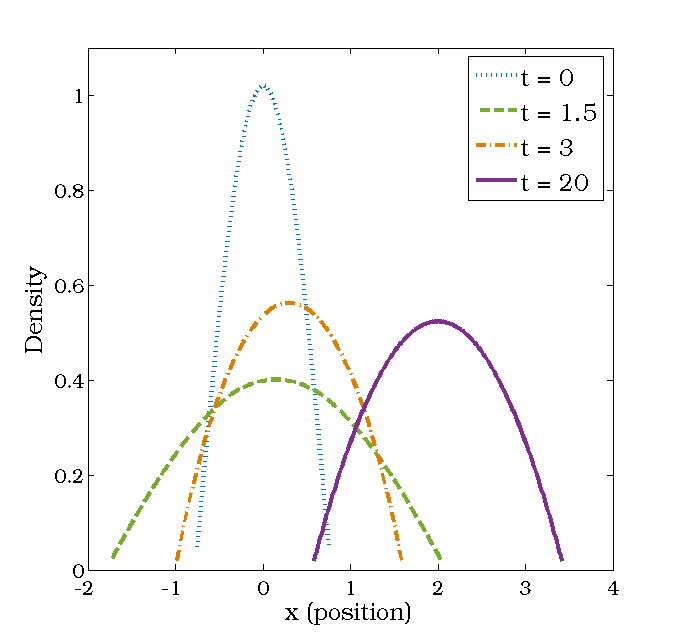}
}\subfloat[]{\protect\includegraphics[scale=0.32]{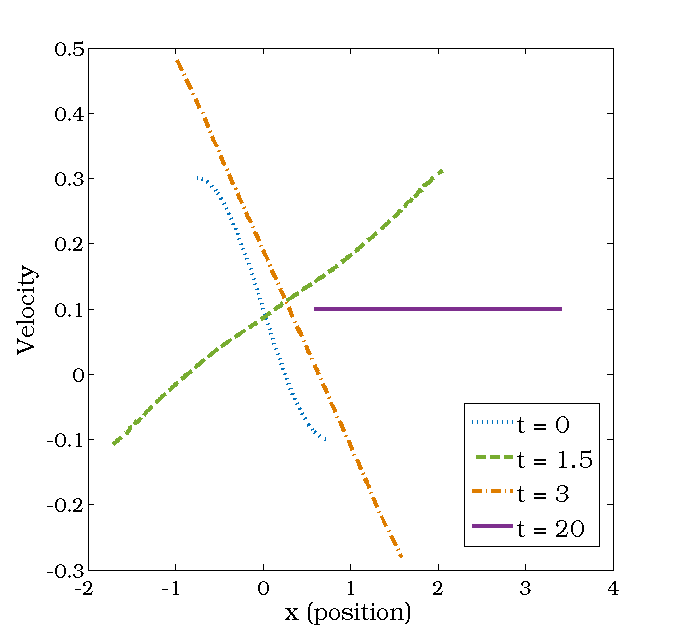}
}\protect\protect\caption{\label{fig:press}Numerical simulation of the density
and velocity with the linear damping (a), (b) and the CS velocity alignment (c), (d).}
\end{figure}

Fig. \ref{fig:press} shows the time evolutions of the density and the velocity for the approximated system with the linear damping (a), (b) and the CS nonlocal velocity alignment force (c), (d). We observe a convergence to the steady state for linear damping, while for the CS model a convergence towards a travelling wave profile due to the nonzero initial momentum. In both cases, we observe very fast convergences toward the steady state/travelling wave with time modulated decaying oscillations. Furthermore, it shows that the shape of the asymptotic density profiles are consistent with the theoretical results.

\section*{Acknowledgements}
J. A. C. was partially supported by the Royal Society via a Wolfson Research Merit Award. J. A. C. and Y.-P. C. were partially supported by EPSRC grant EP/K008404/1. Y.-P. C. was supported by the ERC-Stating grant HDSPCONTR ``High-Dimensional Sparse Optimal Control''. S.P. was partially supported by a Erasmus+ scholarship.

%
%
%

\end{document}